\def\ep{\epsilon}
\def\dt{\delta}
\def\lam{\lambda}
\def\la{\langle}
\def\ra{\rangle}
\def\nlam{\lambda_1,\cdots,\lambda_n}
\def\nmu{\mu_1,\cdots,\mu_n}
\def\nla{(\lam_1,\cdots,\lam_n)}
\def\nmu{(\mu_1,\cdots,\mu_n)}
\def\diag{{\rm diag}}
\def\sp{{\rm sp}}
\def\tr{{\rm tr}}
\def\conv{{\rm conv}}
\newcommand{\beq}{\begin{eqnarray*}}
\newcommand{\eneq}{\end{eqnarray*}}
\newcommand{\beqq}{\begin{eqnarray}}
\newcommand{\eneqq}{\end{eqnarray}}
\def\dd{\mathcal{D}}
\def\uu{\mathcal{U}}
\def\nnn{\mathcal{N}}
\def\cc{\mathbb{C}}
\def\rr{\mathbb{R}}
\newtheorem{lemma}{Lemma}[section]
\newtheorem{theorem}[lemma]{Theorem}
\newtheorem{corollary}[lemma]{Corollary}
\newtheorem{proposition}[lemma]{Proposition}
\newtheorem{definition}[lemma]{Definition}
\renewcommand{\phi}{\varphi}
\newcommand{\Z}{\mathbb{Z}}
\newcommand{\R}{\mathbb{R}}
\newcommand{\C}{\mathbb{C}}
\numberwithin{equation}{section}
\newcommand{\Aff}{\operatorname{Aff}}
\newcommand{\Tr}{\operatorname{Tr}}
\newcommand{\cpc}{completely positive contractive linear map}
\newcommand{\hm}{homomorphism}
\newcommand{\andeqn}{\,\,\,{\rm and}\,\,\,}
\newcommand{\rforal}{\,\,\,{\rm for\,\,\,all}\,\,\,}
\newcommand{\CA}{$C^*$-algebra}
\newcommand{\SCA}{$C^*$-subalgebra}
\newcommand{\af}{{\alpha}}
\newcommand{\wilog}{without loss of generality}
\newcommand{\Wlog}{Without loss of generality}
\newcommand{\tforal}{\,\,\,\text{for\,\,\,all}\,\,\,}
\newcommand{\tand}{\,\,\,\text{and}\,\,\,}
\begin{document}
\date{}
\title{ Convex hulls of unitary orbits of normal elements in $C^*$-algebras with tracial rank zero   }
\author{Shanwen Hu and  Huaxin Lin\\
 Research Center for Operator Algebras,\\
 School of Mathematical Sciences\\
 at East China Normal University,\\
 Shanghai Key Laboratory of PMMP\\
and \\
Department of Mathematics\\
University of Oregon\\
}

\maketitle

\begin{abstract} \noindent
Let $A$ be a unital  separable simple \CA\, with tracial rank zero and let
$x, \, y\in A$ be two normal elements.
We show that $x$ is in the closure of
the convex full of the unitary obit of $y$ if and only if
there exists a sequence of unital  completely positive  linear maps $\phi_n$ from
$A$ to $A$ such that the sequence {{$\phi_n(y)$ convergent  to $x$ in norm }}and also  approximately   preserves the trace values.
 A purely measure theoretical description for normal elements in 
the closure of convex hull of unitary orbit of $y$ is also given.
In the case that $A$ has a unique tracial state some classical results about the closure
of the convex hull of the unitary orbits  in von Neumann
algebras are proved to be hold in \CA s setting.

\vspace{1mm}

 \noindent{\it Key words}: unitary orbit, convex hull,
 $C^*$-algebra with tracial rank zero

\end{abstract}

\section{Introduction}

In  the algebra of $n$ by $n$ matrices over the complex field, two normal elements  are unitarily equivalent
if and only if they have the same eigenvalues counting multiplicities.  One can also describe
the convex hull of unitary orbit of  a given self-adjoint matrix using eigenvalue distribution. Indeed,
by a classical Horn's theorem (\cite{Hor}, see also \cite{TA1}), if $x$ and $y$
are two self-adjoint  matrices, then $x$ is in the convex hull of unitary orbit
of $y$ if  eigenvalues are majorized by those of $x.$

  In infinite dimensional
spaces, one studies the closure of convex hull of unitary orbit of an operator.  These results were
extended to von Neumann algebras (see, for example,
\cite{HN} and \cite{K1}).
Moreover,  in  \cite{H1} (and
\cite{AP}), the
closure of convex hull of unitary orbits
of normal elements in II$_1$ factors were also studied.
Since these description are closely related to the measure theory, via spectral theory,  one may expect that  the original
description in finite  matrix algebras carries out to  von Neumman algebras or at least
{II}$_1$ factors.

The situation is rather different in \CA s since
spectral theory  on longer holds.  However, more recently, in \cite{S}, \cite{PS} and \cite{PS2},
the closure of the convex hull of self-adjoint elements in unital simple \CA s with tracial rank zero
(or real rank zero and stable rank one, as well as other regularities) has been studied.
The current research was inspired by these  researches together with \cite{HL}.

One of the convenience of study  of self-adjoint elements  is that
the \SCA\, generated by a self-adjoint element has certain weak semi-projective property.
Moreover, the assumption that \CA s have real rank zero means that self-adjoint elements
can be approximated by those with finite spectrum.
These advantages disappear   when $x$ and $y$ are only assumed to be normal.

In the current paper,  we study the  normal elements in the closure of convex hull of unitary orbit of  normal elements in unital simple \CA s
with tracial rank zero.  The weak semi-projectivity property  can be partially recovered by the theorem
of \cite{FR}.
However, normal elements in general simple \CA s with tracial rank zero may not be approximated by
normal elements with finite spectrum.  Nevertheless, a theorem in \cite{LinFU} shows that the normal
elements in general simple \CA s with tracial rank zero can actually be approximated by those
with finite spectrum if  a $K_1$-related index vanish.  Moreover, unitary orbits of normal
elements in general simple \CA s with tracial rank zero were characterized in \cite{LinCHD}.
Using these results,   in this paper, we characterize  the normal elements in
the closure of convex hull of unitary orbits of normal elements  in  a general
simple \CA\, with tracial rank zero (see \ref{beta} below).
This keeps the same sprit of results in {II}$_1$-factors as in \cite{H1} and  \cite{AP} even though
the simple \CA\, $A$ may have rich tracial simplex.  On the other hand,
say, if we assume that $A$ also has a unique tracial state,
then a purely measure theoretical  description  of normal elements in
the closure of convex full of normal elements can be presented (see \ref{T5} below).
We also extend the result slightly beyond the case that $A$ has tracial rank zero (see \ref{Trr0}).

Suppose that $x$ is a normal element in the closure of convex hull of the unitary orbit
of  a normal element $y$ and $y$ is in the closure of convex hull of the unitary
orbit of $x.$ Then, in a {II}$_1$-factor $M,$ $x$ and $y$ are approximately unitarily equivalent
(see  Theorem 5.1  of \cite{AP}).
In a general  unital simple \CA\, $A$ with tracial rank zero, this  no longer holds simply because
the presence of non-trivial $K_1$ as well as infinitesimal elements in $K_0(A).$
However, when these $K$-theoretical obstacles disappear, we show
that these two notions still coincide in simple \CA\, of tracial rank zero.
In particular, we show that, in a unital simple AF-algebra $A$ { {with  a unique trace}} if both ${\rm sp}(x)$ and
${\rm sp}(y)$ are connected, and $x$ is in the closure of convex hull of the unitary orbit
of  a normal element $y$ and $y$ is in the closure of convex hull of the unitary
orbit of $x,$ then $x$ and $y$ are approximately unitarily equivalent.

{\bf Acknowledgements}
Much of this research work was done when both authors were in the Research Center of
Operator Algebras at East China Normal University which is partially supported  by Shanghai Key Laboratory of PMMP,  The
Science and Technology Commission of Shanghai Municipality (STCSM), grant \#13dz2260400
  and by  a NNSF grant  (11531003 ).  During the research, the second named author was also supported by
  a NSF grant (DMS 1665183).

\section{Notations}

Let $A$
 be a unital $C^*$-algebra,
We will use the following convention:

(1) $U(A)$ is the  unitary group of $A.$

(2) $\nnn(A)$ is the set of all normal elements of $A,$  $A_{s.a}$ is the set of
all self-adjoint elements of $A$ and $A_+$ is the set of positive elements of $A.$

(3) $\nnn_0(A)=\{x\in\nnn(A):\forall\lam\not\in\sp(x),[\lam-x]=0\,\,{\rm in}\,\,K_1(A)\}.$

(4) For any  $a\in A$, $\uu(a)=\{u^*au:u\in U(A)\}$ is the unitary orbit of $a.$

(5) For any  $a\in A$,  $\conv(\uu(a))$ is the convex hull of the unitary orbit $\uu(a).$

(6) If $p$ is a projection of $A$, $a\in pAp$, $\conv(\uu_p(a))$ is the convex hull of unitary orbit in $pAp$.

(7) $T(A)$ is the set of all tracial states.
If $\tau\in T(A),$ then $\tau\otimes \tr$ is a tracial state of $M_n(A),$ where
 $\tr$ is the tracial state of $M_n(\C).$ We shall continue to use $\tau$ for $\tau\otimes \tr.$

(8) Let  $a,\, b\in A$ and let $\ep>0.$  Let us write $a\approx_{\ep} b$ if $\|a-b\|<\ep.$ Suppose that
$S\subset A$ is  a subset. Let us write $a\in_{\ep} S$ if $\inf\{\|a-s\|: s\in S\}<\ep.$
We may write $a\in_{\ep'} S$ including the case $\ep'=0$ which we mean
that $a\in S.$

(9) Denote by $GL(A)$ the set of invertible elements.
 Recall that   $A$ has stable rank one, if $GL(A)$ is dense in $A.$

(10) Let $p, q\in A$ be  two projections,
We write $[p]=[q]$ if
there exists a  $v\in A$ such that
$v^*v=p$ and $vv^*=q.$ We write $[p]\le [q].$ If $[p]=[q']$ for some projection $q'\le q.$

We write $[p]\le_u [q],$ if there exists a unitary $u\in A$ such that
$u^*pu\le q,$ and $[p]=_u[q],$ if $u^*pu=q.$

If $A$ has stable rank one, then $[p]\le [q]$ is the same as $[p]\le_u [q]$ and
$[p]=[q]$ is the same as $[p]=_u[q].$ Note, almost all the cases in this paper,
$A$ has stable rank one.

(11) Let $x, y\in A_+$ be positive elements.
We write $x\lesssim y,$ if there exists $r_n\in A$ such that
$\lim_{n\to\infty}\|r_n^*yr_n- x\|=0.$
In case $x$ and $y$ are projections, then there exists a partial isometry $v\in A$
such that $v^*v=x$ and $vv^*\le y.$
If $A$ has stable rank one and $x\lesssim y,$ then there exists $z\in A$ such
that $z^*z=x$ and $zz^*\in \overline{yAy}.$

Let $K\ge 1$ be an integer. We write $K\la x\ra \le \la y\ra,$
if there are $K$ mutually orthogonal positive elements $x_1, x_2, ...,x_n\in M_m(A)$
for some $m\ge 1,$ $x_1+x_2+\cdots +x_n\lesssim y$ in
$M_m(A)$ and $x_i\lesssim x$ and $x\lesssim x_i,$ $i=1,2,...,n.$

If $p\in A$ is a projection and $p\lesssim x,$ then there exists partial isometry $v\in A$ such
that $v^*v=p$ and $vv^*\in \overline{xAx}.$

(12) A linear map $\phi: A\to A$ is said to trace preserving if
$\tau\circ \phi=\tau$ for all $\tau\in T(A).$

(13)  Let ${\cal F}\subset A$ be a finite subset and $\ep>0.$
Suppose that $B$ is another \CA. A positive linear map $L: A\to B$ is  said to be
${\cal F}$-$\ep$-multiplicative if $\|L(xy)-L(x)L(y)\|<\ep$ for all $x, y\in {\cal F}.$

\section{Preliminaries}

The following lemma is well-known.
\begin{lemma}\label{lem00}

Suppose $\{a,b,a_i:i=1,...,k\}\subset A$, then
 $a\in_{\ep_1}\conv(\uu(b))$ and $b\in_{\ep_2}\conv(\uu(c))$ imply $$a\in_{\ep_1+\ep_2}\conv(\uu(c)).$$

\end{lemma}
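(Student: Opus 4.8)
The plan is to unwind the definition of the $\ep$-approximate membership symbol $\in_\ep$ and to exploit the elementary fact that the convex hull of a unitary orbit is invariant under the unitary conjugation, together with the triangle inequality. Recall from convention (8) that $a\in_{\ep_1}\conv(\uu(b))$ means $\inf\{\|a-s\|:s\in\conv(\uu(b))\}<\ep_1$, so there exists an element $s\in\conv(\uu(b))$ with $\|a-s\|<\ep_1$. Writing $s=\sum_{i=1}^N t_i\, u_i^*bu_i$ for finitely many unitaries $u_i\in U(A)$ and scalars $t_i\ge 0$ with $\sum_i t_i=1$, we have exhibited $a$ as being $\ep_1$-close to an explicit convex combination of unitary conjugates of $b$.

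The key step is to replace each occurrence of $b$ inside $s$ by a nearby convex combination of unitary conjugates of $c$. Since $b\in_{\ep_2}\conv(\uu(c))$, choose $w\in\conv(\uu(c))$, say $w=\sum_{j=1}^M r_j\, v_j^*cv_j$, with $\|b-w\|<\ep_2$. Then I would form the element $s'=\sum_{i=1}^N t_i\, u_i^* w\, u_i$. First, $s'$ again lies in $\conv(\uu(c))$: expanding, $s'=\sum_{i,j} t_i r_j\,(v_j u_i)^* c\,(v_j u_i)$ is a convex combination (the coefficients $t_i r_j$ are nonnegative and sum to $1$) of unitary conjugates of $c$, because each $v_j u_i$ is again a unitary. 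Second, I estimate
$$\|s-s'\|=\Bigl\|\sum_{i=1}^N t_i\, u_i^*(b-w)u_i\Bigr\|\le \sum_{i=1}^N t_i\,\|u_i^*(b-w)u_i\|=\sum_{i=1}^N t_i\,\|b-w\|<\ep_2,$$
using the triangle inequality, the fact that conjugation by a unitary is isometric, and $\sum_i t_i=1$.

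Finally, combining the two estimates via the triangle inequality gives $\|a-s'\|\le\|a-s\|+\|s-s'\|<\ep_1+\ep_2$, and since $s'\in\conv(\uu(c))$ this shows $a\in_{\ep_1+\ep_2}\conv(\uu(c))$, as desired. I do not expect any genuine obstacle here; the statement is a transitivity (or triangle-inequality) property for the approximate-containment relation, and the only point requiring a moment's care is verifying that conjugating a convex combination of unitary orbits by a fixed unitary stays inside the convex hull of the unitary orbit, which is immediate since the unitaries form a group. The auxiliary elements $a_i$ listed in the hypothesis play no role and may simply be ignored.
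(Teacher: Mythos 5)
Your proposal is correct and follows essentially the same route as the paper: both arguments substitute the convex combination $w=\sum_j r_j v_j^*cv_j$ for $b$ inside $s=\sum_i t_iu_i^*bu_i$, observe that the resulting element $\sum_{i,j}t_ir_j(v_ju_i)^*c(v_ju_i)$ lies in $\conv(\uu(c))$, and conclude by the triangle inequality together with the fact that unitary conjugation is isometric. No discrepancies.
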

\begin{proof}
 There are
 $
\{u_i,v_j:i=1,...,m;j=1,...,n\}\subset U(A)
$
and
$\{t_i,s_j:i=1,...,m;j=1,...,n\}\subset (0,1)$
 with
$$
\sum_{i=1}^mt_i=1\,\,\,{\rm and}\,\,\, \sum_{j=1}^ns_j=1$$
satisfying  $$a\approx_{\ep_1}\sum_{i=1}^mt_iu^*_ibu_i,
\,\,\,\,{\rm and}\,\,\,\,b\approx_{\ep_2}\sum_{j=1}^ns_jv^*_jcv_j.$$
Let
$$
a'=\sum_{i=1}^m\sum_{j=1}^nt_is_ju^*_iv^*_jcv_ju_i,
$$
then $a'\in\conv(\uu(c))$ and
$$
a'\approx_{\ep_2}\sum_{i=1}^mt_iu^*_ibu_i\approx_{\ep_1}a.
$$

\end{proof}

\begin{definition}\label{def}
Recall that $D\in M_n(\cc)$ is called doubly stochastic matrix if $D=(d_{ij})$ with $d_{ij}\in [0,1]$ with $\sum_i^nd_{ij}=\sum_{j=1}^nd_{ij}=1$ for all $i,j$.
Denote by  $\dd_n$ the
set of all  doubly stochastic matrices in $M_n(\cc).$
\end{definition}

\begin{definition}\label{def1}

For any $x=(\nlam),y=\nmu\in\cc^n$, we write
$x\prec y$
if there is $D=(d_{i,j})\in\dd_n$ such that
\beqq\label{upt}
\begin{pmatrix}
 \lam_1\\\vdots\\\lam_n
\end{pmatrix}
=\left(\begin{array}{ccc}
 d_{11}&\cdots&d_{1n}\\
 \vdots&&\vdots\\
 d_{n1}&\cdots&d_{nn}
\end{array}\right)\begin{pmatrix}
 \mu_1\\\vdots\\\mu_n
\end{pmatrix}.
\eneqq
\end{definition}
We may also  write  $$\nla^T=D\nmu^T$$
instead of \eqref{upt}.
\vspace{0.1in}

The following is a variation of Horn's theorem (see also \cite{TA1}).

\begin{lemma}\label{lemA}
Let  $\nla,\nmu\in\cc^n.$  Then  the following conditions are equivalent:

{\rm (1)} $\nla\prec \nmu$;

{\rm (2)} $\diag\nla\in\conv(\uu(\diag\nmu))$ in $M_n(\cc)$.

{\rm (3)} There is a unital  trace preserving completely positive linear mapping $\Phi$ on $M_n(\cc)$
such that $\Phi(\diag\nmu)=\diag\nla.$

{\rm (4)} There is a unital completely positive linear mapping $\phi$ on
$\cc^n$ such that $\phi\nmu=\nla$ and $\tau\circ d_n(\phi(a))=\tau\circ d_n(a)$
for all $a\in \C^n,$ where $d_n: \C^n\to M_n(\C)$ is defined by
$d_n(\af_1,\af_2,...,\af_n)=\diag(\af_1,\af_2,...,\af_n)$ for all $(\af_1, \af_2,...,\af_n)\in \C^n,$
and $\tau$ is the unique tracial state on $M_n(\C).$
\end{lemma}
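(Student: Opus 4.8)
The plan is to prove the cyclic chain of implications $(1)\Rightarrow(2)\Rightarrow(3)\Rightarrow(4)\Rightarrow(1)$, which is the most economical route since each condition feeds naturally into the next. The relation $\nla\prec\nmu$ of \ref{def1} is phrased in terms of doubly stochastic matrices, conditions (2) and (3) live in $M_n(\cc)$, and condition (4) lives in the abelian algebra $\cc^n$; the two ``end'' implications are where the translation between the combinatorial and the operator-algebraic pictures actually takes place.

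For $(1)\Rightarrow(2)$ I would invoke the classical Birkhoff--von Neumann theorem to write the doubly stochastic matrix $D$ as a convex combination $D=\sum_k t_k P_k$ of permutation matrices, with $t_k>0$ and $\sum_k t_k=1$. Each $P_k$, with underlying permutation $\sigma_k$, is realized by a permutation unitary $U_k\in M_n(\cc)$ for which conjugation $x\mapsto U_k^*xU_k$ permutes the diagonal entries of $\diag\nmu$ according to $\sigma_k$. Comparing the $i$-th diagonal entry of $\sum_k t_k U_k^*\diag\nmu\,U_k$ with $\lam_i=\sum_k t_k\mu_{\sigma_k(i)}$ then gives $\sum_k t_k U_k^*\diag\nmu\,U_k=\diag\nla$, so $\diag\nla\in\conv(\uu(\diag\nmu))$. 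The step $(2)\Rightarrow(3)$ is then immediate: writing $\diag\nla=\sum_k t_k U_k^*\diag\nmu\,U_k$ for some unitaries $U_k$ and weights $t_k$, the map $\Phi(x)=\sum_k t_k U_k^*xU_k$ is a convex combination of inner automorphisms, hence a unital trace preserving completely positive map on $M_n(\cc)$ sending $\diag\nmu$ to $\diag\nla$.

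For $(3)\Rightarrow(4)$ I would compose $\Phi$ with the diagonal pinching. Let $E\colon M_n(\cc)\to M_n(\cc)$ be the (trace preserving) conditional expectation onto the diagonal subalgebra $d_n(\cc^n)$, and set $\phi=d_n^{-1}\circ E\circ\Phi\circ d_n$. This $\phi$ is unital and completely positive as a composition of such maps; it satisfies $\phi\nmu=\nla$ because $E$ fixes the already-diagonal element $\diag\nla$; and the trace condition holds since $E$ and $\Phi$ both preserve $\tau$, whence $\tau\circ d_n(\phi(a))=\tau(\Phi(d_n(a)))=\tau(d_n(a))$ for all $a\in\cc^n$.

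The implication $(4)\Rightarrow(1)$ carries the real content. Since $\cc^n$ is abelian, a unital completely positive map on it is just a unital positive map, so evaluating $\phi$ on the minimal projections $e_1,\dots,e_n$ gives $\phi(e_j)=\sum_i s_{ij}e_i$ with all $s_{ij}\ge0$, and hence $\lam_i=\sum_j s_{ij}\mu_j$, i.e. $\nla^T=S\nmu^T$ with $S=(s_{ij})$. Unitality of $\phi$ forces each row sum $\sum_j s_{ij}=1$, while applying the trace condition to $a=e_j$ and using $\tau(d_n(e_j))=1/n$ forces each column sum $\sum_i s_{ij}=1$; thus $S$ is doubly stochastic and $\nla\prec\nmu$. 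I expect this last step---that the trace-preservation hypothesis is precisely what upgrades the row-stochastic matrix coming from unitality into a doubly stochastic one---to be the conceptual crux, together with keeping the permutation conventions straight in $(1)\Rightarrow(2)$.
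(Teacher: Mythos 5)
Your proposal is correct, and for the implications $(1)\Rightarrow(2)\Rightarrow(3)\Rightarrow(4)$ it coincides with the paper's argument: Birkhoff--von Neumann plus permutation unitaries, then averaging over the unitaries to get $\Phi$, then compressing to the diagonal to get $\phi$. The only genuine divergence is in $(4)\Rightarrow(1)$, where your route is more direct than the paper's. You read the matrix straight off the minimal projections, writing $\phi(e_j)=\sum_i s_{ij}e_i$ with $s_{ij}\ge 0$ by positivity, and then observe that unitality gives the row sums and the trace condition applied to each $e_j$ gives the column sums; this is a complete and correct argument, since condition (4) supplies the trace identity for \emph{every} $a\in\cc^n$. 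The paper instead partitions $\{1,\dots,n\}$ into the level sets $S_k$ of $\nmu$, works with the spectral projections $Q_k=\sum_{i\in S_k}e_i$ of $\diag\nmu$, and defines $d_{ij}$ by averaging $\Tr(e_j\tilde\phi(Q_k)e_j)$ over $|S_k|$. The payoff of that more elaborate construction is that it only consumes the trace condition on the finite-dimensional subalgebra generated by the $Q_k$, which is exactly the approximate hypothesis available later in Lemma \ref{Ln43}; your version uses the full strength of (4), which is legitimate for the lemma as stated but would need the level-set averaging to adapt to the approximate setting. Both arguments are valid proofs of the statement at hand.
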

\begin{proof}
$(1)\Rightarrow (2)$: If $\nla\prec\nmu$, then there is $D=(d_{ij})\in \dd_n$ such that  $\nla^T=D\nmu^T.$ By Birkhoff's Theorem \cite{GB},
$$
D=\sum_{\sigma\in\Sigma_n}t_\sigma v_\sigma,
$$
where $\Sigma_n$ is the permutation group of $\{1,...,n\}$, $t_\sigma\in [0,1]$ with $\sum_{\sigma\in\Sigma_n}t_\sigma=1$, and $v_\sigma$ is the permutation on $\cc^n$.
One  may  check that
$$\diag\nla=\sum_{\sigma\in\Sigma_n}t_\sigma u^*_\sigma(\diag\nmu)u_\sigma,$$ where $u_\sigma$ is the unitary of $M_n(\cc)$  induced by $v_\sigma$.
That is, viewing element $u_\sigma$ as a linear operator on $\cc^n,$ for any $\nla\in \cc^n$, $u_\sigma(\nla)=(\lam_{\sigma_1},...,\lam_{\sigma_n})$.

$(2)\Rightarrow (3)$: If $x=\sum_{i=1}^Nt_iu^*_iyu_i$,
define $\Phi(z)=\sum_{i=1}^Nt_iu^*_i(z)u_i$ for any $z\in M_n(\cc)$.
Then $\Phi$ is a trace preserving \cpc.

$(3)\Rightarrow (4)$:
Let $E$ be the map from $M_n(\cc)$ to $\C^n$ be defined by
$E(a_{ij})=(a_{11},a_{22},...,a_{nn})$ for all $a=(a_{ij})\in M_n(\cc).$
For any $z=(\eta_1,...,\eta_n)\in\cc^n$, define
$$\phi(z)=E(\Phi(z))\in   \C^n.$$
Then one checks that $\phi$ meets the requirements.

$(4)\Rightarrow (1)$:
Fix
any $\nmu\in\cc^n.$
One divides $\{1,2,...,n\}$ into
$N$ disjoint subsets
$S_k\subset \{1,2,...,n\},k=1,2,...,N$
such that
 $\sqcup S_k=\{1,2,...,n\}$
  and $i\in S_k,$ if and only if $\mu_i=\mu_k.$

  Denote by $e_i=\diag(\overbrace{0,...,0}^{i-1}, 1, 0,...,0),$ $i=1,2,...,n.$
  Set
  $Q_k=\sum_{i\in S_k}e_i,(k=1,2,...,N).$
  and
  $C_d=C^*(Q_1,...,Q_N)$ in $\cc^n$.
  We now view ${\tilde \phi}: d_n(\C^n)\to  d_n(\C^n)$ as
  ${\tilde \phi}(\diag(\af_1, \af_2, ...,\af_n))=d_n(\phi(\af_1,\af_2,...,\af_n))$
  for all $(\af_1, \af_2,...,\af_n)\in \C^n.$

For any $i\in S_k,$ ($k=1,2,...,N$), define
$$
d_{ij}=\frac{\Tr(e_j{\tilde \phi}(Q_k)e_j))}{|S_k|},\,\,j=1,2,...,n,$$
where $\Tr$ is the non-normalized trace on $M_n(\C).$
Then $(d_{ij})\in\dd_n$. In fact,
\beq
&&\sum_{i=1}^nd_{ij}=\sum_{k=1}^N\sum_{i\in S_k}\frac{\Tr(e_j{\tilde \phi}(Q_k))}{|S_k|}
=\Tr(e_j{\tilde\phi}(1_{\cc^n}))=1,j=1,2,...,n;\\
&&\sum_{j=1}^nd_{ij}
=\sum_{j=1}^n\frac{\Tr(e_j{\tilde\phi}(Q_k))}{|S_k|}
=\frac{\Tr({\tilde \phi}(Q_k))}{|S_k|}=1,i\in S_k,k=1,2,..,N.
\eneq
Since $\phi(y)=x, $  $\nla^T=D\nmu^T.$ 

\end{proof}

\begin{corollary}\label{corB} Suppose $A$ is a $C^*$-algebra.
If $\{p_i:i=1,...,n\}$ and $\{q_i:i=1,...,n\}$ are $n$-tuples of mutually orthogonal projections in $A$ with
$[p_i]=_u[q_j]$ for any $i,j$, and if
$$
x=\sum_{i=1}^{n}\lambda_{i}p_{i},\quad\quad y=\sum_{i=1}^{n}\mu_{i}q_{i},
$$
then

(1) $x\in\conv(\uu(y))$ if and only if $\nla\prec \nmu$.

(2) Let $B=\{\sum_{i=1}^n\lam_ip_i:\lam\in\cc\}$ and $p=\sum_{i=1}^np_i$.
If there is $D\in\dd_n$ such that
$\nla^T=D\nmu,$ then
 there is a unital trace preserving completely positive linear map $\phi_D$ on $pAp$  which maps
$B$ into $B$ such that

$$
\phi_D(x)=\sum_{i=1}^{n}\mu_{i}p_{i}.
$$

\end{corollary}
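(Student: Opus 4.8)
The plan is to push everything down to the matrix algebra $M_n(\cc)$ using the system of matrix units attached to the $p_i$, and then quote \ref{lemA}. Since $[p_i]=_u[p_j]$ for all $i,j$, choose partial isometries $v_{ij}\in A$ with $v_{ij}^*v_{ij}=p_j$, $v_{ij}v_{ij}^*=p_i$ and $v_{ij}v_{kl}=\delta_{jk}v_{il}$; these span a copy $M\cong M_n(\cc)$ inside $pAp$ (with unit $p=\sum_ip_i$) in which $x=\diag\nla$ and $\sum_i\mu_ip_i=\diag\nmu$. For $\sigma\in\Sigma_n$ let $V_\sigma=\sum_iv_{\sigma(i)i}\in M$ be the associated permutation unitary and set $\widetilde V_\sigma=V_\sigma+(\one-p)\in U(A)$; conjugation by $\widetilde V_\sigma$ fixes $\one-p$, permutes $\{p_i\}$, hence preserves $B$ and sends $\sum_i\mu_ip_i$ to $\sum_i\mu_{\sigma(i)}p_i$. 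For (2), Birkhoff's theorem writes $D=\sum_{\sigma}t_\sigma P_\sigma$ with $t_\sigma\ge0$, $\sum_\sigma t_\sigma=1$, and I define $\phi_D\colon pAp\to pAp$ by $\phi_D(z)=\sum_\sigma t_\sigma V_\sigma^*zV_\sigma$. This is unital and completely positive; it is trace preserving since $\tau(V_\sigma^*zV_\sigma)=\tau(z)$ for every $\tau\in T(A)$; it maps $B$ into $B$ since each $\mathrm{Re}$-free conjugation $\mathrm{Ad}\,V_\sigma^*$ permutes the $p_i$; and, reading $\lambda_i=\sum_jd_{ij}\mu_j=\sum_\sigma t_\sigma\mu_{\sigma(i)}$ off Birkhoff's decomposition, $\phi_D(\sum_i\mu_ip_i)=\sum_i(\sum_\sigma t_\sigma\mu_{\sigma(i)})p_i=\sum_i\lambda_ip_i=x$, i.e.\ $\phi_D$ realizes the doubly stochastic action $\nmu\mapsto\nla$ on $B$, as required in (2).

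For the forward implication of (1), suppose $\nla\prec\nmu$. By \ref{lemA} $(1)\Rightarrow(2)$ and Birkhoff, $\diag\nla=\sum_\sigma t_\sigma V_\sigma^*(\diag\nmu)V_\sigma$ inside $M$, so extending the unitaries by $\one-p$ gives $x=\sum_\sigma t_\sigma\widetilde V_\sigma^*\big(\sum_i\mu_ip_i\big)\widetilde V_\sigma$, whence $x\in\conv(\uu(\sum_i\mu_ip_i))$ in $A$. It remains to pass from $\sum_i\mu_ip_i$ to $y=\sum_i\mu_iq_i$. The hypothesis $[p_i]=_u[q_j]$ supplies, for each $i$, a partial isometry $w_i$ with $w_i^*w_i=p_i$ and $w_iw_i^*=q_i$; orthogonality makes $w=\sum_iw_i$ satisfy $w^*w=p$, $ww^*=q$ and $wp_iw^*=q_i$, where $q=\sum_iq_i$. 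If $w$ extends to a unitary $u\in U(A)$ with $u^*q_iu=p_i$ for all $i$, then $u^*yu=\sum_i\mu_ip_i\in\uu(y)$, and \ref{lem00} (with $\ep_1=\ep_2=0$) yields $x\in\conv(\uu(y))$.

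The main obstacle is exactly this extension of $w$ to an honest unitary, and it is the one place where the full unitary equivalence $[p_i]=_u[q_j]$ (as opposed to mere Murray--von Neumann equivalence) is needed. The extension exists as soon as the complementary projections satisfy $\one-p\sim\one-q$, in which case $u=w+w_0$ for any partial isometry $w_0$ with $w_0^*w_0=\one-p$ and $w_0w_0^*=\one-q$. I expect to obtain $\one-p\sim\one-q$ from $p\sim q$ by cancellation; this is available in the algebras of interest here, where $A$ has stable rank one, which is why I regard this (rather than any of the computations above) as the genuine technical content of the forward direction.

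The reverse implication of (1) needs no such unitary and follows from traces. Assume $x\in\conv(\uu(y))$, say $x=\sum_kt_ku_k^*yu_k$, and fix $\tau\in T(A)$; since all the $p_i,q_j$ are unitarily equivalent, $\tau(p_i)=\tau(q_j)=:s$. For $\theta\in\cc$ apply the real-linear map $\mathrm{Re}(\bar\theta\,\cdot\,)$ to this convex combination to get $h:=\mathrm{Re}(\bar\theta x)=\sum_i\mathrm{Re}(\bar\theta\lambda_i)p_i\in\conv(\uu(g))$, where $g:=\mathrm{Re}(\bar\theta y)=\sum_i\mathrm{Re}(\bar\theta\mu_i)q_i$. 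Because $\tau$ is tracial, $a\mapsto\tau(f(a))$ is convex and unitarily invariant for every convex $f$, so $\tau(f(h))\le\tau(f(g))$; choosing $f(t_0)=(t_0-t)_+$ and cancelling $s>0$ (positive since $A$ carries a faithful trace) gives $\sum_i(\mathrm{Re}(\bar\theta\lambda_i)-t)_+\le\sum_i(\mathrm{Re}(\bar\theta\mu_i)-t)_+$ for all $\theta\in\cc$, $t\in\rr$. These are precisely the inequalities $\sum_if(\lambda_i)\le\sum_if(\mu_i)$ over the convex generators $f=(\mathrm{Re}(\bar\theta\,\cdot\,)-t)_+$ that characterize $\nla\prec\nmu$, and \ref{lemA} $(1)\Leftrightarrow(2)$ completes the proof.
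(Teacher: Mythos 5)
Your part (2) and the forward half of (1) follow essentially the paper's route: build a system of matrix units from the mutual equivalence of the $p_i$, realize $\Sigma_n$ as unitaries of $pAp$, and apply Birkhoff's theorem. Two remarks there. First, your computation yields $\phi_D\big(\sum_i\mu_ip_i\big)=x$, not the literally stated $\phi_D(x)=\sum_i\mu_ip_i$; that is the direction consistent with Lemma \ref{lemA}(3)--(4), and indeed the only possible one (for $\nmu=(1,0)$ and $\nla=(1/2,1/2)=D\nmu^T$ one has $x=\tfrac12 p$, which every unital map fixes), so you have proved what is surely intended, but you should flag the discrepancy rather than write ``as required in (2).'' Second, you are right to isolate the extension of $w=\sum_i w_i$ to a genuine unitary as the nontrivial point in the forward direction: the paper simply asserts the existence of $u\in U(A)$ with $u^*p_iu=q_i$ for all $i$, which needs $1-p\sim 1-q$ and hence some cancellation (e.g.\ stable rank one); for a bare \CA\ this can fail, so your caution is warranted and matches how the corollary is actually used later.

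The genuine gap is in your reverse direction of (1). The inequalities you extract, $\sum_i(\Re(\bar\theta\lambda_i)-t)_+\le\sum_i(\Re(\bar\theta\mu_i)-t)_+$ for all $\theta\in\C$ and $t\in\R$, say exactly that the real $n$-tuple $(\Re(\bar\theta\lambda_i))_i$ is classically majorized by $(\Re(\bar\theta\mu_i))_i$ in every direction $\theta$. That is \emph{directional} majorization of the tuples viewed as points of $\R^2$, and for planar tuples directional majorization is strictly weaker than the existence of a doubly stochastic $D$ with $\nla^T=D\nmu^T$: the equivalence with convex test functions requires \emph{all} convex $f:\C\to\R$, not only the ridge functions $(\Re(\bar\theta\,\cdot\,)-t)_+$, and counterexamples to ``directional implies matrix majorization'' exist already for four points in the plane. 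So your last step does not deliver $\nla\prec\nmu$. The argument the paper actually relies on (it is the $(4)\Rightarrow(1)$ computation of Lemma \ref{lemA}, carried out in approximate form inside Lemma \ref{Ln43}) avoids this: writing $x=\sum_k t_k u_k^*yu_k$ and $\Phi=\sum_k t_k\,{\rm Ad}\,u_k$, one defines the candidate doubly stochastic matrix directly from the numbers $\tau(p_j\Phi(q_i)p_j)/\tau(q_i)$ and verifies double stochasticity and $\nla^T=D\nmu^T$ from $\Phi(y)=x$ and $\tau\circ\Phi=\tau$. Note finally that both your argument and this one presuppose a tracial state with $\tau(p_i)>0$, which the stated hypothesis ``$A$ is a \CA'' does not provide; this assumption should be made explicit, since without it even the statement of (1) can fail.
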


\begin{proof}
There exists $u\in U(A)$ such that $u^*p_iu=q_i,i=1,...,n$.
We  have $$x=\sum_{i=1}^n\lam_ip_i=u^*\Big(\sum_{i=1}^n\lam_iq_i\Big)u.$$
Therefore
$
x\in\conv(\uu(y))$ if and only if $ \sum_{i=1}^n\lam_iq_i\in\conv(\uu(y)).
$

Thus $(1)$ follows from $(1)\Leftrightarrow (2)$ in   \ref{lemA} immediately.

(2)  Since $pAp$ is isomorphic to $M_n(p_1Ap_1)$. $U(M_n(\cc))$ can be viewed as an subset $U(pAp)$.
For any $D\in\dd_n$, by $(1)\Leftrightarrow (4)$ in \ref{lemA},
$\phi_D=\sum_{\sigma}t_\sigma Ad(u_\sigma)$, defined on $M_n(\cc)$,
can be extended to $pAp$.

\end{proof}


\begin{lemma}\label{Ln25}
Suppose $A$ is a unital \CA.

(1) If $K>1$ is an integer and
$\{e_i,p_{i}:i=1,...,l\}$
are mutually orthogonal  projections of $A$ with $(K+2)[e_i]=_u[p_i],i=1,2,...,l,$
and if
$$x=\sum_{i=1}^l\lam_ip_i\in pAp,\,\,\,\,\,
x'=\sum_{i=1}^l\lam_ie_i\in eAe,$$
then
\beq
x+x'\in_{\ep_1}\conv(\uu_{p+e}(x)),
\eneq
where $p=\sum_{i=1}^lp_i,e=\sum_{i=1}^le_i$ and $\ep_1=\frac{\|x\|}{K}.$

(2) If $q,e,p$ are mutually orthogonal projections in $A$ with $q+e+p=1_A$ and
$K[e]\le_u[q]$.
Then, for any  $y'\in eAe$ and any
$y\in \nnn (pAp),$
one has that
\beq
y\in_{\ep_2}\conv(\uu(y'+y)),
\eneq
where  $\ep_2={\frac{\|y'\|}{K+1}}$.

(3) If $e,p$ are mutually orthogonal projections with $e+p=1_A$, $\{p_i,e_i,i=1,2,...,l\}$ are projections with $\sum_{i=1}^lp_i=p,\sum_{i=1}^le_i=e$ and $(K+2)[e_i]\le_u [p_i],i=1,2,...,l,$
where $K>1,$ and if  $x=\sum_{i=1}^l\lam_ip_i$ and $x'=\sum_{i=1}^l\lam_ie_i$, then for any
$y'\in eAe$,
\beq
x+x'\in_{\ep_3}\conv(\uu(x+y')),
\eneq
where $\ep_3=\frac{\|y'\|+3\|x\|}{K}.$
\end{lemma}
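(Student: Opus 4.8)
The plan is to prove all three parts by the same device: to exhibit an \emph{explicit} convex combination of unitary conjugates of the given element and estimate its distance to the target, the gain coming from the fact that averaging over a suitable family of ``shift'' or ``swap'' unitaries dilutes an unwanted summand by the number of available orthogonal copies. For \textbf{(1)}, I would first use $(K+2)[e_i]=_u[p_i]$ to split each $p_i=f_{i,1}+\cdots+f_{i,K+2}$ with $f_{i,s}$ equivalent to $e_i$, and set $f_{i,0}=e_i$, so that inside $(p_i+e_i)A(p_i+e_i)$ there are $K+3$ mutually equivalent projections. Using partial isometries realizing these equivalences I build, for $s=0,1,\dots,K+2$, a unitary $u_s\in U((p+e)A(p+e))$ that cyclically permutes $\{f_{i,0},\dots,f_{i,K+2}\}$ by $s$ in every block simultaneously. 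Since $x=\sum_i\lam_i\sum_{s=1}^{K+2}f_{i,s}$ carries the value $\lam_i$ on every $f_{i,s}$ except $f_{i,0}=e_i$, conjugation by $u_s$ merely moves the single ``missing'' slot, so averaging gives
$$\frac{1}{K+3}\sum_{s=0}^{K+2}u_s^*xu_s=\frac{K+2}{K+3}\sum_i\lam_i(p_i+e_i)=\frac{K+2}{K+3}(x+x').$$
This element lies in $\conv(\uu_{p+e}(x))$, and (using $\|x+x'\|=\max_i|\lam_i|=\|x\|$) its distance to $x+x'$ is $\tfrac1{K+3}\|x+x'\|=\tfrac{\|x\|}{K+3}<\ep_1$.

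For \textbf{(2)}, since $K[e]\le_u[q]$ there are mutually orthogonal $q_1,\dots,q_K\le q$ with each $q_j$ equivalent to $e$; let $w_j\colon e\to q_j$ be partial isometries and let $u_j\in U(A)$ be the self-adjoint swap $w_j+w_j^*+(1-e-q_j)$, with $u_0=1$. Because $q\perp p$, each $u_j$ fixes $y\in pAp$, so $u_j^*(y'+y)u_j=w_jy'w_j^*+y$, a copy of $y'$ placed in the corner $q_jAq_j$ together with $y$. These copies lie in the mutually orthogonal corners $eAe,q_1Aq_1,\dots,q_KAq_K$, so averaging over $j=0,\dots,K$ produces a point of $\conv(\uu(y'+y))$ equal to $y+\tfrac{1}{K+1}\big(y'+\sum_{j=1}^Kw_jy'w_j^*\big)$, whose distance to $y$ is at most $\tfrac{1}{K+1}\|y'\|=\ep_2$.

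For \textbf{(3)} the two ideas are combined into a single average. Using $(K+2)[e_i]\le_u[p_i]$, choose $\tilde p_i\le p_i$ with $[\tilde p_i]=(K+2)[e_i]$, split $\tilde p_i=\sum_{s=1}^{K+2}f_{i,s}$ with $f_{i,s}$ equivalent to $e_i$ via $w_{i,s}\colon e_i\to f_{i,s}$, and put $r_i=p_i-\tilde p_i$, $\tilde x=\sum_i\lam_i\tilde p_i$. For each $s$ let $u_s\in U(A)$ be the swap exchanging every $e_i$ with the corresponding $f_{i,s}$. Since $x=\sum_i\lam_ip_i$ is constant ($=\lam_i$) on $p_i\supseteq f_{i,s}$ and vanishes on $e$, one computes $u_s^*xu_s=x-\sum_i\lam_if_{i,s}+x'$, while $u_s^*y'u_s=\hat y_s'$ is a copy of $y'$ supported on $\sum_if_{i,s}$ with $\|\hat y_s'\|=\|y'\|$; the residual $r_i$-part of $x$ is untouched. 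Averaging over $s=1,\dots,K+2$ then yields a point of $\conv(\uu(x+y'))$ equal to $(x+x')-\tfrac1{K+2}\tilde x+\tfrac1{K+2}\sum_s\hat y_s'$, and since the $\hat y_s'$ sit in orthogonal corners the distance to $x+x'$ is at most $\tfrac{1}{K+2}(\|\tilde x\|+\|y'\|)\le\tfrac{\|x\|+\|y'\|}{K+2}<\ep_3$. (Conceptually this single average is performing the ``(1)-move'' of pulling the values $\lam_i$ over onto $e$ and the ``(2)-move'' of pushing $y'$ into copies inside $p$ at once; one could instead run these two steps separately and combine the estimates through Lemma \ref{lem00}, at the cost of the looser constant $\ep_3$.)

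The routine parts — verifying that the shift/swap elements really are unitaries (which rests only on the comparison encoded in $=_u$ and $\le_u$, so that the needed partial isometries exist), that the indicated combinations are genuinely convex, and that conjugation acts exactly as claimed — I would dispatch block by block. The one point that demands care, and the main obstacle, is the support bookkeeping after averaging: every estimate gains precisely because the diluted copies $w_jy'w_j^*$ (respectively $\hat y_s'$) occupy \emph{mutually orthogonal} corners, so the norm of their normalized sum is a single copy norm divided by the number of copies rather than multiplied by it. In \textbf{(3)} one must additionally notice that the leftover term $\tfrac1{K+2}\tilde x$ and the averaged copies $\tfrac1{K+2}\sum_s\hat y_s'$ share the support $\tilde p$, so only the triangle inequality (not orthogonality) is available between them, which is exactly why the $\|x\|$-contribution survives in $\ep_3$.
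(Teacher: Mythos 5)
Your proof is correct, and for parts (1) and (2) it follows essentially the same route as the paper: part (2) is identical (average the $K+1$ orthogonally supported copies of $y'$ obtained by swapping $e$ into $q$), and your single cyclic average over $K+3$ equivalent slots in part (1) is the same dilution mechanism as the paper's preliminary claim comparing $\diag(a,\dots,a)$ with $\diag(0,a,\dots,a)$, just performed in one step (the paper first conjugates $x$ so that one $[e_i]$-sized slot of each $p_i$ lands on $e_i$ and then applies the claim with $K+2$ slots; both yield a bound $\le \|x\|/K$).

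Part (3) is where you genuinely diverge, and to your advantage. The paper proves (3) by running (1) and (2) in sequence: it first arranges $0\in\sp(x)$ (translating by $\lam_1$ otherwise, which is the source of the $3\|x\|$ in $\ep_3$), takes $q=p_1$ with $\lam_1=0$, and applies (2) with $(q,e,p)=(p_1,e,\sum_{i\ge 2}p_i)$ to dilute $y'$ before restoring $x'$ via (1). That application of (2) requires $K[e]\le_u[p_1]$, which does not follow from the blockwise hypothesis $(K+2)[e_i]\le_u[p_i]$ once $l\ge 2$; your combined swap average performs the dilution of $y'$ and the transfer of the values $\lam_i$ onto $e$ blockwise and simultaneously, so it only ever uses the comparison that is actually assumed, needs no case analysis on whether $0\in\sp(x)$, and delivers the sharper bound $(\|x\|+\|y'\|)/(K+2)<\ep_3$. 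The two points that must be written out in full are exactly the ones you flag: that $W_s=\sum_i w_{i,s}$ is a partial isometry from $e$ onto $\sum_i f_{i,s}$, so each $u_s$ is a unitary and $u_s^*y'u_s=W_sy'W_s^*$ even when $y'$ is not diagonal with respect to the $e_i$; and that the differences $\sum_i\lam_i f_{i,s}-W_sy'W_s^*$ for distinct $s$ sit in mutually orthogonal corners, so their normalized sum has norm at most $\max_s\|\sum_i\lam_i f_{i,s}-W_sy'W_s^*\|/(K+2)\le(\|x\|+\|y'\|)/(K+2)$. Both checks go through exactly as you describe.
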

\begin{proof}
 (1)  We first claim the following:

   If
   $$
   y=\diag(\overbrace{a,...,a}^K)\andeqn y'=\diag(0, \overbrace{a,a,..a}^{K-1}),
   $$
   then
   \beqq\label{Ln-1}
  {\rm dist}(y, \conv(\uu(y')))\le \frac{\|a\|}{K}.
   \eneqq
   There are unitaries $u_j\in A$ such that
   \beq
   u_j^*y'u_j=\diag(\overbrace{a, a,...,a}^{j}, 0, a,..,a),\,\,\, j=1,2,..,K-1.
   \eneq
 Then
 \beq
 (1/K)y'+\sum_{j=1}^{K-1}(1/K)u_j^*y'u_j=\frac{K-1}{K}\diag(\overbrace{a,a,...,a}^K).
 \eneq
 Note that
 \beq
 \|y-\frac{K-1}{K}\diag(\overbrace{a,a,...,a}^K)\|=\|a\|/K.
 \eneq
 This proves the claim.

There are mutually orthogonal projections $p_{j,i}:1\le j\le l,1\le i\le K+2$ such that $p_i=\sum_{j=1}^{l}p_{j,i}$ and $[p_{j,i}]=_u[e_i],i=1,2,...,K+2$.
  There is a unitary $u\in (p+e)A(p+e)$  such that
 \beqq\label{n25-1}
 u^*xu=x'\oplus \sum_{j=1}^l \lambda_jp_j',
 \eneqq
where $p_j'=\sum_{i=2}^{K+2}p_{j,i}.$ Put $x_1=\sum_{j=1}^l \lambda_jp_j'.$
Note that $\lambda_jp_j'$ may be written as
\beq
\lambda_jp_j'=\diag(\overbrace{\lambda_j,\lambda_j,...,\lam_j}^{K})\oplus \lam_j p_{j,K+2}.
\eneq
By the claim above, ${\rm dist}(x, \conv(\uu(x_1)))\le \ep_1.$ It follows from this, \eqref{n25-1}  and \ref{lem00} that
$$
{\rm dist}(x+x', \conv(\uu(x))\le \ep_1.
$$

(2) There are unitaries $u_j\in A$ and mutually { {orthogonal}} projections $q_j\in qAq$ such that
$u_j^*eu_j=q_j,$ $j=1,2,...,K.$
Let $t_j=\frac 1{K+1},$ $j=0, 1,2,...,K,$
define
\beq
y_0= t_0y'+ \sum_{j=1}^K t_ju_j^*y'u_j.
\eneq
Then $y_0\in {\rm conv}({\cal U}_{q+e}(y')).$ Moreover. $\|y_0\|\le \frac{\|y'\|}{K+1}.$
It follows that
\beq
0\in_{\ep_2} \conv(\uu_{q+e}(y')).
\eneq
Since $y\in\conv(\uu_{p}(y))$, by \ref{lem00}, we have
\beq
y\in_{\ep_2}\conv(\uu(y+y')).
\eneq

(3) First, we consider the case  $(K+2)[e_i]=_u[p_i].$

If  $0\in \sp(x),$ without loss generality,
we may assume $\lam_1=0$. Let $q=p_1, p'=\sum_{i=2}^lp_i$, then  by (1) (where  $p$
is replaced by $p'$),  $x+x'\in_{\frac{\|x\|} {K}}\conv(\uu_{e+p'}(x))$. Applying (2),
$$x\in_{\eta_1}\conv(\uu_{q+p'+e}(x+y')),\,\,\,\,\,{\rm where}\,\,\,\eta_1=\frac {\|y'\|}{K+1}.$$
So by  \ref{lem00},
$$x+x'\in_{\eta_2}\conv(\uu(x+y')),\,\,\,\,\eta_2=\frac {\|x\|+\|y'\|}{K}.$$

 In case $0\not\in \sp(x),$  we consider $x-\lam_1$ and $y'-\lam_1.$ Then  $0\in\sp(x-\lam_1).$  Replacing $x,y'$ by $x-\lam_1,y'-\lam_1$, by  the proof above, we have
$$x+x'-\lam_1\in_{\eta_3}\conv(\uu(x+y'-\lam_1)),\,\,\,\,\eta_3=
\frac{\|y'-\lam_1\|+\|x-\lam_1\|}{K}.$$
Therefore,
$$x+x'\in_{\ep_3}\conv(\uu(x+y')),\,\,\,\,{\rm where}\,\,\,\ep_3=\frac{\|y'\|+3\|x\|}{K}.$$

In general,
let projection $p'_i\le p_i$ with $(K+2)[e_i]=[p'_i]$ and $x_1=\sum_{i=1}^l\lam_ip'_i.$ Then from what has been proved,
$x_1+x'\in_{\ep_3}\conv(\uu(x_1+y'))$.
It then follows from  \ref{lem00} and the fact $x+x'=(x_1+x')\oplus(x-x_1)$ that
$x+x'\in_{\ep_3}\conv(\uu(x+y')).$
\end{proof}

\begin{proposition}\label{PPP}
Let $A$ be a unital \CA, $x$ and $y$ be two normal elements.
Suppose that $x\in \overline{\conv(\uu(y))}.$ Then
there exists a sequence of  trace preserving \cpc s $\Phi_n: A\to A$ such that
$\lim_{n\to\infty}\|\Phi_n(y)-x\|=0.$
\end{proposition}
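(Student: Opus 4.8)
The plan is to realize $x$ as the image of $y$ under the natural averaging maps coming from the convex combinations that witness $x\in\overline{\conv(\uu(y))}$; this is exactly the construction already used in the implication $(2)\Rightarrow(3)$ of Lemma~\ref{lemA}, now applied to every approximant in a sequence rather than to a single convex combination.

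First, since $x$ lies in the norm closure of $\conv(\uu(y))$, for each $n\ge 1$ I would choose $x_n\in \conv(\uu(y))$ with $\|x_n-x\|<1/n$. By the very definition of the convex hull of the unitary orbit, each $x_n$ admits a finite expression $x_n=\sum_{i=1}^{m_n} t_i^{(n)}(u_i^{(n)})^* y\, u_i^{(n)}$ with $u_i^{(n)}\in U(A)$, $t_i^{(n)}\in[0,1]$, and $\sum_{i=1}^{m_n} t_i^{(n)}=1$.

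Next I would define $\Phi_n\colon A\to A$ by $\Phi_n(z)=\sum_{i=1}^{m_n} t_i^{(n)}(u_i^{(n)})^* z\, u_i^{(n)}$. Each inner conjugation $z\mapsto (u_i^{(n)})^* z\, u_i^{(n)}$ is a unital $*$-automorphism, hence completely positive and contractive, and it is trace preserving because $\tau((u_i^{(n)})^* z\, u_i^{(n)})=\tau(z)$ for every $\tau\in T(A)$ by the trace property. A convex combination of such maps inherits all of these features: complete positivity is stable under convex combinations; $\|\Phi_n(z)\|\le \sum_i t_i^{(n)}\|z\|=\|z\|$ gives contractivity; and $\tau\circ\Phi_n=\big(\sum_i t_i^{(n)}\big)\tau=\tau$ gives trace preservation. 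Thus each $\Phi_n$ is a trace preserving \cpc.

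Finally, by construction $\Phi_n(y)=x_n$, so $\|\Phi_n(y)-x\|=\|x_n-x\|<1/n\to 0$, which is the desired conclusion. I expect no genuine obstacle here: the entire content is the observation that the averaging operators attached to the defining convex combinations are themselves trace preserving completely positive contractions that carry $y$ to the approximants $x_n$. The real work — namely \emph{characterizing} when a normal element $x$ belongs to $\overline{\conv(\uu(y))}$ — lies in the later results advertised in the introduction, whereas the present proposition is merely the elementary forward reformulation of the hypothesis in terms of such maps.
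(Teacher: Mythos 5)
Your proof is correct and is essentially identical to the paper's own argument: both take the convex combinations $\sum_i t_i^{(n)} (u_i^{(n)})^* y\, u_i^{(n)}$ witnessing $x\in\overline{\conv(\uu(y))}$ and observe that the associated averaging maps $\Phi_n$ are unital, trace preserving, completely positive, and send $y$ to the approximants. No gaps; the verification of trace preservation and complete positivity matches the paper's.
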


\begin{proof}
There are $0\le \lambda_{i,n}\le 1$ with
$\sum_{i=1}^{r(n)} \lambda_{i,n}=1$ and
unitaries $u_{i,n}\in A$ such that
\beq
\lim_{n\to\infty}\|x-\sum_{i=1}^{r(n)}\lambda_{i,n} (u_{i,n}^* yu_{i,n})\|=0.
\eneq
Define $\Phi_n: A\to A$ by
$\Phi_n(a)=\sum_{i=1}^{r(n)} \lambda_{i,n}u_{i,n}^* au_{i,n}$ for all $a\in A.$
Then $\psi_n$ is a unital \cpc\, and
\beq
\tau(\Phi_n(a))=\sum_{i=1}^{r(n)}\lambda_{i,n}\tau(u_{i,n}^*au_{i,n})
=\sum_{i=1}^{r(n)}\lam_{i,n}\tau(a)=\tau(a)
\eneq
for all $a\in A.$

\end{proof}

The following is known. We state here for reader's convenience.

\begin{lemma}\label{Lcp}
Let $A$ be a unital simple \CA\, with $T(A)\not=\emptyset.$
Let $a, b\in A_+$ with $\|a\|, \|b\|\le 1$ such that $\tau(a)<\tau(b)$ for all $\tau\in T(A).$
Then there is $0\le b_0\le b$ such that $\tau(b_0)=\tau(a)$ for all $\tau\in T(A).$
Moreover, there are $y_n\in A$ such that
\beq
\sum_{n=1}^{\infty}y_n^*y_n=a\andeqn \sum_{n=1}^{\infty}y_ny_n^*=b_0\le b,
\eneq
where the sums converge in norm.
\end{lemma}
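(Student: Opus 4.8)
The plan is to separate the statement into two essentially independent pieces: first produce the element $b_0$, and then obtain the factorisation $a=\sum y_n^*y_n$, $b_0=\sum y_ny_n^*$ by invoking the Cuntz--Pedersen equivalence theorem. In this arrangement the trace identity $\tau(b_0)=\tau(a)$ is recovered as a by-product of the second step, so that essentially all of the work is concentrated in constructing $b_0$.

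I would begin with a compactness reduction. Since $T(A)$ is compact in the weak-$*$ topology and $\tau\mapsto\tau(b-a)$ is continuous and strictly positive, there is $\delta>0$ with $\tau(b)-\tau(a)\ge\delta$ for all $\tau\in T(A)$. Because $\|b\|\le 1$ gives $\tau(b)\le 1$, one checks that $\tau(a)\le(1-\delta)\tau(b)$ for every $\tau$; that is, the function $\tau\mapsto\tau(a)$ is dominated by $\tau\mapsto\tau((1-\delta)b)$ on $T(A)$, while $(1-\delta)b\le b$. After replacing $b$ by $b':=(1-\delta)b$ the problem thus becomes: realise the trace function of $a$ as the trace function of some $b_0$ lying in the order interval $[0,b']$, the original strict gap now appearing as comparison of the two trace functions with room to spare.

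Constructing such a $b_0$ is the crux. The route I would try is to pass to the trace representation $M=\bigoplus_{\tau}\pi_\tau(A)''$: there Borel functional calculus lets one shave the spectral mass of $b'$ so as to meet the trace function of $a$ exactly, producing $B_0\le b'$ in $M$ with $\tau(B_0)=\tau(a)$ for all $\tau$; the remaining task is to pull $B_0$ back into $A$, staying inside $[0,b']$ and preserving the trace values exactly. An alternative is a direct inductive scheme in $A$: using the strict room and the simplicity of $A$ (every nonzero positive element generates $A$ as an ideal, so small trace quantities can be redistributed by finitely many inner perturbations), one realises successively smaller positive trace corrections by genuine elements of $A$ dominated by the running remainder, the resulting series converging in norm to $b_0\le b'$. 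I expect the main obstacle to be precisely this point: achieving \emph{exact} equality of trace values by some $b_0\le b$ \emph{within} $A$, since a general simple \CA\ carries no comparison theory; it is the gap $\delta$ together with simplicity that must supply the room to run the correction to a true limit rather than merely an approximation.

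For the final assertion, suppose $b_0\le b$ has been produced with $\tau(b_0)=\tau(a)$ for all $\tau\in T(A)$, and set $h=a-b_0\in A_{s.a}$, so $\tau(h)=0$ for every $\tau\in T(A)$. Since $A$ is unital and simple, the domain ideal of any densely defined lower semicontinuous trace is a nonzero ideal, hence all of $A$, so every such trace is bounded and therefore a scalar multiple of a tracial state; consequently $h$ is annihilated by all densely defined lower semicontinuous traces on $A$. By the Cuntz--Pedersen equivalence theorem this is exactly the condition $h\in\overline{A_0}$, where $A_0$ is the closed linear span of $\{x^*x-xx^*:x\in A\}$, and the same theorem then yields $y_n\in A$ with $a=\sum_n y_n^*y_n$ and $b_0=\sum_n y_ny_n^*$, both sums convergent in norm. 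This is the claimed factorisation, and applying any $\tau\in T(A)$ term by term recovers $\tau(b_0)=\tau(a)$, confirming the first assertion as well.
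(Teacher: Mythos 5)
Your second step (Cuntz--Pedersen applied to $h=a-b_0$) is essentially sound, but the entire weight of the lemma rests on the first step, the construction of some $b_0\le b$ with $\tau(b_0)=\tau(a)$ \emph{exactly} for all $\tau\in T(A)$, and that is precisely the step you do not carry out. You offer two sketched routes (spectral shaving in the trace representation followed by ``pulling back'' into $A$, or an unspecified inductive correction scheme) and you yourself flag this as ``the main obstacle.'' Neither sketch closes the gap: pulling a spectrally truncated element of a von Neumann completion back into the order interval $[0,b]$ of $A$ while preserving all trace values exactly is not a routine matter in a general unital simple $C^*$-algebra, and the inductive scheme as described would at best yield approximate trace equality; nothing in your argument explains how the margin $\delta$ converts an approximation into exact equality for a norm-convergent limit. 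So the proposal as written does not prove the lemma.

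The paper sidesteps this difficulty entirely by reversing the order of your two steps. It first realizes the \emph{complementary} affine function $\tau\mapsto\tau(b-a)$ as the trace of some positive $b'\in A$ with no domination constraint whatsoever (citing 9.3 of \cite{Lincltr1}); this is a much softer existence statement than the dominated one you need. It then forms $b_1=\diag(a,b')$ in $M_2(A)$, notes that $\tau(b_1)=\tau(\diag(b,0))$ for all traces of $M_2(A)$, applies Theorem 2.9 of \cite{CP} to obtain $x_n$ with $\sum_n x_n^*x_n=b_1$ and $\sum_n x_nx_n^*=\diag(b,0)$, and sets $y_n=x_ne_1$ with $e_1=\diag(1_A,0)$. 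Then $\sum_n y_n^*y_n=e_1b_1e_1=a$ and $b_0:=\sum_n y_ny_n^*\le\sum_n x_nx_n^*=b$, so the element $b_0$, its domination by $b$, and the equality $\tau(b_0)=\tau(a)$ all drop out of the factorization rather than having to be produced in advance. To repair your outline you should either adopt this $2\times 2$ trick or supply an actual proof of the exact-trace dominated existence statement you are assuming.
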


\begin{proof}
Let $f\in \Aff(T(A))_+$ be such that $f(\tau)=\tau(b-a)$ for all $\tau\in T(A).$
Let $1>\ep>0.$
It follows from  9.3 of \cite{Lincltr1}   that there exists $0\le b'\le 1+\ep$ in $A$ such that
$\tau(b')=f(\tau)$ for all $\tau\in T(A).$
Put $b_1=\diag(a,b')$ in $M_2(A)$ and put $B=M_2(A).$ Then
$\tau(b_1)=\tau(b)$ for all $\tau\in T(B).$  By Theorem 2.9  of \cite{CP},
$b_1-b\in A_0$ (notation in \cite{CP}). In other words, there are $x_1,x_2,...,$
in $B$ such that
\beq
\sum_{n=1}^{\infty}x_n^*x_n=b_1\andeqn \sum_{n=1}^{\infty} x_nx_n^*=b.
\eneq
Let $e_1=\diag(1_A,0).$ Then $e_1a=ae_1=a.$ Put $y_n=x_ne_1.$
Then
\beq
\sum_{n=1}^{\infty}y_n^*y_n=e_1(\sum_{n=1}^{\infty}x_n^*x_n)e_1=a\andeqn\\
\sum_{n=1}^{\infty}y_ny_n^*=\sum_{n=1}^{\infty}x_ne_1x_n^*\le \sum_{n=1}^{\infty}x_nx_n^*=b.
\eneq
Choose $b_0=\sum_{n=1}^{\infty}y_ny_n^*.$ Then
${ 0\le } b_0\le  b$ and $\tau(b_0)=\tau(a)$ for all $\tau\in T(A).$
\end{proof}

\section{
Simple $C^*$-algebras with tracial rank zero}

\begin{definition}[\cite{Linlondon}]\label{def2}
A unital simple $C^*$-algebra has tracial rank zero  if
for any $\ep>0$,  any non-zero $r\in A_+$,  any ${\cal F}=\{x_1,...,x_n\}\subset A$, there exists
a finite dimensional $C^*$-algebra $B\subset A$ with unit $p\in A$, and such that
\beq
&&x\approx_\ep x'+x'', \,\,\,where\,\,\, x'\in B,\\
&&x''\in (1-p)A(1-p),\,\,\,for\,\, all\,\,\, x\in {\cal F},\\
&&1-p\lesssim r.
\eneq
If $A$ has tracial rank zero, we write $TR(A)=0.$

The above definition  is  equivalent to the following:
For any $\ep>0$ and any finite subset ${\cal F}\subset A$  and any $r\in A_+\setminus \{0\},$
there
exists a projection $p\in A,$ a finite dimensional \SCA\, $F$ of $A$ with $1_F=p,$ and
a unital  ${\cal F}$-$\ep$-multiplicative  \cpc\, $L: A\to F$ such that
\beq
&&\|px-xp\|<\ep\rforal x\in {\cal F},\\
&&{\rm dist}(pxp, F)<\ep\tforal x\in {\cal F},\\
&&\|c-(pcp+L(c))\|<\ep\rforal x\in {\cal F}\andeqn\\
&&1-p\lesssim r.
\eneq

If $TR(A)=0,$ then $A$ has stable rank one and real rank zero.
Moreover, if $p, q\in M_n(A)$ are two projections for some integer $n\ge 1$
and $\tau(p)<\tau(q)$ for all $\tau\in T(A),$ then $[p]\le [q]$ {\rm (}see  {\rm \cite{Linlondon})}.

\end{definition}

\begin{lemma}\label{Csec3for4}
{{Suppose $A$ is a unital simple $C^*$-algebra with tracial rank zero. If $x,y\in\nnn(A)$,
then
for any $\ep>0$, any  integer $K\ge 1$,  and any nonzero projection $r\in A,$  there exists
a finite dimensional $C^*$-algebra $B$ of $A$ with unit $p$ such that}}

 $x\approx_\ep x'+x'',y\approx_\ep y'+y''$, where $x',y'\in \nnn(B), x'',y''\in \nnn((1-p)A(1-p))$,
 \beqq\label{lem11}
x'=\sum_{i=1}^l\lam_{i}p_{j}\,\,\,\,\,\,\,
y'=\sum_{i=1}^m\mu_iq_i,
\eneqq
where $\{\lam_1, \lam_2,...,\lam_l\}$ is $\ep$-dense in $\sp(x),$
$\{\mu_1, \mu_2,...,\mu_m\}$ is $\ep$-dense in $\sp(y),$
and
\beqq\label{lem13}
K[1-p]\le [r],\,\,\, K[1-p]\le [p_i]\andeqn  K[1-p]\le [q_j],
\eneqq
$i=1,...,l, \,\, j=1,2,...,m.$
Moreover, we may also assume that $\sp(x'')$ is $\ep$-dense in $\sp(x)$ and
$\sp(y'')$ is $\ep$-dense in $\sp(y).$

\end{lemma}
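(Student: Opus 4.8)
The strategy is to apply the tracial rank zero property to the pair $\{x,y\}$ simultaneously, while arranging the finite dimensional algebra $B$ to have matrix summands whose ranks are large enough to accommodate the majorization factor $K$ required in \eqref{lem13}. First I would choose, from the outset, finite $\ep/2$-nets $\{\lam_1,\dots,\lam_l\}\subset\sp(x)$ and $\{\mu_1,\dots,\mu_m\}\subset\sp(y)$, together with a continuous functional-calculus partition of unity realizing these nets; this fixes the spectral data so that the discretizations $x'=\sum_i\lam_ip_i$ and $y'=\sum_j\mu_jq_j$ automatically have $\{\lam_i\}$ and $\{\mu_i\}$ $\ep$-dense in the respective spectra.

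Next I would feed a suitable finite subset $\mathcal{F}$ (containing $x$, $y$, and the spectral cutdown functions of both) and a sufficiently small tolerance into the second (equivalent) formulation of $TR(A)=0$ in \ref{def2}. This yields a projection $p$, a finite dimensional $F\subset A$ with $1_F=p$, and a unital $\mathcal{F}$-$\ep'$-multiplicative \cpc\, $L:A\to F$ with $pxp$ and $pyp$ nearly in $F$ and $1-p\lesssim r$. Since $\ep'$ is small and $L$ is nearly multiplicative on the relevant functions, $L(x)$ and $L(y)$ are approximately normal in $F$; using that $F$ is finite dimensional and that normals in a finite dimensional algebra have finite spectrum, I would perturb them to genuine normal elements $x',y'\in\nnn(B)$ of the form \eqref{lem11} for a subalgebra $B\subseteq F$, keeping the eigenvalues inside the chosen nets. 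The complementary parts $x''=(1-p)x(1-p)$ and $y''=(1-p)y(1-p)$ are (after a small perturbation to restore normality) normal elements of $(1-p)A(1-p)$, and the final density clause for $\sp(x'')$, $\sp(y'')$ can be secured by noting that in a simple \CA\, with $TR(A)=0$ the corner $(1-p)A(1-p)$ is again of tracial rank zero and infinite dimensional, so the restriction $(1-p)x(1-p)$ has spectrum $\ep$-dense in $\sp(x)$ after a further application of the approximation (or by absorbing a small spectral copy into the corner).

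The genuinely delicate point is the rank condition \eqref{lem13}: it is not enough to obtain \emph{some} finite dimensional $B$; each matrix block supporting a projection $p_i$ or $q_j$ must have multiplicity at least $K\,[1-p]$. The plan here is to exploit that $TR(A)=0$ gives not merely a projection $p$ with $1-p\lesssim r$ but one whose complement is tracially small: by choosing the comparison element $r$ in the definition to be dominated (in the sense $\lesssim$) by a projection of trace $< 1/(K\cdot(\text{number of blocks}))$, and using the comparison property stated at the end of \ref{def2} (that $\tau(e)<\tau(f)$ for all $\tau$ implies $[e]\le[f]$), I can force $K[1-p]\le[p_i]$, $K[1-p]\le[q_j]$ and $K[1-p]\le[r]$. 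Concretely, I would run the approximation with the tolerance and the auxiliary positive element small enough that each block projection $p_i,q_j$ has trace bounded below by a fixed positive constant (because the $\lam_i,\mu_j$ are drawn from a fixed finite net and the spectral measures are bounded away from degeneracy after regularization), while $\tau(1-p)$ is made smaller than $1/K$ times that constant uniformly over $\tau\in T(A)$. Balancing these two estimates — keeping the blocks large while driving $1-p$ tracially tiny — is where the main technical care is needed, and it is precisely here that the simplicity and comparison theory of $TR(A)=0$ algebras, rather than the bare Lin approximation, does the work.
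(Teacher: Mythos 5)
Your overall architecture is sound, and on the decisive point --- the comparison condition \eqref{lem13} --- you take a genuinely different route from the paper. The paper uses no trace estimates there: it applies the tracial rank zero approximation \emph{twice}. The first pass produces $B'$ with unit $p'$ and spectral projections $p_i',q_j'$; real rank zero and simplicity then yield a single nonzero projection $e_0\lesssim p_i'$ and $e_0\lesssim q_j'$ for all $i,j$; the second pass, run inside $(1-p')A(1-p')$ with $e_0$ as the comparison element, forces $K[1-p'-p'']\le [e_0]$, and \eqref{lem13} follows by transitivity. Your one-pass scheme --- arrange $\sup_\tau\tau(1-p)<\inf_\tau\tau(p_i)/K$ and invoke the comparison property recorded at the end of \ref{def2} --- is more economical and can be made to work, but it shifts all the weight onto the claim that each $\tau(p_i)$ admits a positive lower bound, uniform over $T(A)$, that is fixed \emph{before} the approximation is run.

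That claim is where your proposal has a real gap: you assert that the spectral measures are ``bounded away from degeneracy after regularization,'' but you supply no mechanism forcing the compression to see every part of $\sp(x)$. A priori the finite-dimensional approximant could concentrate its spectrum near a few of the $\lambda_i$, leaving some $p_i=0$ (which destroys \eqref{lem13} outright), and the same failure mode leaves your ``moreover'' clause about $\sp(x'')$ unaddressed --- neither ``a further application of the approximation'' nor ``absorbing a spectral copy into the corner'' is a worked-out device. The fix, which you need in some form, is the paper's: choose bumps $f_i$ supported near each $\lambda_i$, use simplicity to write $\sum_k a_{i,k}^*f_i(x)a_{i,k}=1_A$, and put the witnesses $a_{i,k}$ and the elements $f_i(x)$ into the finite set fed to the $TR(A)=0$ approximation; the cut-down relations $p'\approx\sum_k(a_{i,k}')^*f_i(x_0')a_{i,k}'$ and $1-p'\approx\sum_k(a_{i,k}'')^*f_i(x_0'')a_{i,k}''$ then force $f_i(x_0')\ne 0$ and $f_i(x_0'')\ne 0$ for every $i.$ With that in hand (and after pruning the net to be $\ep/2$-separated so the eigenvalue cells are disjoint), your bound $\tau(p_i)\ge\tau(h_i(x_0'))\ge \tau(h_i(x))-\tau(1-p')-\mathrm{error}$ does go through. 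A smaller caveat: restoring normality of the compressions is not a soft perturbation --- in the corner $(1-p)A(1-p)$ it needs the Friis--R\"{o}rdam theorem (4.4 of \cite{FR}, via stable rank one and (IR) of hereditary subalgebras), and even in the finite-dimensional part the estimate must be dimension-independent, which is exactly Lin's theorem rather than a routine fact about finite-dimensional algebras.
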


\begin{proof}
\Wlog, we may assume that $\|x\|,\|y\|\le 1.$
Note that every unital hereditary \SCA\, of $A$ has stable rank one
{\cite {Lin3}}.  In particular, they have (IR) (see \cite{FR}).
Fix $\ep>0.$ Then
fix $0<\ep_0 <\ep/4.$ By 4.4 of \cite{FR}, there exists $\dt>0$ such that
for any $z$ in a \CA\,    with (IR)   with $\|z\|\le 1$ and  with the property that
\beq
\|z^*z-zz^*\|<\dt,
\eneq
then there exists a normal element $z_N$ in that \CA\, such that
\beq
\|z-z_N\|<\ep_0/2.
\eneq
Let $\{\lambda_1,\lambda_2,...,\lambda_l\}\subset \sp(x)$ and let
$\{\mu_1,\mu_2,...,\mu_m\}\subset \sp(y)$ be such that both sets are $\ep/2$-dense in
$\sp(x)$ and $\sp(y),$ respectively.

There are $f_i\in C(\sp(x))$ and $g_j\in C(\sp(y))$ such that
$f_i(t)=1$ for $|t-\lambda_i|<\ep/4,$ $f_i(t)=0$ for $|t-\lambda_i|>\ep/2$ and
$0\le f_i(t)\le 1;$
$g_j(t)=1$ for $|t-\mu_j|<\ep/4,$ and $g_j(t)=0$ for $|t-\mu_j|>\ep/2,$ $1\le i\le l$ and $1\le j\le m.$
Note that $f_i(x)\not=0$ and $g_j(y)\not=0.$
Since $A$ is simple, there are $a_{i,k}, b_{j, k}\in A$ such that
\beq
\sum_{k=1}^{n(i)}a_{i,k}^*f_i(x)a_{i,k}=1_A\andeqn \sum_{k=1}^{n'(j)}b_{j,k}^*g_j(y)b_{j,k}=1_A.
\eneq
Let $M=\max\{n(i)\|a_{i,k}\|,  n(j)'\|b_{j,k}\|: i,j,k,\}.$
Choose  $\eta=\min\{\ep_0, \dt\}/4(M+1).$
Put ${\cal G}={\cal F}\cup \{x, y, a_{i,k}, b_{j,k}, f_i(x), g_j(x): i,j,k\}.$
Since $TR(A)=0,$  there is a finite dimensional \SCA\, $B'\subset A$ with
$p'=1_{B'}$ such that
\beq
&& B'=M_{r_1}(\C)\bigoplus M_{r_2}(\C)\bigoplus \cdots \bigoplus M_{r_N}(\C),\\
&&\|z-(z'+z'')\|<\eta\rforal z\in {\cal G},\\
&&\|x-(x_1+x_2)\|<\eta,\,\,\, \|y-(y_1+y_2)\|<\eta\andeqn\\
&&{{K}}
[ 1-p'] \le  [r],
\eneq
where $x_1, y_1, z'\in B',$ $x_2, y_2, z''\in (1-p')A(1-p').$
Moreover, we may assume, \wilog, that $\|x_i\|\le 1$ and $\|y_i\|\le 1,$ $i=1,2.$
Note that, since $x$ and $y$ are normal,
\beq
\|x_i^*x_i-x_ix_i^*\|<\dt \andeqn \|y_i^*y_i-y_iy_i^*\|<\dt,\,\,\,i=1,2.
\eneq
It follows from 4.4 of \cite{FR} that
there are $x_0', y_0'\in {\cal N}(B')$ and $x_0'', y_0''\in {\cal N}(A)$ such that
\beq
\|x_0-x_0'\|<\ep_0/2,\,\, \, \|x_0-x_0''\|<\ep_0/2,\,\,\, \|y_0-y_0'\|<\ep_0/2\andeqn\|y_0-y_0''\|<\ep_0/2.
\eneq
Then
\beq
\|x-(x_0'+x_0'')\|<\ep\andeqn \|y-(y_0'+y_0'')\|<\ep.
\eneq
Furthermore, if  $\ep_0$ and $\eta$ are   small enough,  \wilog,   we may assume that
\beq
p'=\sum_{k=1}^{n(i)}(a_{ik}')^*f_i(x_0')a_{ik}',
\,\,\,1-p'=\sum_{k=1}^{{{n(i)}}}
(a_{ik}'')^*f_i(x_0'')a_{ik}'',i=1,2,...,l,\\
p'=\sum_{k=1}^{n'(j)}(b_{jk}')^*g_i(y_0')b_{jk}'\andeqn  1-p'=\sum_{k=1}^{{{n'(j)}}}
(b_{jk}'')^*g_j(y_0'')b_{jk}'',j=1,2...,m.
\eneq
Therefore, we may further assume
that
\beq
x_0'=\sum_{i=1}^l \lambda_i p_i', \,\,\, y_0'=\sum_{j=1}^m \mu_j q_j',
\eneq
where $\pi_k(p_i')\not=0,$ $\pi_k(q_j')\not=0,$ and
{{$\sp(x_0''),\sp(y_0'')$ are $\ep/2$-dense in $\sp(x),\sp(y)$ respectively.}}

Since $A$ is a simple \CA\, of real rank zero, it is easy to find a nonzero
projection $e_0\lesssim p_i'$ and $e_0\lesssim q_j'$ for all $i$ and $j.$

Since
$TR((1-p')A(1-p'))=0$,  {{by repeating the above process, one obtains}}
a finite dimensional $C^*$ subalgebra
$$B''=M_{r_{N+1}}(\cc)\oplus\cdots\oplus M_{r_{k(m)}}(\cc)$$ in $(1-p')A(1-p')$ with unit {{$p''$}}  
such that

(i) $x_0''\approx_{\ep/2} x''_{1}+x''_2,
y_0''\approx_{\ep/2} y''_1+y''_2
$,
where $x''_1,y''_1\in \nnn({{B''}}),x''_2,y''_2\in \nnn(((1-p'-{{p''}})
A(1-p'-{{p''}}
))).$

(ii) $x_1''=\sum_{i=1}^{l'} {{\lambda'_i}}
p_i'',$ $y_1''=\sum_{j=1}^{m'}{{\mu'_j}}
 q_j'',$
where $\{p_1'', p_2'',...,p_{{ {l'}}}''
\}$ and $\{q_1'',q_2'',...,q_{{{m'}}}''
\}$ are two sets of mutually
orthogonal and mutually equivalent projections in $B''.$
{{Let $\{1,2,...,l'\}=\sqcup_{i=1}^l S_i$, where $S_i\subset\{k:|\lam_k'-\lam_i|<\ep/2\}$, then let $\sum_{k\in S_i}\lam_k'p_k''$ replaced by $\lam_i\sum_{k\in S_i}p_k$, without lost generality, we may assume $l'=l,m'=m,\lam'_i=\lam_i,\mu'_j=\mu_j.$}}

(iii) { {$\sp(x''_2),\sp(y''_2)$ are $\ep/2$-dense in $\sp(x''_0),\sp(y''_0)$ respectively,}}
 hence $\sp(x''_2),\sp(y''_2)$ are $\ep$-dense in $\sp(x),\sp(y)$ respectively.

(iv) ${{K}}[1-p'-{ {p''}}
]\le [e_0],$ and,

(v) $\pi_k(p_i'')\not=0$ and $\pi_k'(q_j'')\not=0$ for $\pi_k: B''\to M_{r_k}(\C)$
for all $k=N+1, N+2,...,k(m).$

 Let $B=B'\oplus B''$, $p:=1_B={{p'+p''}}
 ,$   and let $x'=x_0'+x_1'',$
 $y'=y_0'+y_1'',$ $x''=x_2''$ and $y''=y_2''.$
 Using rank one projections in each $M_{r_k},$ we may write
 \beq
 x'=\sum_{i=1}^l\lambda_i p_i\andeqn y'=\sum_{j=1}^m\mu_i q_j
 \eneq
 Then
\beq
K[1-p]\le [r], K[1-p]\le [p_i],\andeqn K[1-p]\le [q_j],
\eneq
for $ i=1,2,...,l,\,j=1,2,...,m.$

\end{proof}

\begin{lemma}\label{Ln43}
Let $A$ be a unital \CA\, with a unique tracial state $\tau$ and let
$\ep_1$  and $\ep_2$ be two positive numbers.
Suppose that
 $\phi:A\to A$  is  a unital completely positive linear map,
$\{p_1,p_2,...,p_{l}\}$ and $\{q_1, q_2, ...,q_{l}\}$ are two sets of  mutually orthogonally projections  with $\sum_{i=1}^l p_i=1_A,$ and there is a unitary $u\in A$ such that
 $u^*p_iu=q_i,$ $i=1,2,...,l.$

 Let
$x=\sum_{i=1}^{l}\lam_ip_i$   and $y=\sum_{i=1}^{l}\mu_iq_i$ be in $\nnn(A),$
where $\lam_i, \mu_i\in \C.$
 Suppose that $S_1, S_2,...,S_N$ are mutually disjoint subsets of $\{1,2,...,l\}$
 such that
 $\sqcup S_k=\{1,2,...,l\}$ and $\mu_i=\mu_k'$ for all $i\in S_k$
 (with $\mu_k'\in S_k$) and
  $y=\sum_{k=1}^N\mu_k' Q_k,$  where $Q_k=\sum_{i\in S_k} q_i.$    If
   $\phi(y)\approx_{\ep_1} x$ and
 \beqq\label{amenal}
 |\tau(\phi(Q_k))-\tau(Q_k)|\le s\ep_2,
 k=1,...,N,
 \eneqq
 where
 \beq
 s=\inf\{\tau(Q_k):k=1,2,...,N;\tau\in T(A)\}.
 \eneq
 Then there is a trace preserving unital completely positive map from $A$ to $A$ such that
\beq
\|\psi(y)-x\|\le 2 \ep_2\|y\|+3\ep_1\andeqn x\in_{2 \ep_2\|y\|+3\ep_1}\conv(\uu(y)).
\eneq
\end{lemma}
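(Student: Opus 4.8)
The plan is to build the required map $\psi$ as an honest average of unitary conjugations, $\psi=\sum_\alpha s_\alpha\,\mathrm{Ad}(w_\alpha)$ with $s_\alpha\ge 0$, $\sum_\alpha s_\alpha=1$. Any such map is unital and completely positive, and, since $\tau$ is the unique trace and is invariant under conjugation, it is automatically trace preserving; hence producing such an average within the stated distance of $x$ simultaneously yields both conclusions of the lemma, and it suffices to construct unitaries $w_\alpha$ and weights $s_\alpha$ with $\|x-\sum_\alpha s_\alpha w_\alpha^* y w_\alpha\|\le 2\ep_2\|y\|+3\ep_1$. First I would remove the unitary $u$: since $p_i=uq_iu^*$, the element $\tilde y=uyu^*=\sum_i\mu_i p_i$ lies in the abelian algebra $B=C^*(p_1,\dots,p_l)$, has the same unitary orbit as $y$, and, after replacing $\phi$ by $a\mapsto\phi(u^*au)$, all hypotheses persist with the same constant $s$ (as $\tau(uQ_ku^*)=\tau(Q_k)$). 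Thus I may assume $q_i=p_i$, so that both $x=\sum_i\lambda_i p_i$ and $y=\sum_k\mu_k'Q_k$, with $Q_k=\sum_{i\in S_k}p_i$, lie in $B$.

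Next I would compose with the trace preserving conditional expectation $E\colon A\to B$, $E(a)=\sum_i\tau(p_i)^{-1}\tau(p_iap_i)\,p_i$, which is unital and completely positive. The map $\psi_0=E\circ\phi$ is then unital completely positive, and because $E$ is contractive and fixes $x\in B$ we get $\|\psi_0(y)-x\|\le\ep_1$. Writing $\psi_0(Q_k)=\sum_i c_{ik}p_i$ with $c_{ik}=\tau(p_i\phi(Q_k))/\tau(p_i)\ge 0$, one checks that $\sum_k c_{ik}=1$, that the trace weighted column sums are $\sum_i\tau(p_i)c_{ik}=\tau(\phi(Q_k))$, and, reading off the $i$-th coefficient of $\psi_0(y)\approx_{\ep_1}x$ in $B$, that $|\lambda_i-\sum_k\mu_k'c_{ik}|\le\ep_1$ for each $i$.

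The heart of the argument is the trace correction. Put $t_{ik}=\tau(p_i)c_{ik}$, a nonnegative transport with row sums $\tau(p_i)$ and column sums $\tau(\phi(Q_k))=\tau(Q_k)+\delta_k$, where $\sum_k\delta_k=\tau(\phi(1))-\tau(1)=0$ and, by \eqref{amenal}, $|\delta_k|\le s\ep_2\le\ep_2\,\tau(Q_k)$. I would correct the column marginals to the exact values $\tau(Q_k)$ by a multiplicative (Sinkhorn type) rescaling $t'_{ik}=t_{ik}\,\tau(Q_k)/\tau(\phi(Q_k))$ followed by a renormalization of the rows; since each factor lies in $[(1+\ep_2)^{-1},(1-\ep_2)^{-1}]$, the mass moved in each row is at most $2\ep_2\tau(p_i)$, so the corrected row stochastic matrix $c'_{ik}=t'_{ik}/\tau(p_i)$ satisfies $\sum_k|c'_{ik}-c_{ik}|\le 2\ep_2$ and hence $|\sum_k\mu_k'(c'_{ik}-c_{ik})|\le 2\ep_2\|y\|$. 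This is precisely where the quantitative hypothesis \eqref{amenal} (errors controlled by $s\ep_2$, with $s$ the smallest block mass) is indispensable, as it converts the additive defects $\delta_k$ into uniformly small relative defects; I expect this to be the main obstacle, both in securing the clean bound $2\ep_2\|y\|$ and in checking that the corrected matrix is genuinely, rather than approximately, doubly (trace) stochastic.

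Finally I would realize the corrected data by unitaries. The matrix $(c'_{ik})$, with row sums $1$ and trace weighted column sums $\tau(Q_k)$, defines a unital trace preserving map of $B$ sending $y$ to $\sum_i(\sum_k\mu_k'c'_{ik})p_i$, which is within $2\ep_2\|y\|+\ep_1$ of $x$. To present this as a convex combination of unitary conjugates of $y$, I would pass to a common refinement of $\{p_i\}$ into mutually Murray--von Neumann equivalent subprojections (approximating each $\tau(p_i)$ by rationals with a common denominator and splitting accordingly, which is where the comparison theory of the ambient algebra enters), on which $(c'_{ik})$ becomes an honest doubly stochastic matrix; Birkhoff's theorem together with Corollary \ref{corB} then writes the target as $\sum_\sigma t_\sigma u_\sigma^* y u_\sigma$, the rationalization error being absorbed into the remaining $\ep_1$'s. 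Assembling the estimates via Lemma \ref{lem00} yields $x\in_{2\ep_2\|y\|+3\ep_1}\conv(\uu(y))$, and taking $\psi$ to be this average settles both assertions.
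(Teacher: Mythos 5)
Your architecture is the same as the paper's: conjugate by $u$ to reduce to $q_i=p_i$, compress onto the algebra generated by the $p_i$, read off from the traces a nonnegative matrix whose row sums are exact but whose column sums are only $\ep_2$-close to the target, correct it to an exactly (trace-)doubly stochastic matrix at a cost of $2\ep_2\|y\|$, and finish with Birkhoff. The gap sits exactly where you predicted: the correction step. One round of column rescaling $t_{ik}\mapsto t_{ik}\,\tau(Q_k)/\tau(\phi(Q_k))$ followed by row renormalization does not produce a matrix with both marginals exact --- the row renormalization perturbs the column sums again --- and iterating the Sinkhorn scheme is not an automatic repair, because multiplicative rescaling preserves the zero pattern of $(t_{ik})$, and a nonnegative matrix with the prescribed marginals supported on that pattern need not exist (nor is the accumulated perturbation over the iteration controlled by your one-step estimate). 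The paper fills this hole \emph{additively}: writing $\sum_j d_{i,j}=1+\ep_i'$ with $|\ep_i'|\le\ep_2$ and $\sum_i\ep_i'=0$ (the latter because $\phi$ is unital), it subtracts $\ep_{i,j}=d_{i,j}\ep_i'/(1+\ep_i')$ from the surplus rows and then uses Riesz interpolation to distribute the exactly matching total deficit back into the rows with $\ep_i'<0$ so that every column sum is restored exactly; this correction is allowed to create entries where $d_{i,j}=0$, which is precisely what a multiplicative scheme cannot do. Since the mass moved in each row and in each column is at most $\ep_2$, the clean bound $2\ep_2\|y\|+3\ep_1$ follows. Hypothesis \eqref{amenal} enters just as you said, to make the relative column defects at most $\ep_2$.

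A secondary issue is your final Birkhoff step: refining the $p_i$ into mutually equivalent subprojections by rational approximation of their traces is not possible in an arbitrary unital \CA\, with a unique tracial state (the lemma's hypotheses contain no comparison or real rank assumptions, and, e.g., in an irrational rotation algebra projections of prescribed rational trace need not exist). The paper's proof avoids this by working, in effect, with mutually equivalent $p_i$ of equal trace, so that the permutation unitaries of Lemma \ref{lemA} and Corollary \ref{corB} act directly on $M_l(p_1Ap_1)\cong 1\cdot A\cdot 1$; that is also the only setting in which the lemma is invoked later (finite-dimensional blocks $M_{R(r)}(\C)$). If you insist on general traces $\tau(p_i)$, you must either add such an equivalence hypothesis or replace the Birkhoff step by a direct construction of the averaging unitaries.
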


\begin{proof}
We first consider the case that $p_i=q_i,$ $i=1,2,...,l.$
 Let  $\phi': A\to A$ be defined by
 \beq
 \phi'(a)&=&\sum_{i=1}^{l}p_i\phi(a)p_i\rforal a\in A.
 \eneq
  Then $\phi'$ is a unital completely positive linear map from $A$ to $A.$
  Moreover, one checks that   $\tau(\phi'(a))=\tau(\phi(a))$
  for any $a\in A.$

 Define, for $i\in S_k,$
 \beq
 d_{i,j}=\tau(p_j\phi(Q_k)p_j)/\tau(Q_k),\,\,j=1,2,...,l,k=1,2,...,N.                                 
  \eneq
Note that
\beq
\sum_{i\in S_k} d_{i,j}=l\tau(p_j\phi(Q_k)),\,\,\, j=1,2,...,l,k=1,...,N.
\eneq

Since $\phi$ is unital, for any $j\in\{1,2,...,l\}$,
\beqq
\sum_{i=1}^{l}d_{i,j}
&=&l\sum_{k=1}^N\sum_{i\in S_k} \tau(p_j\phi(Q_k))/\tau(Q_{k})=1.
\label{Fsumdi}
\eneqq

Since $\|\phi(y)-x\|<\ep_1$ (note that we have assume that $q_i=p_i,$ $i=1,2,...,l$),
for any $j\in\{1,2,...,l\}$,
\beqq\label{43n-08-10}
\|\lambda_jp_j-\sum_{i=1}^{l}\mu_ip_j\phi(p_i)p_j\|
=\|p_j(x-\phi(y))p_j\|<\ep_1.
\eneqq
This also implies that
\beq
\|x-\phi'(y)\|<{\ep_1}.
\eneq
It follows that for any $j=1,2,...,l$,
\beqq\label{43n-08-11}
|\lambda_j-\sum_{i=1}^{l} \mu_id_{i,j}|
&=&l|\tau(\lambda_jp_j-\sum_{i=1}^{l}\mu_ip_j(\phi(p_i)p_j))|<\ep_1,
\eneqq

We also have, for any $i\in S_k,$
\beqq
\sum_{j=1}^{l} d_{i,j}
&=&\sum_{j=1}^{l}\tau(p_j\phi(Q_{ k}))/\tau(Q_{ k})\\
&=&\tau(\phi(Q_{k}))/\tau(Q_{ k}).
\eneqq
By \eqref{amenal}, { for any $k\in\{1,2,...,N\}$, }
\beq
|\tau(\phi(Q_k))/\tau(Q_k)-1|<s\ep_2/\tau(Q_k)\le \ep_2.
\eneq

Then there are  $\ep_i'\in\rr$ with $|\ep_i'|\le \ep_2$ such that
\beqq\label{neww}\sum_{j=1}^{l}d_{i,j}=1+\ep_i',i=1,...,l.\eneqq
Let $\Lambda_+=\{i:\ep_i\ge 0\}$ and $\Lambda_-=\{i:\ep_i<0\}$.
Note that
\beqq\label{newwww}
\sum_{i\in \Lambda_+}(1+\ep_i')+\sum_{i\in \Lambda_-}(1+\ep_i')=\sum_{i=1}^{l} \sum_{j=1}^{l} d_{i,j}=\sum_{j=1}^{l}\sum_{i=1}^{l} d_{i,j}=l.
\eneqq
Therefore
\beqq\label{aaa-n-1}
\sum_{i\in \Lambda_+}\ep_i'+\sum_{i\in \Lambda_-} \ep_i'=0.
\eneqq
Let
\beq
\ep_{i,j}=\frac{d_{i,j}\ep_i'}{1+\ep_i'},\,\,i\in \Lambda_+,\,\,j=1,...,l,
\eneq
then
\beqq\label{a1}
0\le\ep_{i,j}\le d_{i,j},i\in \Lambda_+;\,\,j=1,...,l,
\eneqq
\beqq\label{a11}
\sum_{j=1}^{l} \ep_{i,j}=\ep_i',\,\,\, i\in \Lambda_+,
\eneqq
and, by \eqref{Fsumdi},  for all $j,$
\beqq
\sum_{i\in \Lambda_+}\ep_{i,j}
&\le & \sum_{i=1}^{l}\frac{d_{i,j}\ep_i'}{1+\ep_i'}\\
&\le & \max_{1\le i\le l} \frac{\ep_i'}{1+\ep_i'}\Big(\sum_{i=1}^{l}d_{i,j}\Big)\\\label{a2}
&=&\max_{1\le i\le l} \frac{\ep_i'}{1+\ep_i'}<\ep_2,
\eneqq

Let $a_i=-\ep_i',\,i\in \Lambda_-$ and $b_j=\sum_{i\in \Lambda_+} \ep_{i,j},$ $j\in \{1,2,...,l\}$.

Then, by \eqref{aaa-n-1} and \eqref{a11},
\beq
\sum_{i\in \Lambda_-}a_i=\sum_{i\in \Lambda_-}(-\ep_i')=\sum_{i\in \Lambda_+}\ep_i'=\sum_{j=1}^{l}b_j.
\eneq

By the Reisz interpolation, (see page 85 of \cite {Alf}), there are $$\{\ep_{i,j}:i\in \Lambda_-;\,j=1,2,...,l\}$$
with $\ep_{i,j}\ge 0$ such that
\beqq\label{neww2}
&&\sum_{j=1}^{l}\ep_{i,j}=-\ep_i'= a_i,i\in \Lambda_-\andeqn\\\label{neww1}
&& \sum_{i\in \Lambda_-}\ep_{i,j}=b_j,\,\,\, j\in \{1,2,...,l\}.
\eneqq
Note that
\beqq\label{an3}
\sum_{j=1}^{l} \ep_{i,j}=\ep'_{i}<\ep_2,\,\,\, i\in \Lambda_+.
\eneqq

Put
\beq
&&d'_{i,j}=d_{i,j}-\ep_{i,j},
\,i\in \Lambda_+;j=1,...,l,\\
&&d'_{i,j}=d_{i,j}+\ep_{i,j},i\in \Lambda_-;j=1,2,...,l.
\eneq
By \eqref{a1}, $d_{i,j}'\ge 0.$
Then by \eqref{Fsumdi} and \eqref{neww},
\beq
&&\sum_{i=1}^{l}d'_{i,j}=1-\sum_{i\in \Lambda_+} \ep_{i,j}+\sum_{i\in \Lambda_-} \ep_{i,j} =1,\rforal j=1,2,...,l\,\,\,\hspace{0.3in} ({\rm using} \eqref{neww1})\\
&&\sum_{j=1}^{l}d_{i,j}'=1+\ep_i'-\sum_{j=1}^{l}\ep_{i,j}=1+\ep'_i-\ep'_i=1,\rforal i\in \Lambda_+\,\,\,\,\,\hspace{0.3in}({\rm using}\eqref{a11})\\
&&\sum_{j=1}^{l}d_{i,j}'=1+\ep_i'+\sum_{j=1}^{l}\ep_{i,j}=1+\ep_i'-\ep_i'=1,\rforal i\in \Lambda_-\,\,\,\,\,\,\hspace{0.23in}({\rm using}\eqref{neww2})\\
\eneq
In other words, for any $i,j$,
$$\sum_{i=1}^{l}d'_{i,j}=\sum_{j=1}^{l}d'_{i,j}=1.$$

Let $D'=(d_{i,j}')_{l\times l}$ and let $\phi_{D'}=\sum_{\sigma\in \Sigma_l}t_\sigma {\rm Ad}\, u_\sigma$
be induced  trace preserving completely positive linear maps in part (4) of Lemma \ref{lemA}.  View each $u_\sigma$ as a  unitary matrix in $M_l(\C \cdot 1_A)\subset
M_{l}(A),$ and
define
$$
\phi_{D'}\Big((c_{ij})\Big)\:= \sum_{\sigma\in \Sigma_l}t_\sigma  u_\sigma^*(c_{ij})u_\sigma
$$
for all $(c_{i,j})\in M_{l}(A).$
Note that,
\beq
\phi_{D'}\Big(\sum_{i=1}^{l} \mu_i p_i\Big)=\sum_{i=1}^{l}\lambda_i'p_i,
\eneq
where $(\lambda_1',\lambda_2',...,\lambda'_{l})$ is given by
\beq
(\lambda_1',\lambda_2',...,\lambda'_{l})^T=D'(\mu_1, \mu_2,...,\mu_{l})^T
\eneq

It follows from \eqref{43n-08-11}, \eqref{an3}    that {for any $i=1,2,...,l$,}
\beq
|\lambda_i-\lambda_i'| &\le &2\ep_1+\Big|\sum_{j=1}^{ l}\mu_jd_{i,j}-\sum_{j=1}^{l}\mu_jd'_{i,j}\Big|\\
&\le& 2\ep_1+\max_{1\le j\le {l}}\{|\mu_j|\}\Big(\sum_{i=1}^l|\ep_{i,j}|\Big)\\
&=&  2\ep_1+\max_{1\le j\le l}
\{|\mu_j|\}\Big(\sum_{i\in \Lambda_+}\ep_{i,j}+\sum_{i\in \Lambda_-}|\ep_{i,j}|\Big)\\
&\le & 2\ep_1+2\ep_2\|y\|.
\eneq
Consequently,
\beq
\|\phi_{D'}(y)-x\|\le \|\phi'(y)-x\|+\|\phi'(y)-\phi_{D'}(y)\|<3\ep_1+2 \ep_2\|y\|.
\eneq

In general case, there is a unitary $u\in A$ such that
$u^*q_iu=p_i,$ $i=1,2,...,l.$
Put $\phi'={\rm Ad}\, u\circ \phi$ and $x'=u^*xu.$
Then we can apply the above to obtain a trace preserving \cpc\, $\phi'_{D'}: A\to A$
such that
\beq
\|\phi'_{D'}(y)-x'\|<3\ep_1 +2\ep_2\|y\|.
\eneq
Let $\psi={\rm Ad}\, u^*\circ \phi'_{D'}.$  Then $\psi$ is trace preserving \cpc\, and
\beq
\|\psi(y)-x\|<3\ep_1+2\ep_2\|y\|.
\eneq

\end{proof}

\begin{lemma}\label{Ln04}
Let $A$ be a unital infinite dimensional  simple \CA\, with real rank zero and stable rank one and let
$p_1, p_2,...,p_{l}$ and $e$ be mutually orthogonal projections in $A.$
Let $x=x_1+x_2,$ where $x_1=\sum_{j=1}^{l} \lam_j p_j,$  and ${{x_2}}\in eAe$ be normal elements.
Suppose that $K>1$ is an integer such that
$(2K+5)[e]\le [p_j]$ ($1\le j\le l$)
and $\{\lam_1, \lam_2,...,\lam_l\}$ is $\eta$-dense in $\sp(x).$
Then, for any $\ep>0,$
$$
{\rm dist}(x, \conv(\uu(x_1))\le \frac{8\|x\|}{K} +\ep+\eta.
$$
\end{lemma}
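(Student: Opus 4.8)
The plan is to reduce, by cutting down to a corner and peeling off an inessential remainder, to a genuinely ``finite-dimensional over $eAe$'' computation, and then to deposit $x_2$ onto the small projection $e$ by a single unitary rotation whose disturbance is diluted over the many spare copies of each spectral value guaranteed by $(2K+5)[e]\le[p_j]$.

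First I would pass to the corner $B=(p+e)A(p+e)$ with $p=\sum_{j}p_j$; since $\conv(\uu_B(\cdot))\subseteq\conv(\uu(\cdot))$ it suffices to work in $B$, where $p+e=1_B$. Using $(2K+5)[e]\le[p_j]$ and stable rank one, inside each $p_j$ I carve $2K+5$ mutually orthogonal subprojections $e_{j,1},\dots,e_{j,2K+5}$ with $[e_{j,s}]=[e]$ and a remainder $p_j^{r}$, so that $p_j=p_j^{r}+\sum_s e_{j,s}$. Put $P=e+\sum_{j,s}e_{j,s}$ and $Q=\sum_j p_j^{r}$. On $Q$ one has $x=x_1=\sum_j\lam_j p_j^{r}$, so conjugation by unitaries of the form $u_P\oplus 1_Q$ leaves this part fixed and only acts on the $P$-part; hence a convex combination inside $PAP\cong M_n(eAe)$ ($n=1+l(2K+5)$) that approximates $x|_P$ yields, after adding back the common $Q$-part, the desired approximation of $x$. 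Thus it is enough to show that $x|_P=\diag(x_2,\lam_1^{(2K+5)},\dots,\lam_l^{(2K+5)})$ lies within $8\|x\|/K+\ep+\eta$ of $\conv(\uu_P(R))$, where $R=x_1|_P=\diag(0,\lam_1^{(2K+5)},\dots,\lam_l^{(2K+5)})$.

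The heart of the matter is that $x_2$ is an arbitrary normal element of $eAe$: by the $K_1$-obstruction of \cite{LinFU} it need not be a norm-limit of normal elements with finite spectrum, so I cannot replace $x_2$ by a diagonal element and invoke Horn's theorem (\ref{lemA}, \ref{corB}). I avoid this by using only a \emph{positive} partition of unity. Since $\{\lam_j\}$ is $\eta$-dense in $\sp(x)\supseteq\sp(x_2)$, choose $\rho_j\in C(\sp(x))_+$ with $\sum_j\rho_j=1$ and $\supp\rho_j\subseteq\{t:|t-\lam_j|<\eta+\ep\}$, and set $g_j=\rho_j(x_2)\in eAe$; then $g_j\ge 0$, $\sum_j g_j=e$, and $\|x_2-\sum_j\lam_j g_j\|\le\eta+\ep$. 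Now I define the first column of a unitary $U\in M_n(eAe)$ to have slot-$1$ entry $0$ and slot-$(j,s)$ entry $\tfrac{1}{\sqrt{2K+5}}\,w_{j,s}\sqrt{g_j}$, where $w_{j,s}\colon e\to e_{j,s}$ is a partial isometry. This column is an isometry because $\sum_{j,s}\tfrac{1}{2K+5}\sqrt{g_j}\,w_{j,s}^*w_{j,s}\sqrt{g_j}=\sum_j g_j=e$, and one computes the $(1,1)$-entry of $U^*RU$ to be $\sum_{j,s}\tfrac{1}{2K+5}\sqrt{g_j}\,\lam_j\,\sqrt{g_j}=\sum_j\lam_j g_j\approx_{\eta+\ep}x_2$. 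Completing this column to a unitary by the rotation of $\span\{e_1,\xi\}$ that is the identity on the orthogonal complement, the amplitude pulled from each copy $e_{j,s}$ is $O(1/\sqrt{2K+5})$, so each diagonal copy-entry of $U^*RU$ moves from $\lam_j$ by only $O(\|x\|/(2K+5))=O(\|x\|/K)$; this is exactly the dilution mechanism of Lemma \ref{Ln25}(1), adapted to the positive weights $g_j$.

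Finally I would remove the off-diagonal terms created by $U$ by symmetrizing: averaging $U^*RU$ over conjugation by the diagonal unitaries $\diag(z_1,\dots,z_n)$, with each $z_k$ running over the circle, exhibits the diagonal part of $U^*RU$ as a point of $\overline{\conv(\uu_P(R))}$. That diagonal part equals $\diag\bigl(\sum_j\lam_j g_j,\ \lam_j+O(\|x\|/K)\bigr)$, which is within $\eta+\ep+O(\|x\|/K)$ of $x|_P$; tracking the constants through the carving and the rotation gives the stated bound $8\|x\|/K+\ep+\eta$, and adjoining the untouched $Q$-part (where $x$ and $x_1$ coincide) via Lemma \ref{lem00} completes the argument. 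The main obstacle is precisely the one handled in the third paragraph: because $x_2$ carries no finiteness or $K$-theoretic hypothesis, any reduction to a diagonal (Horn) situation is impossible, and the entire transport of $x_2$ onto $e$ must be arranged so that only the positive, mutually commuting elements $g_j=\rho_j(x_2)$—never spectral projections of $x_2$—are ever used.
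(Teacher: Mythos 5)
Your argument is correct, but it takes a genuinely different route from the paper's. The paper attacks the obstruction you identify head-on: it first approximates $x_2$ by a normal element with one-dimensional spectrum (3.4 of \cite{FR}), then adjoins on a small projection $p_0$ with $[p_0]=[e]$ a normal element $x_0$ whose $K_1$-index data cancels that of $x_2$ (Lemma 3 of \cite{LinFU}), so that $x_0+x_2$ \emph{can} be approximated within $\ep+\eta$ by a finite-spectrum element $\sum_j\lam_jq_j$, and it then finishes with the dilution estimates of Lemma \ref{Ln25}(3). You bypass \cite{FR}, \cite{LinFU} and real rank zero entirely: the only comparison used is $(2K+5)[e]\le[p_j]$ to carve the subprojections $e_{j,s}$ and produce the partial isometries $w_{j,s}$, and $x_2$ is transported onto $e$ via the column $\xi=\sum_{j,s}(2K+5)^{-1/2}w_{j,s}\sqrt{g_j}$, which satisfies $\xi^*\xi=e$ and is therefore a genuine partial isometry, so your ``rotation'' $V=(P-e-\xi\xi^*)+\xi-\xi^*$ is a bona fide unitary of $PAP$ with first column $\xi$; the pinching onto the block diagonal can even be realized as a finite average over the sign unitaries $\pm e\oplus\bigoplus_{j,s}(\pm e_{j,s})$, so it lands in $\conv(\uu_P(R))$ without closure. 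The one place your write-up is thinner than it should be is the claim that each $(j,s)$-diagonal entry of $V^*RV$ moves by $O(\|x\|/(2K+5))$ rather than $O(\|x\|/\sqrt{2K+5})$: a priori the first-order cross term $e_{j,s}^*R(Ve_{j,s}-e_{j,s})$ carries only one factor of the amplitude $\|\xi^*e_{j,s}\|\le(2K+5)^{-1/2}$. It is rescued because $R$ vanishes on the $e$-slot and $e_{j,s}^*R\xi=\lam_j\,e_{j,s}\xi$ is itself of norm at most $\|x\|(2K+5)^{-1/2}$, so every correction term carries two small factors; writing this out gives a total error of at most $\eta+\ep+6\|x\|/(2K+5)$, comfortably inside the stated $8\|x\|/K+\ep+\eta$. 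In short, your approach buys elementarity and a weaker hypothesis set (no index theory and no finite-spectrum approximation of $x_2$ is ever needed), while the paper's approach keeps the proof inside the Horn/finite-spectrum framework of Lemmas \ref{lemA} and \ref{Ln25} that it reuses throughout the rest of the article.
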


\begin{proof}
Choose a projection $p_0\le p$ such that $[p_0]=[e].$
Note that, by 3.4 of \cite{FR}, since $A$ has stable rank one,
there exists a normal element $x_2'\in eAe$ such that  $\sp(x'_2)\subset \Gamma,$ where
$\Gamma$ is an one-dimensional  finite CW complex in the plane,
and $\|x_2'-x_2\|<\ep.$
Then, $\sp(x_2')=X_0\cup X_1,$ where $X_0\subset \C$ is a compact subset  with covering dimension
no more than 1 such that
$K_1(C(X_0))=\{0\}$ and where $X_1$ is an one-dimensional finite CW complex in the plane.
It follows from Lemma 3 of \cite{LinFU} that there exists $x_0\in {\cal N}(p_0Ap_0)$ with $\sp(x_0)=X_1$
such that,  for any $\lam\not\in\sp(x'_2),$
\beq
[\lam p_0-x_0]=-[\lam e-x'_2]\,\,\,{\rm in}\,\,\,K_1(A).
\eneq

Choose any mutually orthogonal projections $\{e_i:i=1,...,l\}\subset eAe$ such that $\sum_{i=1}^le_i=e.$

Choose $p_i'\le p_i$ such that $[p_i']=[e_i],$ $i=1,2,...,l.$
Define $x_3'=\sum_{j=1}^l \lam_jp_j'$ and $x_3=\sum_{j=1}^l \lam_j(p_j-p_j').$
We may write
\beq
x=x_3'+x_3+x_2.
\eneq
Note that, since $A$ has stable rank one,  one computes that
\beqq\label{neww5}
[p_j-p_j']= [p_j]-[p_j']\ge (2K+5)[e]-[e]=(2K+4)[e]\ge (2K+4)[p_j'],
\eneqq
$j=1,2,...,l.$
It follows from (3) of  \ref{Ln25}  and \ref{lem00} that
\beq
x\in_{4\|x\|/2K}  \conv(\uu(x_0+x_3+x_2)).
\eneq
Since $\lam (p_0+e)-(x_0+x_2)\in {\rm Inv}_0((p_0+e)A(p_0+e))$ for all $\lam\not\in \sp(x_2),$
by \cite{LinFU}, \wilog, we may assume
that
\beq
\|(x_0+x_2)-x_4\|<\ep+\eta,
\eneq
where $x_4=\sum_{j=1}^l \lam_j q_j,$ and    $\{q_1,q_2,...,q_l\}$ is a set of mutually orthogonal projections
in $(p_0+e)A(p_0+e).$

By \eqref{neww5},
$$(K+2)[q_i]\le(K+2)[p_0+e]=(2K+4)[e]\le [p_i-p_i'],i=1,...,l,$$
Thus, by part (3) of \ref{Ln25} (note $x_1=x_3'+x_3$),
\beq
x_4+x_3\in_{\frac{4\|x\|}{K}} \conv(\uu(x_1)).
\eneq
We conclude that
\beq
x\in_{\frac{8\|x\|}{K}+\ep+\eta} \conv(\uu(x_1)).
\eneq
\end{proof}

\begin{theorem}\label{alpha}
Let $A$ be  a unital  separable simple  $C^*$-algebra with tracial rank zero.
 Then  for any normal elements $x,y\in A$,  $x\in\overline{\conv(\uu(y))}$ if and only  if  there exists a sequence of  unital completely positive linear  maps $\psi_n: A\to A$ such that
\beqq\label{alpha-1}
&&\lim_{n\to\infty}\|\psi_n(y)-x\|=0\tand\\\label{alpha-2}
&&\lim_{n\to\infty}\sup\{|\tau(\psi_n(a))-\tau(a)|: \tau\in T(A)\}=0\tforal a\in A.
\eneqq
\end{theorem}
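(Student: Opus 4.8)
The statement is an equivalence whose forward implication is essentially immediate and whose reverse implication carries all the weight. For the \emph{only if} direction I would simply invoke Proposition \ref{PPP}: if $x\in\overline{\conv(\uu(y))}$, it produces trace preserving \cpc s $\Phi_n\colon A\to A$ with $\|\Phi_n(y)-x\|\to 0$. Since each $\Phi_n$ satisfies $\tau(\Phi_n(a))=\tau(a)$ for every $\tau\in T(A)$ and every $a\in A$, the supremum in \eqref{alpha-2} is identically $0$, so $\psi_n=\Phi_n$ furnishes both required limits and nothing more is needed.

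For the \emph{if} direction the plan is to fix $\ep>0$ and manufacture a single element of $\conv(\uu(y))$ within $\ep$ of $x$. First I would choose a large integer $K$ and a small $\dt>0$, both calibrated against $\ep,\|x\|,\|y\|$ only at the very end, and apply Lemma \ref{Csec3for4} to obtain a finite dimensional $B\subset A$ with unit $p$ and decompositions $x\approx_\dt x'+x''$, $y\approx_\dt y'+y''$, where $x'=\sum_i\lam_i p_i$ and $y'=\sum_j\mu_j q_j$ have finite spectrum, $\{\lam_i\}$, $\{\mu_j\}$, $\sp(x'')$, $\sp(y'')$ are all $\dt$-dense in the appropriate spectra, and $K[1-p]\le[p_i]$, $K[1-p]\le[q_j]$. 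Because these comparisons hold for \emph{all} $\tau\in T(A)$, the small corner $(1-p)A(1-p)$ can be absorbed: Lemma \ref{Ln04} gives that $x$ lies within $O(1/K)+\dt$ of $\conv(\uu(x'))$, while Lemma \ref{Ln25} lets me pass from $\conv(\uu(y'))$ back into $\conv(\uu(y))$ up to an $O(1/K)$ error. By the transitivity Lemma \ref{lem00} the problem then reduces to proving $x'\in_{O(1/K)+\dt}\conv(\uu(y'))$ inside the corner $pAp$.

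The heart of the matter is this finite dimensional inclusion. Here I would first refine the partition produced by \ref{Csec3for4} so that all the projections $p_i,q_j$ become mutually equivalent and equal in number, padding with repeated eigenvalues, which is harmless since only $\dt$-density is required. This refinement is the device that eliminates the dependence on $\tau$: once every projection carries the same trace, the relevant majorization $\lam\prec\mu$ becomes a purely combinatorial condition on the eigenvalue vectors. To produce the corresponding $D\in\dd_l$ I would take $n$ so large that $\|\psi_n(y)-x\|<\dt$ and the supremum in \eqref{alpha-2} is below $\dt$ on the finite set $\{q_j\}$ and on $p$, and then imitate Lemma \ref{Ln43}: compressing by the $p_i$ to define entries $d_{ij}$ from $\psi_n(q_j)$, reading off from unitality of $\psi_n$ that the columns sum to $1$, from the uniform trace estimate \eqref{alpha-2} that the rows sum to approximately $1$, and from $\psi_n(y')\approx x'$ that $\lam_i\approx\sum_j d_{ij}\mu_j$. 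A Riesz interpolation adjustment, exactly as in the proof of \ref{Ln43}, converts this approximately doubly stochastic array into a genuine $D\in\dd_l$ with $\lam\approx D\mu$. Finally Birkhoff's theorem \cite{GB} together with Corollary \ref{corB}(1) realizes $\lam\prec\mu$ as $x'\in_{O(\dt)}\conv(\uu(y'))$, the permutation unitaries lying in $M_l(\C)\subset pAp$; mutual equivalence makes this inclusion insensitive to the trace used.

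The main obstacle is precisely the possibly large tracial simplex. The map $\psi_n$ supplies, a priori, only the trace dependent quantities $\tau(p_i\psi_n(q_j)p_i)$, whereas membership in $\conv(\uu(y'))$ via \ref{corB} demands one doubly stochastic matrix valid simultaneously for every trace. My mechanism for reconciling these is the reduction to mutually equivalent projections, after which the majorization is combinatorial, a single trace suffices to compute $D$, and the conclusion holds for all traces at once; the uniform form of \eqref{alpha-2} is exactly what forces the row sums to come out right, and the all-trace comparisons $K[1-p]\le[p_i],[q_j]$ are what legitimize the corner absorption in \ref{Ln04} and \ref{Ln25}. I expect the two genuinely delicate points to be the balancing of the finite dimensional partition into mutually equivalent, equinumerous blocks without destroying $\dt$-density, and the bookkeeping that keeps all the $O(1/K)$ and $O(\dt)$ errors under simultaneous control, so that after fixing $K$ large and then $\dt$ small their total falls below $\ep$.
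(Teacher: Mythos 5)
Your forward direction is exactly the paper's (Proposition \ref{PPP}), and the outer skeleton of your reverse direction---finite dimensional approximation via Lemma \ref{Csec3for4}, corner absorption via Lemmas \ref{Ln25} and \ref{Ln04}, transitivity via Lemma \ref{lem00}, and a Lemma \ref{Ln43}-style doubly stochastic construction at the finite dimensional level---also matches the paper. The gap is in your central mechanism for coping with a rich tracial simplex. You propose to refine the partition so that all the $p_i$ and $q_j$ become mutually equivalent and equal in number, after which ``a single trace suffices to compute $D$.'' This is not achievable in general: the approximating algebra $B=\bigoplus_k M_{r_k}$ has several blocks, minimal projections of distinct blocks need not be equivalent in $A$, and their traces can vary incommensurably over $T(A)$. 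If $e,f$ are minimal projections of two blocks with $\tau_1(e)/\tau_2(e)\ne \tau_1(f)/\tau_2(f)$, then no subdivision of $e$ and $f$ into mutually equivalent pieces with uniformly small remainder exists, because the number of copies of a common subprojection that fit would have to depend on the trace. Mutual equivalence can only be arranged within a single block, so the majorization argument must be run block by block, which is what the paper does.

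Once you work block by block, the genuine difficulty surfaces, and your proposal does not address it: hypothesis \eqref{alpha-2} controls $|\tau(\psi_n(a))-\tau(a)|$ only for $\tau\in T(A)$, whereas the Lemma \ref{Ln43} argument on a block $M_{R(r)}$ needs $|t(L^{(r)}(q_{k,r}))-t(q_{k,r})|$ to be small for the trace $t$ of that block, and the extreme traces of a finite dimensional subalgebra of $A$ are not restrictions of traces of $A$. So the ``uniform form of \eqref{alpha-2}'' does not, by itself, force your row sums to be close to $1$. The paper's proof devotes its technical core to exactly this transfer: since $\tau(\phi_3(q_{k,0}))\approx\tau(q_{k,0})$ for all $\tau\in T(A)$, Lemma \ref{Lcp} (Cuntz--Pedersen) produces elements $x_{i,k}$, $y_{i,k}$ with $\sum_i x_{i,k}^*x_{i,k}\approx\phi_3(q_{k,0})$ and $\sum_i x_{i,k}x_{i,k}^*\approx q_{k,0}$; these algebraic identities are then pushed by an almost multiplicative map into a second finite dimensional subalgebra $B_1$ (obtained from a second application of tracial rank zero, together with projectivity of $B_0$), where they become trace estimates valid for \emph{every} trace of $B_1$, and only then is Lemma \ref{Ln43} applied in each matrix block. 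Without some such device, the step from \eqref{alpha-2} to an approximately doubly stochastic matrix does not go through, and the proof as proposed does not close.
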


\begin{proof}
Let  $x,y
\in \nnn(A)$ and let $\{\psi_n\}: A\to A$ be a sequence
of unital \cpc s which satisfies \eqref{alpha-1} and \eqref{alpha-2}.

\Wlog,  we may assume that $\|x\|, \|y\|\le 1.$  By replacing $y$ by $y-\lambda$ for some $\lambda\in \sp(y)$ and $x$ by $x-\lambda,$
\wilog, we may assume that $0\in \sp(y).$

Fix $\ep>0.$
Put $\ep_0=\ep/2^{14}$ and an integer $K$ such that $0<12/K<\ep_0.$

It follows from  \ref{Csec3for4} that there exists a finite dimensional \SCA\, $B$ of $A$ with $p_0=1_B$ and there are $x_0,y_0\in \nnn({{B}})$, $x_0',y_0'\in\nnn((1-p_0)A(1-p_0))$ such
\beqq
x\approx_{\ep_0} x_0+x_0',\,\,\,\,
y\approx_{\ep_0} y_0+y_0',
\eneqq
where
$$
x_0=\sum_{i=1}^h\lam_ip_{i,0},\,\,\,
y_0=\sum_{k=1}^N{\mu_k}q_{k,0},
$$
where $\{\lam_1,...,\lam_h\}$ and $\{{\mu_1},...,{\mu_N}\}$ are  $\ep_0$-dense in $\sp(x)$ and $\sp(y)$ respectively,
${\mu_1}=0,$
and  $p_{i,0},q_{k,0}\in B$ are projections satisfying condition:
\beqq
&&\sum_{i=1}^hp_{i,0}=\sum_{k=1}^Nq_{k,0}=p_0\andeqn\\
&&\hspace{-0.5in}(2K+5)[1-p_0]\le [p_{i,0}],\,\,\, i=1,2,...,h
\andeqn\\
&&  (2K+5)[1-p_0]\le [q_{k,0}], k=1,2,...,N.
\eneqq
Put $q_{0,0}=1-p_0$ and
\beqq
s_0=\inf\{\tau(q_{k,0}): \tau\in T(A), \,\,\, k=0,1,...,N\}>0.
\eneqq

It follows from (2) of \ref{Ln25} that
there is a trace preserving unital \cpc\,  $\phi_1: A\to A$
such that
\beqq\label{F44-09-1}
\|\phi_1(y)-y_0\|<\ep_0+1/2K<2\ep_0  \andeqn y_0\in_{2\ep_0}\conv(\uu(y)).
\eneqq

It follows from \ref{Ln04} that
there is a second trace preserving unital \cpc\, $\phi_2: A\to A$
such that
\beqq\label{F44-09-2}
\|\phi_2(y_0)-y\|<\frac {8\|y\|}K+2\ep_0<3\ep_0,
\eneqq

Put $B_0=\C(1-p_0)\oplus B.$
Let  ${\cal P}_{B_0}$ be the set of all non-zero projections in $B_0.$
There are only finitely many unitary equivalence  classes of projections in ${\cal P}_{B_0}.$
Put
\beq
s_{00}=\inf\{\tau(e): \tau\in T(A)\}\andeqn e\in {\cal P}_{B_0}>0.
\eneq
Put
\beq
\ep_1=(s_{00}/64NK)\ep_0.
\eneq
Note that the unit ball of $B_0$ is compact.

Choose large $n_0$ so that
\beqq
\|\psi_{n_0}\circ \phi_2(y_0)-x\|<4\ep_0\andeqn
\eneqq
and for all $b\in B_0$,
\beqq\label{3m-n1}
\sup\{|\tau(\psi_{n_0}\circ \phi_2(b))-\tau(\phi_2(b))|:\tau\in T(A)\}\le (\ep_1/4)\|b\|.
\eneqq
Put $\phi_3=\psi_{n_0}\circ \phi_2.$ Since $\phi_2$ is trace preserving,  by \eqref{3m-n1},
\beqq
\sup\{|\tau(\phi_3(b))-\tau(b)|:\tau\in T(A)\}\le (\ep_1/4)\|b\|\rforal  b\in B_0.
\eneqq
We also have
\beqq\label{F44-09-3}
\|\phi_3(y_0)-x\|<4\ep_0,
\eneqq

There are
$x_{i,k}, y_{i,k}\in A$ such that
\beqq\label{trace-n1}
\Big\|\phi_3(q_{k,0})-\sum_{i=1}^{m(k)}x_{i,k}^*x_{i,k}\Big\|<\ep_1,\,\,\,
\Big\|\sum_{i=1}^{m(k)}x_{i,k}x_{i,k}^*-q_{k,0}\Big\|<\ep_1,\\\label{trace-n2}
\Big\|\sum_{i=1}^{m(k)'} y_{i,k}^*y_{i,k}-q_{k,0}\Big\|<\ep_1\andeqn
\sum_{i=1}^{m(k)'}y_{i,k}y_{i,k}^*\ge s_0\cdot 1_A,
\eneqq
$k=0,1,2,...,N.$
by

Note that we may assume, \wilog, (by splitting $x_{i,k}, y_{i,k}$ into  more terms), that  $\|x_{i,k}\|, \,\|y_{i,k}\|\le 1$ for all $i$ and $k.$

Put
\beq
{\cal F}_1=\{1_A, y_0,p_0,p_{1,0},...,p_{h,0};q_{0,0} q_{1,0},...,q_{N,0}\}\cup {\cal F}_0\cup\{\phi_3(y_0)\},
\eneq
where
$$
{\cal F}_0=\{x_{i,k}, x_{i,k}^*: 1\le i\le m(k), 0\le k\le N\}\cup \{y_{i,j}, y_{i,k}^*: 1\le i\le m(k)',0\le k\le N\}.
$$
Put $M_{00}=\max\{m(k), m(k)':  k=0,1,...,N\}.$

Let  $\ep_2=\ep_1/(2^{10}(M_{00}+1)NK).$
Choose a finite subset ${\cal F}_{B_0}$ of the unit ball of $B_0$
which is $\ep_2/4$-dense.
Since $B_0$ is projective, choose $\dt_0>0$ and a finite subset ${\cal G}_{B_0}$
such that, for  any unital ${\cal G}_{B_0}$-$\dt_0$-multiplicative \cpc\, $L'$ from $B_0$  to a
unital \CA\, $A',$ there exists a unital \hm\, $h': B_0\to A'$ such that
\beq
\|h'(b)-L'(b)\|<\ep_2/4\rforal b\in {\cal F}_{B_0}.
\eneq
Consequently,
\beq
\|h'(b)-L'(b)\|\le (\ep_2/2)\|b\|\rforal b\in B_0.
\eneq
We may assume that ${\cal G}_{B_0}$ contains a generating
set of $B_0.$

Put
$$
{\cal F}_2={\cal F}_1\cup {\cal G}_{B_0}\cup\{x, x'\}.
$$

Choose a large finite subset ${\cal F}_3$ of $A$ such that
for any non-zero element $z\in {\cal F}_2,$ there are $a_{z,i}, b_{z,i}\in {\cal F}_3$ such
that
\beqq\label{F44-n-100}
\sum_{i=1}^{r(z)} a_{z, i}zb_{z, i}=1_A.
\eneqq
Put $M_1=2\max\{\{r(z)\max\{\|a_{z, i}\|+\|b_{z,i}\|: 1\le i\le r(z)\}: z\in {\cal F}_2\setminus \{0\}\}.$
Let $\ep_3=\min\{\ep_2/M_1, \dt_0/M_1\}.$

By the virtue of \ref{Csec3for4}, there is   a finite dimensional \SCA\, $B_1$ with $1_{B_1}=p$ such
that
\beqq\label{F44-n-10-1}
&&\|pz-zp\|<\ep_3\rforal z\in {\cal F}_3,\\\label{F44-n-10}
&&{\rm dist}(pzp, B_1)<\ep_3\rforal z\in {\cal F}_3
\andeqn\\
&&\sup\{\tau(1-p): \tau\in T(A)\}<\ep_3.
\eneqq
as well as   ${\bar x}_1, y_1\in {\cal N}(B_1)$ ${\bar x}_2, y_2\in {\cal N}((1-p)A(1-p)),$
projections $q_{i,1}, p_{i,1}\in B_1$ and $q_{i,2}\in (1-p)A(1-p)$
such that
\beqq
&& pxp\approx_{\ep_3} {\bar x}_1,\,\, (1-p)x(1-p)\approx_{\ep_3} {\bar x}_2,\\\label{F44-n-10+0}
&& py_0p\approx_{\ep_3} y_1,\,\, (1-p)y_0(1-p)\approx_{\ep_3} y_2,\\\label{F44-n-10+1}
&& y_1=\sum_{i=1}^N\mu_iq_i,\,\,
{\bar x}_1=\sum_{j=1}^{N'} {{\lambda'_j}}p_j,\,\,
y_2=\sum_{i=1}^N\mu_i q_i'\\
&& q_{i0}\approx_{\ep_3} q_{i}+q_i',\,\,\,{{i=0,1,...,N}}\\
&&{{q_0+\sum_{i=1}^Nq_i=\sum_{i=1}^{N'}p_i=p,}}\\\label{F44-n-15+12}
&&\hspace{-0.3in}(2K+5)[1-p]\le [q_i]\andeqn (2K+5)[1-p]\le [p_j]\\
&&\rforal 0\le i\le N, \,\, 1\le j\le N',
\eneqq
where $\{{{q_0}}, q_1,q_2,...,q_N\}$  and
$\{p_1, p_2,...,p_{N'}\}$ and
$\{{{q_0}}, q_1',q_2',...,q_N'\}$ are mutually orthogonal projections   and
$\{\lam_1', \lam_2', ...,\lam_{N'}'\}$ is $\ep_3$-dense in $\sp(x).$
We may write
$B_1=M_{R(1)}(\C)\bigoplus M_{R(2)}\bigoplus \cdots \bigoplus M_{R(k_0)}$
and let $\pi_r: B_1\to M_{R(r)}(\C)$ be the quotient map, $r=1,2,...,k_0.$

Moreover, by the choice of $\dt_0,$ we may assume that there exists an isomorphism
$h_0: B_0\to B_1$ such that
\beqq\label{F44-n-9}
\|h_0(b)-pbp\|<\ep_2\|b\|\rforal b\in B_0
\eneqq
Moreover, by \eqref{F44-n-100} and the choice of  ${\cal G}_{B_0}$ and $\ep_3,$ we may assume
that $\pi_r\circ h_0$ is an isomorphism for each $r.$

Furthermore, we may assume that $y_1=h_0(y_0),$ $q_k=h_0(q_{k,0}),$
$k=1,2,...,N.$

Let $C_r=\pi_r(h_0(B_0)),$ $r=1,2,...,k_0.$
There is a conditional  expectation $E_r: M_{R(r)}\to C_r.$
Define $L^{(r)}: M_{R(r)}\to M_{R(r)}$ by
\beq
L^{(r)}=\pi_r\circ h_0\circ \phi_3\circ (\pi_r\circ h_0)^{-1}\circ E_r,\,\,\,
r=1,2,...,k_0.
\eneq  Define $L: B_1\to B_1$ by
$L(a_1\oplus a_2\oplus\cdots \oplus a_{k_0})=\bigoplus_{r=1}^{k_0}L^{(r)}(a_r)$
for $a_1\oplus a_2\oplus\cdots \oplus a_{k_0}\in B_1.$

Note that
\beqq\label{neww8}
&&\hspace{-0.3in}L(q_k)=
h_0\circ \phi_3(h_0^{-1}(q_k))=h_0\circ \phi_3(q_{k,0})\andeqn\\
&&\hspace{-0.3in}L(y_1)=
h_0\circ \phi_3(h_0^{-1}(y_1))=h_0\circ \phi_3(y_0).
\eneqq
We have
\beqq
\|L(y_1)-{\bar x}_1\| &\le& \|h_0(\phi_3(y_0))-p\phi_3(y_0)p\|+\|p\phi_3(y_0)p-pxp\|\\\label{F44-n-101}
&& +\|pxp-{\bar x}_1\|<\ep_2+4\ep_0+\ep_3.
\eneqq
By  \eqref{F44-n-15+12}, and applying  \ref{Ln04},
\beqq\label{aq1}
x\in\conv_{\eta_0}(\uu({\bar x}_1)),
\eneqq
where $\eta_0=\frac{8\|x\|}K+4\ep_3<\ep/4.$

For each $z\in {\cal F}_3,$ by \eqref{F44-n-10}, there is $L(z)\in B_1$ such that
\beqq\label{F44-n-20}
\|pzp-L(z)\|<\ep_3\andeqn \|pz^*p-L(z)^*\|<\ep_3.
\eneqq

{{
By \eqref{F44-n-9},\eqref{trace-n1}, \eqref{trace-n2},\eqref{F44-n-10-1} and \eqref{F44-n-20},
\beqq
&&L(q_k)=h_0\circ \phi_3(q_{k,0})\approx_{\ep_2} p\phi_3(q_{k,0})p
\approx_{\ep_1} \sum_{i=1}^{m(k)}px_{i,k}^*x_{i,k}p\\\label{F44-n-20000}
&&\approx_{m(k)\ep_3}\sum_{i=1}^{m(k)}px_{i,k}^*px_{i,k}p
\approx_{4m(k)\ep_3}\sum_{i=1}^{m(k)}L(x_{i,k})^*L(x_{i,k}).
\eneqq
Similarly
\beqq\label{F44-n-201}
&&q_{k}\approx_{\ep_3}pq_{k,0}p\approx_{\ep_1}
\sum_{i=1}^{m(k)}px_{i,k}x_{i,k}^*p
\approx_{5m(k)\ep_3}\sum_{i=1}^{m(k)}L(x_{i,k})L(x_{i,k})^*,
\\\label{F44-n-202}
&&pq_{k,0}p\approx_{\ep_1}\sum_{i=1}^{m(k)'}py^*_{i,k}y_{i,k}p
\approx_{5m(k)'\ep_3}\sum_{i=1}^{m(k)'}L(y_{i,k})^*L(y_{i,k})\andeqn\\\label{F44-n-203}
&&\sum_{i=1}^{m(k)'}L(y_{i,k})L(y_{i,k})^*\approx_{5m(k)'\ep_3}
\sum_{i=1}^{m(k)'}py_{j,k}y^*_{i,k}p\ge s_0p.
\eneqq
Note that
\beq
10m(k)\ep_3+10m(k)'\ep_3+\ep_2+3\ep_1<s_0/8.
\eneq
By \eqref{F44-n-20000}, \eqref{F44-n-201},\eqref{F44-n-202} and \eqref{F44-n-203},
we have
\beqq\label{F44-n-102}
&&t(L(q_k))\ge s_0/2\rforal t\in T(B_1),k=0,1,....,N,\\\label{F44-n-102+1}
&&|t(L(q_k))-t(q_k)|<(s_0/4)\ep_0,\rforal t\in T(B_1),k=0,1,....,N.
\eneqq}}

Define $\psi_r: M_{R(r)}(\C)\to  M_{R(r)}(\C)$ by $\psi_r=\pi_r\circ L|_{M_{R(r)}},$
$r=1,2,...,k_0.$
Put
\beq
&&y_{1,r}=\pi_r(y_1),\,\, x_{r}=\pi_r({\bar x}_1),r=1,2,...,k_0,\\
&&p_{i,r}=\pi_r(p_i),\,\,\,q_{k,r}=\pi_r(q_k),\,\,i=1,...,{ {N'}},\,\,k={{0}},1,...,N,\,r=1,2,...,k_0.
\eneq
Note that $y_{1,r}=\sum_{k=1}^N \mu_i q_{k,r},$ $r=1,2,...,k_0.$
For each $r,$  by \eqref{F44-n-101},
\beqq
\|L^{(r)}(y_{1,r})-x_r\|<5\ep_0+3\ep_3.
\eneqq
By \eqref{F44-n-102} and \eqref{F44-n-102+1},  for $t\in T(M_{R(r)}),$
\beqq
&&{{\max\{|t(L^{(r)}(q_{k,r}))-t(q_{k,r})|:0\le k\le N\}}}<{{(s_0/4)}}\ep_0\andeqn\\
&&\min\{t(q_{k,r}): 0\le k\le N\}\ge (s_0/2).
\eneqq
{{Let $Q_{1,r}=q_{0,r}+q_{1,r},Q_{k,r}=q_{k,r},k=2,...,N$, then}}
\beqq
&&{{\max\{|t(L^{(r)}(Q_{k,r}))-t(Q_{k,r})|:1\le k\le N\}<(s_0/2)\ep_0,}}\\
&&{{\min\{t(Q_{k,r}): 1\le k\le N\}\ge (s_0/2),}}
\eneqq
{{and  $y_{1,r}=\sum_{i=1}^N\mu_iQ_{k,r}$.}}
Applying \ref{Ln43}, there is, for each $r,$ a trace preserving \cpc\, $\Phi_r
: M_{R(r)}(\C)\to  M_{R(r)}(\C)$
such that
\beqq
&&\|\Phi_r(y_{1,r})-x_{r}\|<  2\ep_0\|y\|   +(5\ep_0+3\ep_3)<\ep/4\\
&&x_{r}\in_{\ep/4} \conv(\uu(y_{1,r})).
\eneqq

Applying this to each $r,$ we obtain a trace preserving \cpc\, $\Phi: B_1\to B_1$ such
that
\beqq\label{aq2}
\|\Phi(y_1)-{\bar x}_1\|<\ep/4\andeqn {\bar x}_1\in_{\ep/4}\conv(\uu(y_1)).
\eneqq

By part (2) of  \ref{Ln25} (recall that $\mu_1=0$),
\beqq\label{aq3}
y_1\in_{\eta_2}\conv(\uu(y_1+y_2)),\,\,{\rm where}\,\,\eta_2=\frac{\|y''_0\|}{K+1}<\ep_0.
\eneqq
By \eqref{F44-n-10-1},\eqref{F44-n-10+0}, \eqref{F44-n-10+1} and \eqref{aq3}
\beqq
y_1\in_{\eta_2+2\ep_3} \conv(\uu(y_0)).
\eneqq
It follows (by also \eqref{F44-09-1}) that
\beqq\label{F44-nf-1}
y_1\in_{\eta_2+2\ep_3+2\ep_0}\conv(\uu(y)).
\eneqq
It follows from \eqref{F44-nf-1}, \eqref{aq2} and \eqref{aq1} that
\beq
x\in_{\ep} \conv(\uu(y)).
\eneq
\end{proof}

\begin{corollary}\label{beta}
Let $A$ be  a unital separable simple  $C^*$-algebra with
tracial rank zero
and let $x,\, y\in A$ be two normal elements.
 Then   the following are equivalent:

 {\rm (1)} $x\in\overline{\conv(\uu(y))};$

 {\rm (2)} there exists a sequence of  unital completely positive maps $\psi_n: A\to A$ such that
\beq
&&\lim_{n\to\infty}\|\psi_n(y)-x\|=0{{\tand}}\\
&&\tau(\psi_n(a))=\tau(a)\rforal a\in A{{\tand\tforal \tau\in T(A);\tand}}
\eneq

 {\rm (3)}  there exists a sequence of  unital completely positive maps $\psi_n: A\to A$ such that
\beqq\label{alpha-1+}
&&\lim_{n\to\infty}\|\psi_n(y)-x\|=0\tand\\\label{beta-2+}
&&\lim_{n\to\infty}\sup\{|\tau(\psi_n(a))-\tau(a)|: \tau\in T(A)\}=0\tforal a\in A.
\eneqq
\end{corollary}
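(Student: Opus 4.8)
The plan is to close the cycle of implications $(1)\Rightarrow(2)\Rightarrow(3)\Rightarrow(1)$, exploiting that the substantive analytic work has already been carried out in Proposition~\ref{PPP} and Theorem~\ref{alpha}; what remains is essentially bookkeeping to align the three formulations and to thread the exact versus approximate trace conditions correctly.

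For $(1)\Rightarrow(2)$ I would invoke Proposition~\ref{PPP} directly. Starting from $x\in\overline{\conv(\uu(y))}$, that proposition produces maps of the form $\Phi_n(a)=\sum_{i=1}^{r(n)}\lambda_{i,n}u_{i,n}^*au_{i,n}$, with $\sum_i\lambda_{i,n}=1$ and $u_{i,n}\in U(A)$, satisfying $\lim_n\|\Phi_n(y)-x\|=0$ and $\tau\circ\Phi_n=\tau$ for every $\tau\in T(A)$. The one point to verify is unitality: since $\Phi_n(1_A)=\sum_i\lambda_{i,n}u_{i,n}^*u_{i,n}=\sum_i\lambda_{i,n}1_A=1_A$, each $\Phi_n$ is a unital completely positive map, and the exact trace identity is precisely the requirement recorded in (2).

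The implication $(2)\Rightarrow(3)$ is immediate: if $\tau(\psi_n(a))=\tau(a)$ for all $a\in A$ and all $\tau\in T(A)$, then $\sup\{|\tau(\psi_n(a))-\tau(a)|:\tau\in T(A)\}=0$ for each $n$, so the limit in \eqref{beta-2+} is trivially $0$ and \eqref{alpha-1+} is just \eqref{alpha-1}. Finally, $(3)\Rightarrow(1)$ is exactly the content of Theorem~\ref{alpha}: the hypotheses \eqref{alpha-1+} and \eqref{beta-2+} coincide verbatim with \eqref{alpha-1} and \eqref{alpha-2}, and that theorem asserts that the existence of such approximately trace-preserving unital completely positive maps is equivalent to $x\in\overline{\conv(\uu(y))}$.

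Consequently there is no genuine obstacle here; the corollary is a clean repackaging of the two preceding results, and the only mild subtlety is arranging the chain so that Proposition~\ref{PPP} supplies the \emph{strong} exact form demanded in (2) while Theorem~\ref{alpha} consumes only the \emph{weak} approximate form (3), with the trivial implication $(2)\Rightarrow(3)$ mediating between them. All the genuine difficulty has been absorbed into Theorem~\ref{alpha}, namely the finite-dimensional approximations of \ref{Csec3for4}, the index-theoretic spectral adjustments of \ref{Ln04}, and the doubly stochastic matrix and Riesz-interpolation construction of \ref{Ln43}.
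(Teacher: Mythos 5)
Your proposal is correct and follows essentially the same route as the paper: the authors likewise obtain $(1)\Rightarrow(2)$ from Proposition \ref{PPP}, note that $(2)\Rightarrow(3)$ is trivial, and use Theorem \ref{alpha} for the equivalence of $(1)$ and $(3)$. Your check that the averaging maps $\Phi_n(a)=\sum_i\lambda_{i,n}u_{i,n}^*au_{i,n}$ are unital is a harmless extra detail already implicit in Proposition \ref{PPP}.
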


\begin{proof}

We have established that (1) and (3) are equivalent.
It is also clear that (2) implies (3), and (1) implies (2) follows from \ref{PPP}.

\end{proof}

\section{Normal elements  with small boundaries}

The following follows from \cite{Linalm}.

\begin{lemma}[\cite{Linalm}]\label{LLin}
Let $X$ be a compact subset of the plane,
 $\ep>0$ and let ${\cal F}\subset C(X)$ be a finite subset.
There is $\dt>0$ and a finite subset ${\cal G}\subset C(X)$ satisfying the following:
If $\phi: C(X)\to F$ is a ${\cal G}$-$\dt$-multiplicative \cpc, where $F$ is a finite dimensional \CA,
then there exists unital \hm\, $h: C(X)\to F$ such
that
\beq
\|\phi(f)-h(f)\|<\ep\rforal f\in {\cal F}.
\eneq

\end{lemma}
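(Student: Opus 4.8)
The plan is to use that $C(X)$ is singly generated as a unital \CA\ by the coordinate function $\iota\colon z\mapsto z$, a normal element with $\sp(\iota)=X$; a unital \hm\ $h\colon C(X)\to F$ is then precisely the datum of a normal $a=h(\iota)\in F$ with $\sp(a)\subset X$, recovered through functional calculus by $h(f)=f(a)$. (We may assume $\phi$ is unital, as in the intended applications, so that the constant functions cause no trouble.) Thus the task is to manufacture, out of an almost multiplicative $\phi$, a normal $a\in F$ with $\sp(a)\subset X$ and $a\approx\phi(\iota)$, and then to check that $f\mapsto f(a)$ approximates $\phi$ on ${\cal F}$.

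First I would fix the combinatorial data. By Stone--Weierstrass, choose for each $f\in{\cal F}$ a polynomial $p_f$ in $\iota$ and $\iota^*$ with $\sup_X|f-p_f|<\ep/4$. Let $R$ bound $X$; for a small $\eta>0$ (to be fixed last), take a finite $\eta$-net $\{\lambda_1,\dots,\lambda_M\}$ of the compact collar $\{w\in\cc:\eta\le\dist(w,X)\le R+1\}$, on which every resolvent $g_k=(\iota-\lambda_k)^{-1}$ is a genuine element of $C(X)$. Put into ${\cal G}$ the elements $1$, $\iota$, $\iota^*$, all monomials occurring in the $p_f$, and the $g_k$, and choose $\dt$ small relative to $\ep$, $\eta$, $M$, and the degrees and number of monomials involved.

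Next I would extract the consequences of ${\cal G}$-$\dt$-multiplicativity for $b:=\phi(\iota)$. Since $\phi$ is self-adjoint and $\iota^*\iota=\iota\iota^*$, almost multiplicativity yields $\|b^*b-bb^*\|<c_1\dt$; and from $\phi(\iota-\lambda_k)\,\phi(g_k)\approx\phi(1)=1$ each $b-\lambda_k$ is approximately invertible, which forces $\sp(b)$ to lie within $2\eta$ of $X$. Here finite-dimensionality enters decisively: $F$ is a finite direct sum of matrix algebras, so by the perturbation theorem for almost normal elements that is uniform in the matrix size---this is the content of \cite{Linalm}---there is a genuinely normal $a_0\in F$ with $\|a_0-b\|$ as small as desired. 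Writing its spectral decomposition $a_0=\sum_j\mu_je_j$ (with $\sum_je_j=1_F$ automatically) and replacing each $\mu_j$ by a nearest point $\mu_j'\in X$ gives a normal $a=\sum_j\mu_j'e_j$ with $\sp(a)\subset X$; since $\sp(a_0)$ is close to $\sp(b)$ and hence within $\approx 2\eta$ of $X$, while relocating eigenvalues of a diagonal element changes it in norm by at most the largest displacement, we obtain $\|a-b\|$ small. No $K_1$-obstruction intervenes, as $K_1(F)=0$.

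Finally I would set $h(f)=f(a)$, a unital \hm\ from $C(X)$ to $F$ (indeed $h(1)=\sum_je_j=1_F$), and estimate, for $f\in{\cal F}$,
$$
\phi(f)\approx_{\ep/4}\phi(p_f)\approx p_f(b,b^*)\approx p_f(a,a^*)=h(p_f)\approx_{\ep/4}h(f),
$$
where the outer steps use $\sup_X|f-p_f|<\ep/4$ with the contractivity of $\phi$ and of $h$, the second step uses ${\cal G}$-$\dt$-multiplicativity (turning $\phi$ of each monomial into the same monomial in $b=\phi(\iota)$, with total error a fixed multiple of $\dt$), and the third uses $\|a-b\|$ small together with the norm-continuity of $p_f$. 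Taking $\eta$, $\dt$, and the normal-perturbation error small enough keeps the two middle errors below $\ep/2$, so $\|\phi(f)-h(f)\|<\ep$. The main obstacle is precisely the step invoking \cite{Linalm}: the existence of a nearby genuinely normal element \emph{uniformly in the sizes of the matrix blocks of $F$} is Lin's theorem on almost normal matrices, and everything surrounding it is bookkeeping made possible by $K_1(F)=0$ and by the fact that eigenvalues, being points, meet no topological obstruction to being pushed into $X$.
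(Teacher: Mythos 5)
Your proposal is correct and follows essentially the same route the paper intends: the paper offers no argument beyond citing Lin's almost-commuting (almost normal) matrices theorem, and your sketch is the standard derivation from it --- extract $b=\phi(\iota)$, force approximate normality and approximate invertibility of $b-\lambda$ off $X$ via the chosen ${\cal G}$, perturb to a normal element by Lin's dimension-uniform theorem, push the finitely many eigenvalues into $X$, and define $h$ by functional calculus. Your caveat that $\phi$ should be (approximately) unital is apt, since otherwise no unital homomorphism can be close to $\phi$, and this is how the lemma is used in the paper.
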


\begin{definition}\label{DSB}
Let $X$ be a compact metric space, let $A$ be a  \CA\, with $T(A)\not=\emptyset,$ and
let $T\subset T(A)$ be a subset.
Suppose that $\phi: C(X)\to A$ is a unital \hm.
We shall say $\phi$ has (SB)  property  with respect to $T,$ if, for any $\dt>0,$ there is a finite
open cover $\{O_1, O_2, ..., O_m\}$ of $X$ with $\max\{{\rm diam}(O_i):i=1,...,m\}<\dt$ such that
$\mu_\tau(\partial(O_j))=\mu_\tau({\overline{O}_j}\setminus O_j)=0,$ for all $\tau\in T,$ $i=1,2,...,m,$
where $\mu_\tau$ is the probability Borel measure induced
by the state $\tau\circ \phi.$

If $T=T(A),$ we shall simplify say that $\phi$ has (SB) property.

\end{definition}

The following is easily proved (see the proof of Lemma 3 of  \cite{Lin94}).

\begin{proposition}\label{LSBNep}
Let $A$ be a unital simple \CA\, with $T(A)\not=\emptyset.$
Suppose that $\phi: C(X)\to A$ is a unital \hm\, with (SB) property.
Then the following holds:
For any $\dt_1>0$ and $\eta>0,$ there exists $\dt_2>0$ with $\dt_2<\dt_1$
and there exists a
compact subset  $K$of  $X$ such that

{\rm (i)}  $X\setminus K$ is a finite disjoint union of
open subsets $\{O_j: 1\le j\le m\}$ with $\max\{{\rm diam}(O_j):1\le j\le m\}<\dt_1,$

{\rm (ii)} $Y_i'\cap Y_j'=\emptyset,$ if $i\not=j,$
where $Y_{j}'=\{x\in X: {\rm dist}(x, O_j)<\dt_2/16\},$ $j=1,2,...,m,$

{\rm (iii)} $\cup_{j=1}^m Y_j\supset X,$
where $Y_j=\{x\in X: {\rm dist}(x, O_j)<\dt_2\},$ $j=1,2,...,m,$
and
\beq
\hspace{-1.9in}{\rm (iv)} \hspace{0.12in} \mu_{{\tau}}(K)<\eta/4(m+1)\tforal \tau\in T(A).
\eneq
\end{proposition}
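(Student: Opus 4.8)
The plan is to extract, from the (SB) property applied at scale $\dt_1$, a finite open cover whose pieces have small diameter and whose boundaries carry no mass under every $\mu_\tau$, and then to swallow the boundaries into a compact ``leftover'' set $K$ of arbitrarily small total measure. The (SB) property gives, for any prescribed scale, a finite open cover $\{O_1,\dots,O_m\}$ of $X$ with $\max_j \diam(O_j)<\dt_1$ and $\mu_\tau(\partial O_j)=0$ for all $\tau\in T(A)$. First I would shrink and disjointify these sets: since $X$ is compact, finitely many suffice, and by replacing $O_j$ with $O_j\setminus\bigcup_{i<j}\overline{O_i}$ (or an analogous disjointification) one arranges a finite disjoint union of open sets still covering $X$ up to a set contained in $\bigcup_j\partial O_j$. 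That residual set is exactly where the compact $K$ will live.

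The key quantitative step is to choose the auxiliary scale $\dt_2<\dt_1$ correctly so that the two enlargements in (ii) and (iii) behave as required. For each piece $O_j$ set $Y_j'=\{x:\dist(x,O_j)<\dt_2/16\}$ and $Y_j=\{x:\dist(x,O_j)<\dt_2\}$. Property (iii), $\bigcup_j Y_j\supset X$, is automatic once the $O_j$ cover $X$, since enlarging each $O_j$ only helps; it holds for any $\dt_2>0$. Property (ii), the disjointness $Y_i'\cap Y_j'=\emptyset$ for $i\neq j$, is the place that forces $\dt_2$ to be small: I would take $\dt_2$ strictly smaller than $8$ times the minimum pairwise distance between the disjointified cells (after first passing to slightly shrunken closed cores so that distinct cells are separated by a positive gap), so that $\dt_2/16$-neighborhoods of distinct cells cannot meet. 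Finitely many cells means this minimum separation is a positive number, so such a $\dt_2$ exists; and $\dt_2<\dt_1$ can be imposed at no cost.

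The remaining point is the measure estimate (iv), and I expect this to be the main obstacle because it must hold \emph{uniformly} over all $\tau\in T(A)$, not merely pointwise. The set $K=X\setminus\bigcup_j O_j$ is contained in $\bigcup_j\partial O_j$, a finite union of sets each of $\mu_\tau$-measure zero for every $\tau$, so $\mu_\tau(K)=0$ for each individual $\tau$. To upgrade this to a uniform bound $\mu_\tau(K)<\eta/4(m+1)$ I would fatten $K$ slightly to an open set $U\supset K$ and use that $\mu_\tau(U)=\mu_\tau(\{g>0\})$ for a fixed continuous $0\le g\le 1$ supported in $U$ with $g=1$ on $K$; the uniform smallness of $\tau(\phi(g))=\int g\,d\mu_\tau$ over all tracial states then follows by choosing $g$ with $\tau(\phi(g))$ small, exploiting that the boundary has zero measure and that one may take the support of $g$ to have arbitrarily small measure uniformly—here I would invoke the simplicity of $A$ together with the argument of Lemma~3 of \cite{Lin94}, which is precisely what controls the tracial measures uniformly in the simple setting. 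Once $U$ is fixed, redefine $K$ to be its complement's complement appropriately so that (i)--(iii) survive the fattening; shrinking $\dt_2$ further if necessary preserves (ii) and (iii). Assembling these choices yields the stated $\dt_2$, $K$, and cover.
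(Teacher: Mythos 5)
The paper does not actually write out a proof of this proposition (it is quoted as ``easily proved'' from the proof of Lemma 3 of \cite{Lin94}), so your proposal has to stand on its own, and as written it has a genuine gap: properties (ii) and (iii) pull $\dt_2$ in opposite directions, and you never show a single $\dt_2$ satisfies both. After disjointification the cells $O_j=B_j\setminus\bigcup_{i<j}\overline{B_i}$ are disjoint open sets, but disjoint open sets can share boundary points, so their mutual distances may be $0$ and then \emph{no} $\dt_2>0$ makes the $\dt_2/16$-neighborhoods disjoint. Your parenthetical fix --- shrink the cells until they are separated by a positive gap $d$ --- creates exactly the compact leftover $K$ that (i) refers to, and now (iii) forces $\dt_2>c:=\sup_{x\in X}\dist(x,\bigcup_j O_j)$ (the shrunken cells no longer have dense union near $K$), while (ii) forces $\dt_2<8d$. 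So you need $c<8d$, and nothing in your argument produces this; your closing remark that ``shrinking $\dt_2$ further if necessary preserves (ii) and (iii)'' is backwards, since shrinking $\dt_2$ preserves (ii) but destroys (iii). The inequality $c<8d$ is in fact \emph{not} automatic for the uniform shrinking you describe (deleting a fixed neighborhood of the null set from every cell): one can construct compact $X$ and covers in which two cells approach each other at a much finer scale than the scale at which $X$ accumulates at the leftover set, so that $d\ll c$ for every width of the deleted neighborhood. One must either shrink adaptively (keeping one of each pair of abutting cells intact near their common boundary), or, as in the planar situation of \cite{Lin94} that this paper actually uses, take the $O_j$ to be cells of a rectangular grid whose lines have $\mu_\tau$-measure zero; the grid geometry is precisely what makes $c$ and $d$ comparable and hence yields a common $\dt_2$.

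Two smaller points. First, your claim that (iii) holds ``for any $\dt_2>0$'' is only true before the shrinking (because the boundaries are nowhere dense, so the disjointified cells have dense union), i.e.\ exactly in the configuration where (ii) fails; (ii) and (iii) are never verified simultaneously in your argument. Second, for (iv) the mechanism that upgrades the pointwise fact $\mu_\tau(K_0)=0$ to a bound uniform over $T(A)$ is weak-$*$ compactness of $T(A)$ together with Dini's theorem applied to the decreasing sequence of continuous functions $\tau\mapsto\tau(\phi(g_n))$ with $g_n\downarrow\chi_{K_0}$; simplicity of $A$ is not what is doing the work there, and the estimate has to be carried out for the \emph{enlarged} $K$ produced by the shrinking (the unshrunk $K_0$ has measure zero but does not satisfy (ii)).
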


\begin{lemma}[see the proof of Lemma 2 of \cite{Lin94} and that of Lemma 4.8 of \cite{Lin96app}]\label{LSB}
Let $A$ be a unital simple \CA\, $T(A)\not=\emptyset$ such that
its extremal points $\partial_e(T(A))$ has countably many points.
Suppose that $X$ is a compact metric space and
$\phi: C(X)\to A$ is a unital \hm.
Then $\phi$ has (SB) property.
\end{lemma}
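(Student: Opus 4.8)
The plan is to replace the full tracial simplex by its countable set of extreme points, construct the required cover against a single auxiliary measure, and then promote the resulting boundary estimates from the extreme traces back to all of $T(A)$ by a maximum-principle argument. First I would enumerate $\partial_e(T(A))=\{\tau_1,\tau_2,\dots\}$, which is legitimate precisely because the hypothesis says this set is countable, and form the Borel probability measure $\mu:=\sum_{n}2^{-n}\mu_{\tau_n}$ on $X$. A Borel set $E\subset X$ is $\mu$-null if and only if $\mu_{\tau_n}(E)=0$ for every $n$, so it suffices to produce, for each $\delta>0$, a finite open cover of $X$ with small diameters all of whose boundaries are $\mu$-null.

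To build the cover, fix $\delta>0$ and use compactness of $X$ to choose finitely many points $x_1,\dots,x_m$ with $X=\bigcup_i B(x_i,\rho)$ for some $\rho<\delta/3$. For fixed $i$ the spheres $S(x_i,r):=\{y:\dist(x_i,y)=r\}$ are pairwise disjoint, so $\sum_r\mu(S(x_i,r))\le\mu(X)=1$ and hence $\{r:\mu(S(x_i,r))>0\}$ is countable. I would therefore pick $r_i\in[\rho,\delta/2)$ with $\mu(S(x_i,r_i))=0$ and set $O_i:=B(x_i,r_i)$. Since $r_i\ge\rho$ these still cover $X$, their diameters are $<\delta$, and $\partial O_i\subset S(x_i,r_i)$ forces $\mu(\partial O_i)=0$; equivalently $\mu_{\tau_n}(\partial O_i)=0$ for all $n$ and all $i$.

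It remains to pass from the extreme traces to an arbitrary $\tau\in T(A)$, and this is the heart of the matter. Fix $j$ and put $E=\partial O_j=\overline{O_j}\setminus O_j$, a closed set. The function $F(\tau):=\mu_\tau(E)$ is affine in $\tau$ because $\tau\mapsto\tau\circ\phi$ is affine; moreover, by outer regularity of Borel measures on a compact metric space, $F(\tau)=\inf\{\tau(\phi(f)):f\in C(X),\ 0\le f\le 1,\ f\ge\chi_E\}$, an infimum of weak-$*$ continuous affine functions on $T(A)$, so $F$ is upper semicontinuous. By Bauer's maximum principle (\cite{Alf}) an upper semicontinuous convex---in particular affine---function on the compact convex set $T(A)$ attains its maximum at an extreme point; since $F\ge 0$ and $F\equiv 0$ on $\partial_e(T(A))$, we conclude $F\equiv 0$ on $T(A)$. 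Thus $\mu_\tau(\partial O_j)=0$ for every $\tau\in T(A)$ and every $j$, which is exactly the (SB) property of Definition \ref{DSB}.

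The only genuinely delicate point is the interaction of the last two steps. Countability of $\partial_e(T(A))$ is what allows a single radius $r_i$ to be chosen simultaneously for all extreme traces: without it, the union of the bad radii over an uncountable family of traces could fill the whole interval $[\rho,\delta/2)$ and no good radius would survive. Bauer's maximum principle is then what removes the non-extreme traces at no additional cost, since $\mu_\tau(E)$ is a nonnegative affine upper semicontinuous function vanishing on the extreme boundary. Everything else---the covering combinatorics and the inclusion $\partial O_i\subset S(x_i,r_i)$---is routine.
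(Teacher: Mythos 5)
Your proof is correct and follows essentially the same route as the paper's: for each center choose a radius whose sphere is null for every extremal trace (possible because only countably many radii can be bad for a countable family of finite measures), then cover by compactness. The only difference is that you explicitly justify the passage from $\partial_e(T(A))$ to all of $T(A)$ via upper semicontinuity and Bauer's maximum principle, a step the paper dispatches with ``it follows that''; your justification is valid.
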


\begin{proof}
Let $\dt>0.$ For each $\xi\in X,$ consider $S_{\xi, r}=\{x\in X: {\rm dist}(x, \xi)=r\},$
where $0<r<\dt/2.$ Since $\partial _e(T(A))$ is countable,
there is $0<r_\xi<\dt/2$ such that
\beq
\mu_\tau(S_{\xi, r_\xi})=0\rforal \tau\in \partial_e(T(A)).
\eneq
It follows that
\beq
\mu_\tau(S_{\xi, r_\xi})=0\rforal \tau\in T(A).
\eneq
Let $O_\xi=\{x\in X: {\rm dist}(\xi, x)< r_\xi\}.$
Then $\cup_{\xi\in X} O_\xi=X.$
There are $\xi_1, \xi_2,...,\xi_m\in X$ such
that
\beq
\cup_{i=1}^m O_{\xi_i}\supset X.
\eneq
Note that $\mu_\tau(\partial(O_{{\xi_i}}))=0,$ $i=1,2,...,m.$
\end{proof}

\begin{lemma}\label{Lsbspliting}
Let $A$ be a unital  simple  \CA\, with tracial rank zero.
Suppose that $x\in A$ is a normal element with ${\rm sp}(x)=X$ and
$j_x: C(X)\to A$ is the induced monomorphism which has
(SB) property.  Then, for any $\ep>0,$ any $\sigma>0,$ and any finite subset
${\cal F}\subset C(X),$ there exists a finite subset ${\cal G}\subset C(X)$
and $\eta>0$ satisfying the following condition:
If $\phi: C(X)\to F$ is a unital \hm, where $F$ is a finite dimensional \SCA\,
of $A$ such that
\beqq\label{Lsb-n1}
\sup\{|\tau(\phi(g))-\tau(j_x(g))|: \tau\in T(A)\}<\eta\tforal g\in {\cal G},
\eneqq
 then there exists a unital \hm\, $\phi_1: C(X)\to F$ and a unital  \cpc\, $L:
F\to A$
such that $L\circ \phi_1(C(X))\subset j_x(C(X)),$
\beq
\|\phi(f)-\phi_1(f)\|<{{\ep}}\tforal f\in {\cal F},\\
\|L\circ \phi_{{1}}(f)-j_x(f)\|<\ep\,\,\,\tforal f\in{\cal F}\tand\\
\sup\{|\tau\circ L(a)-\tau(a)\}|: \tau\in T(A)\}\le \sigma\|a\|\tforal a\in F.
\eneq
\end{lemma}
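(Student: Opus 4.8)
The plan is to exploit that $\phi$, being a unital \hm\ into a finite dimensional algebra, has finite spectrum concentrated at finitely many points of $X$, and to match this atomic data, measure-theoretically, against the measures $\mu_\tau$ coming from $j_x$ by means of the (SB) property. Throughout I assume $1_F=1_A$; the general case differs only by a routine adjustment on the complement $1_A-1_F$, whose trace is forced to be small by \eqref{Lsb-n1}.

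First I would fix the data depending only on $j_x,\ep,\sigma,\mathcal F$. Since $j_x$ has (SB) (this is \ref{LSB} in the relevant generality), I apply \ref{LSBNep} to choose a finite cover $\{O_1,\dots,O_m\}$ of $X$ of small diameter with $\mu_\tau(\partial O_j)=0$ for all $\tau\in T(A)$, distinct representatives $\xi_j\in O_j$, and a subordinate partition of unity $\{H_j\}\subset C(X)$ with $H_j$ supported near $O_j$, chosen so fine that $\|\sum_j f(\xi_j)H_j-f\|_\infty<\ep$ and $|f(\lambda)-f(\xi_j)|<\ep$ whenever $\lambda\in O_j$, for all $f\in\mathcal F$. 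I then set $\mathcal G=\mathcal F\cup\{H_1,\dots,H_m\}$ and take $\eta$ small enough that \eqref{Lsb-n1} forces $|\tau(P_j)-\mu_\tau(H_j)|<\sigma/2m$ for every $\tau$, where $P_j$ is defined below; here one uses that, up to the $\mu_\tau$-null boundaries, $\tau(\phi(H_j))$ reads off exactly the $\phi$-mass carried by the eigenvalues lying in $O_j$ (the standard (SB) bookkeeping of \cite{Lin94,Lin96app}).

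Now, given $\phi$, write $F=\bigoplus_r M_{R(r)}$ and decompose each $\pi_r\circ\phi$ into point evaluations; assigning each eigenvalue to a region and moving it to the representative $\xi_j$ produces a genuine unital \hm\ $\phi_1(f)=\sum_j f(\xi_j)P_j$ with $P_j=\sum_r Q_{r,j}$, where $Q_{r,j}$ collects the eigenprojections in $O_j$ lying in the $r$-th block. Smallness of the diameters gives $\|\phi(f)-\phi_1(f)\|<\ep$ on $\mathcal F$, and since the $\xi_j$ are distinct, $\phi_1(C(X))=\span\{P_j\}=:D_0$. For $L$ I would first take the conditional expectation $E=\bigoplus_r E_r\colon F\to D:=\bigoplus_{r,j}\C\,Q_{r,j}$, where $E_r$ is the $\tr_r$-preserving expectation of $M_{R(r)}$ onto $\span\{Q_{r,j}\}_j$. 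Because every $\tau\in T(A)$ restricts to a scalar multiple of $\tr_r$ on the $r$-th block, $E$ is trace preserving for all $\tau$ simultaneously. It then remains to choose a unital positive $\Lambda\colon D\to A$ with $\Lambda(Q_{r,j})=b_{r,j}$ and put $L=\Lambda\circ E$.

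The heart of the matter — and the step I expect to be the main obstacle — is the trace condition, because the $Q_{r,j}$ typically straddle several matrix blocks of $F$ while distinct traces weight those blocks independently: with $w_{r,j}=\rank(Q_{r,j})/R(r)$ and $\beta_r=\tau(1_{M_{R(r)}})$ one has $\tau(Q_{r,j})=\beta_r w_{r,j}$, and the affine function $\tau\mapsto\beta_r w_{r,j}$ cannot be reproduced by the $\mu_\tau$-mass of any fixed positive function supported near $O_j$ (within one region all the $\mu_\tau$ look like the same atom at $\xi_j$). The resolution is to refuse to confine the pieces $b_{r,j}$ to $j_x(C(X))$: the constraint $L\circ\phi_1(C(X))\subset j_x(C(X))$ only sees the sum $\sum_r b_{r,j}=L(P_j)$, so the individual $b_{r,j}$ may live in the larger hereditary subalgebra $\overline{j_x(H_j)Aj_x(H_j)}$. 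Using \ref{Lcp}, and the fact that in a simple algebra with $TR(A)=0$ the trace is the only obstruction to comparison, I would carve $j_x(H_j)=\sum_r b_{r,j}$ into positive elements with $\tau(b_{r,j})=\beta_r w_{r,j}$ exactly for all but one index $r\ne r_0$ (realizing these target affine functions by the central elements $w_{r,j}1_{M_{R(r)}}$ and carving them as mutually orthogonal subelements so the partial sums stay below $j_x(H_j)$), and letting $b_{r_0,j}=j_x(H_j)-\sum_{r\ne r_0}b_{r,j}\ge 0$ absorb the single per-region discrepancy $\mu_\tau(H_j)-\tau(P_j)$. Then $L(P_j)=\sum_r b_{r,j}=j_x(H_j)$, whence $L\circ\phi_1(C(X))\subset j_x(C(X))$ and $\|L\circ\phi_1(f)-j_x(f)\|<\ep$ on $\mathcal F$; $L$ is unital completely positive as a positive map out of the abelian algebra $D$; and since $E$ is trace preserving, $|\tau(L(a))-\tau(a)|\le\|a\|\sum_j|\mu_\tau(H_j)-\tau(P_j)|<\sigma\|a\|$, as required.
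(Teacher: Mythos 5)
Your proposal follows essentially the same route as the paper's proof: an (SB) cover with $\mu_\tau$-null boundaries and a subordinate partition of unity, regrouping the eigenprojections of $\phi$ by region to define $\phi_1$, a trace-preserving conditional expectation onto the block-diagonal abelian algebra spanned by the $Q_{r,j}$, and then Lemma \ref{Lcp} to carve $j_x$ of each partition function into block-indexed positive pieces whose traces match $\tau(Q_{r,j})$, with one designated piece per region absorbing the discrepancy. The only point you gloss over is that Lemma \ref{Lcp} requires the strict inequality $\sum_{r\ne r_0}\tau(Q_{r,j})<\tau(j_x(H_j))$ for every $\tau$ at each carving step, which need not hold since $\tau(P_j)$ may exceed $\tau(j_x(H_j))$ while $\tau(Q_{r_0,j})$ may be arbitrarily small; the paper secures this by carving $j_x\bigl(H_j+(\eta/2N)\cdot 1\bigr)$ rather than $j_x(H_j)$, a harmless inflation already absorbed by the error budget in your final estimates.
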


\begin{proof}

Choose $\dt_1>0$ such that
\beqq\label{Lsb-n-1-1}
|f(x)-f(x')|<\min\{{{\ep/64}},\sigma/4\}\rforal f\in {\cal F},
\eneqq
if ${\rm dist}(x, x')<2\dt_1.$ Choose $\eta=\min\{\ep/64, \sigma/4\}.$
Suppose that $j_x: C(X)\to A$ is a  unital \hm\, with (SB) property.
\Wlog, we may assume that ${\cal F}$ is in the unit ball of $C(X).$

There is $\dt_2>0$ with $\dt_2<\dt_1/4$ and
there are compact subset $K\subset X,$ open subsets
$O_1, O_2,..., O_N$ and $Y_1, Y_2,...,Y_N$ of $X$ satisfy the condition (i), (ii),  (iii) and (iv) in {{ \ref{LSBNep}}}.
Let $K=X\setminus \cup_{i=1}^N O_j$ be as in {{\ref{LSBNep}}}.

Let $f_1, f_2,...,f_N$ be a partition of unity with compact support ${\rm supp}(f_j)\subset Y_j,$
$j=1,2,...,N.$
Let  $g_K\in C(X)$ be a function such that
$g_K(t)=0$ if ${\rm dist}(t, \cup_{j=1}^N O_j)\le \dt_2/64$ and $g_K(t)=1$ if ${\rm dist}(t, \cup_{j=1}^N O_j)\ge \dt_2/16$
and $0\le g_K(t)\le 1.$ Then ${\rm supp}(g_K)\subset K.$
Let ${\cal G}={\cal F}\cup \{f_j: 1\le j\le N\}\cup \{g_K\}.$
Put $\xi_j\in O_j,$ $j=1,2,...,N.$
\Wlog, we may assume that
\beqq\label{5Lfinited-n1}
\|\sum_{i=1}^N  f(\xi_j)f_j-f\|<\min\{\ep/16, \sigma/4\}\rforal f\in {\cal F}.
\eneqq
Define
\beqq
s_0=\inf\{\tau(j_x(f_j)): \tau\in T(A),\,1\le j\le N\}>0.
\eneqq
Let $\phi: C(X)\to F$ be a unital \hm\, for some finite dimensional \SCA\, $F$ of $A$
which satisfies \eqref{Lsb-n1} for  $\eta_0=\min\{\eta/8(N+1), s_0/2\}.$

Write $\phi(f)=\sum_{k=1}^n f(x_k) p_k$ for all $f\in C(X),$
where $p_1, p_2,...,p_n$ are mutually orthogonal projections and
$x_1, x_2,...,x_n$ are distinct points.  By the choice of $s_0$ and $\eta_0,$  and
by reordering, we may assume that $\{x_1,x_2,...,x_n\}\cap Y'_j\not=\emptyset,$
$j=1,2,...,N.$
Note that
\beqq\label{5Lfinited-11}
\phi(\sum_{i=1}^N f(\xi_i)f_i)=\sum_{i=1}^{N} f(\xi_i)\phi(f_i).
\eneqq
Note also that, by \eqref{Lsb-n1} and (iv) of {{\ref{LSBNep}}},
\beq
\tau(\phi(g_K))<\eta/2(N+1)\rforal \tau\in T(A).
\eneq
Put $K_1=\{x\in X: {\rm dist}(x, {{\cup_{j=1}^N O_j}})\ge \dt_2/16\}.$
It follows that
\beqq\label{5Lfinited-1}
\sum_{x_i\in K_1} \tau(p_i)<\eta/2(N+1).
\eneqq

Let $q_1=\sum_{x_j\in Y_1}p_j,$ $q_2=(1_F-q_1)\sum_{x_j\in Y_2}p_i,$...,
$q_{N}=(1-\sum_{i=1}^{N-1}q_i)(\sum_{x_j\in Y_{N}}p_i).$
Therefore $q_i$ is a sum of  all  projections $p_j$ with property that $x_j\in Y_i'$ and some
of projections $p_j$ with property $x_j\in Y_i\setminus Y_i'.$ Note that $q_i\not=0$ for all $i\in \{1,2,...,N\}$ and
$q_iq_j=0$ if $i\not=j$ and $\sum_{i=1}^{N} q_i=1_F.$
We may also write $q_i=\sum_{k\in S_i}p_k,$ where
$S_1\sqcup S_2\sqcup\cdots \sqcup S_N=\{1,2,...,m\}.$  Moreover, if
$j\in S_i,$ then $x_j\in {Y'_i}.$

Since $\sum_{j=1}^{N}f_j(x)=1$ for all $x\in X,$  then
 $f_j(x_i)=1$ if $x_i\in Y_j'.$  Also we have
\beq
\phi(f_j)=\sum_{x_i\in Y_j} f_j(x_i)p_i=(\sum_{x_i\in Y_j'}p_i)+(\sum_{x_i\in Y_j\cap K_1}f_j(x_i)p_i)
\eneq
It follows  from \eqref{Lsb-n1} and  \eqref{5Lfinited-1} that
\beqq\label{5Lfinited-2}
|\tau(q_j)-\tau(\phi(f_j))|\le \tau(g_K)<\eta/2(N+1)\rforal \tau\in T(A).
\eneqq
Let $C$ be the \SCA\, of $F$ generated by $q_1, q_2,...,q_{N}.$

Write $$F=M_{r(1)}\bigoplus M_{r(2)}\bigoplus\cdots \bigoplus M_{r(m)}$$ and
$\pi_j: F\to M_{r(j)}$ is the quotient map. Define $C_j=\pi_j(C),$   $j=1,2,...,m.$

Put $q_{i,j}=\pi_j(q_i)$ and $p_{k,j}=\pi_j(p_k)$  which we also view them as  projections in $F$ as well as projections in $A.$  

Put $g_i=f_i+(\eta/2N)\cdot 1,$ $i=1,2,...,N.$
Then, for fixed $i,$  by \eqref{5Lfinited-2}, $\sum_{j=1}^m\tau(q_{i,j})<\tau(j_x(g_i)),$ $i=1,2,...,N.$
Thus, by \ref{Lcp} we obtain $a_{i,j}\in A_+$ such that (again, viewing $q_{i,j}$ as projection in $A$),
\beqq
&&a_{i,j}\le j_x(g_i),\,\,\, \sum_{j=1}^m a_{i,j}=j_x(g_i)\andeqn \tau(a_{i,j})=\tau({{q_{i,j}}}),\,\,\,1\le j\le m-1,\\
&&\tau(a_{i,m})=\tau(j_x(g_i))-\sum_{j=1}^{m-1} \tau(a_{i,j})\andeqn\\\label{Lsb-12-2}
&&|\tau(a_{i,m})-\tau(q_{i,{{m}}})|\le \eta/2N,\,\,\,1\le i\le N
\eneqq
for all $\tau\in T(A).$

Define $\phi_1: C(X)\to C\subset F$ by
$\phi_1(f)=\sum_{i=1}^{N} f(\xi_i)q_i$ for all $f\in C(X).$
Then,  by  \eqref{Lsb-n-1-1},
\beqq\label{Lsb-12}
\|\phi_1(f)-\phi(f)\|<\min\{\ep/16, \sigma/4\}\rforal f\in {\cal F}.
\eneqq
Define $L_1: \bigoplus_{j=1}^m C_j\to A$ by
\beq
L_1(\sum_{i=1}^{N}(\sum_{j=1}^m\lambda_{i,j} q_{i,j}))=\sum_{i=1}^{N}(\sum_{j=1}^m\lambda_{i,j}{{a_{i,j}}})
\,\,\,{\rm for}\,\,\, \lambda_{i,j}\in \C.
\eneq
Then, by \eqref{5Lfinited-2} and \eqref{Lsb-12-2}, for all $c\in F,$
\beqq\label{Lsb-20}
|\tau\circ L_1(c)-\tau(c)|\le (\sigma/2)\|c\|\rforal \tau\in T(A).
\eneqq
We also note that
\beqq
L_1(\sum_{i=1}^{N}\lambda_i q_i)=\sum_{i=1}^{N}\lambda_i{{j_x}}(g_i)\in j_x(C(X)).
\eneqq
In other words, $L_1$ maps $C$ into $j_x(C(X)).$
Moreover,  by \eqref{5Lfinited-n1},
\beqq\label{Lsb-13}
&&\hspace{-0.4in}\|L_1\circ \phi_1(f)-j_x(f)\|=\|L_1(\sum_{i=1}^{N}f(\xi_i)q_i)-j_x(f)\|\\\label{Lsb-13+}
&&\le \|\sum_{i=1}^{N} f(\xi_i)j_x(g_i)-\sum_{i=1}^N f(\xi_i){{j_x(f_i)}}\|+\|\sum_{i=1}^N f(\xi_i){{j_x(f_i)}}-j_x(f)\|\\
&&<\sum_{i=1}^N| f(\xi_i)|(\eta/2N)+\ep/16<\ep/4\rforal f\in {\cal F}.
\eneqq

Since $M_{r(j)}$ and $C_j$
 are von-Neumann algebras, there exists a conditional expectation
$E_j: M_{r(j)}\to C_j$ such that
\beq
t(E_j(a))=t(a)\rforal a\in M_{r(j)},\,\,{\rm where}\,\,t\in T(M_{r(j)}),\,\,\, j=1,2,...,m.
\eneq
Consequently,
\beqq\label{Lsb-14-1}
\tau(E_j(a))=\tau(a)\rforal a\in M_{r(j)} \andeqn \tau\in T(A).
\eneqq
Define $E: F\to \bigoplus_{i=1}^m C_i$ by $E=\oplus_{i=1}^m E_i\circ \pi_i.$
Note $E^2=E.$
Hence we also have, by \eqref{Lsb-12},
\beqq\label{Lsb-14}
\|E\circ \phi(f)-\phi_1(f)\|=\|E\circ\phi(f)-E\circ \phi_1(f)\|<\ep/4\tforal f\in {\cal F}.
\eneqq
Put $L=L_1\circ E: F\to A.$  Then
$L_1(\phi_1(C(X)))=j_x(C(X).$ Moreover, for any $f\in C(X),$  by \eqref{Lsb-14} and \eqref{Lsb-13+},
\beq
&&\|L\circ \phi(f)-j_x(f)\|\\
&\le& \|L_1\circ E\circ \phi(f)-L_1\circ E\circ \phi_1(f)\|+\|L_1\circ E\circ \phi_1(f)-j_x(f)\|\\
&\le& \ep/4+\|L_1\circ \phi_1(f)-j_x(f)\|<\ep/4+\ep/4\rforal f\in {\cal F}.
\eneq
Furthermore, by \eqref{Lsb-14-1} and \eqref{Lsb-20},
\beq
&&|\tau\circ L(a)-\tau(a)|=|\tau\circ L(a)-\tau(E(a))|
=|\tau(L_1(E(a))-\tau(E(a))|\\
&\le& (\sigma/2)\|E(a)\|
\le(\sigma/2)\|a\|\rforal a\in F.
\eneq

\end{proof}

\begin{lemma}\label{2Lsbspliting}
Let $A$ be a unital  simple  \CA\, with tracial rank zero and with a unique
tracial state.
Suppose that $x\in A$ is a normal element with ${\rm sp}(x)=X$ and
$j_x: C(X)\to A$ is the induced monomorphism.
Then, for any $\ep>0,$ any $\sigma>0,$ and any finite subset
${\cal F}\subset C(X),$ there exists a finite subset ${\cal G}\subset C(X)$
and $\eta>0$ satisfying the following condition:
If $\phi: C(X)\to F$ is a unital \hm, where $F$ is a finite dimensional \SCA\,
of $A$ such that
\beqq\label{Lsb-n11}
{{|\tau(\phi(g))-\tau(j_x(g))|}}<\eta\tforal g\in {\cal G},
\eneqq
 then there exists
 a unital  \cpc\, $L:
F\to j_x(C(X))$
such that
\beq
&&\|L\circ \phi(f)-j_x(f)\|<\ep\,\,\,\tforal f\in{\cal F}\tand\\
&&|\tau\circ L(a)-\tau(a)|\le \sigma\|a\|\tforal a\in F.
\eneq
\end{lemma}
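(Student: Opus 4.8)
The plan is to follow the proof of \ref{Lsbspliting} almost verbatim for the covering construction, and then to exploit the unique trace twice: once to supply the hypothesis of \ref{Lsbspliting} for free, and once to replace the auxiliary map $L_1$ there (whose range lies in $A$, not in $j_x(C(X))$) by a cleaner map built from a trace-preserving conditional expectation. First I would observe that, since $A$ has a unique tracial state, $\partial_e(T(A))$ consists of a single point and is in particular countable; hence by \ref{LSB} the monomorphism $j_x:C(X)\to A$ automatically has the (SB) property. This is exactly the extra hypothesis that \ref{Lsbspliting} required but that \ref{2Lsbspliting} no longer states, so all of the covering machinery of \ref{LSBNep} is available.

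Then I would run the opening of the proof of \ref{Lsbspliting}: fix $\dt_1>0$ so that $|f(\xi)-f(\xi')|<\min\{\ep/64,\sigma/4\}$ for all $f\in{\cal F}$ whenever ${\rm dist}(\xi,\xi')<2\dt_1$, apply \ref{LSBNep} to produce the open sets $O_1,\dots,O_N$, $Y_j$, $Y_j'$ and the compact set $K$, choose a partition of unity $f_1,\dots,f_N$ subordinate to the $Y_j$, and points $\xi_j\in O_j$ with $\|\sum_j f(\xi_j)f_j-f\|$ small for $f\in{\cal F}$. The output finite set is ${\cal G}={\cal F}\cup\{f_j:1\le j\le N\}\cup\{g_K\}$, and $\eta$ is taken small relative to $N$, $\sigma$, $\ep$ and $s_0=\inf_\tau\min_j\tau(j_x(f_j))$; in particular $\eta\le\sigma/2N$ and $\eta$ small enough to force, by the counting argument of \ref{Lsbspliting}, that each resulting spectral projection $q_j$ of $\phi$ is nonzero. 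Writing $\phi(f)=\sum_k f(x_k)p_k$, I group the $p_k$ exactly as there into orthogonal projections $q_1,\dots,q_N$ with $\sum_j q_j=1_F$ and $x_k\in Y_j'$ whenever $p_k\le q_j$, set $C=C^*(q_1,\dots,q_N)\cong\C^N$ and $\phi_1(f)=\sum_j f(\xi_j)q_j$, so that $\|\phi_1(f)-\phi(f)\|<\min\{\ep/16,\sigma/4\}$ on ${\cal F}$.

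The decisive step, where the unique trace does the real work, is the construction of $L$. Since $F$ is finite dimensional and $\tau|_F$ is a faithful trace ($A$ being simple), there is a unique $\tau$-preserving conditional expectation $E:F\to C$, and it is unital and completely positive. Define $\Lambda:C\to j_x(C(X))$ by $\Lambda(q_j)=j_x(f_j)$, extended linearly; because $\{q_j\}$ are orthogonal projections summing to $1_F$ while $\sum_j j_x(f_j)=j_x(1)=1_A$ with each $j_x(f_j)\ge0$, the map $\Lambda$ is unital and positive, hence unital completely positive and contractive (its domain is commutative), and its range lies in $j_x(C(X))$. Put $L=\Lambda\circ E:F\to j_x(C(X))$. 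For the norm estimate, $E$ fixes $C$, so $L\circ\phi_1(f)=\Lambda(\phi_1(f))=\sum_j f(\xi_j)j_x(f_j)=j_x\big(\sum_j f(\xi_j)f_j\big)$, which is within $\ep/16$ of $j_x(f)$; combined with $\|L(\phi(f)-\phi_1(f))\|\le\|\phi(f)-\phi_1(f)\|<\ep/16$ this gives $\|L\circ\phi(f)-j_x(f)\|<\ep$ on ${\cal F}$. For the trace estimate, $\tau\circ E=\tau$ reduces matters to $b=E(a)=\sum_j\lambda_j q_j\in C$ with $|\lambda_j|\le\|a\|$, whence $|\tau(L(a))-\tau(a)|=|\tau(\Lambda(b))-\tau(b)|\le\|a\|\sum_j|\tau(j_x(f_j))-\tau(q_j)|$; here $|\tau(q_j)-\tau(\phi(f_j))|$ is controlled by the covering estimate as in \ref{Lsbspliting} and $|\tau(\phi(f_j))-\tau(j_x(f_j))|<\eta$ by hypothesis \eqref{Lsb-n11}, so the sum is at most $2N\eta\le\sigma$.

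The main obstacle is conceptual rather than computational: in \ref{Lsbspliting} the map $L_1$ was forced to use the positive elements $a_{i,j}\in A$ produced by \ref{Lcp}, which generally do not lie in $j_x(C(X))$, and this is precisely why that lemma could only assert $L\circ\phi_1(C(X))\subset j_x(C(X))$ rather than $L(F)\subset j_x(C(X))$. The point to get right is that the unique trace makes a genuine trace-preserving conditional expectation $E:F\to C$ available, so that one can afford to map all of $F$ down to the commutative corner $C$ first and only then apply the harmless partition-of-unity map $q_j\mapsto j_x(f_j)$; the two trace discrepancies to absorb (namely $q_j$ versus $\phi(f_j)$, and $\phi(f_j)$ versus $j_x(f_j)$) must both be pushed below $\sigma/2N$, which is what dictates the final choice of $\eta$.
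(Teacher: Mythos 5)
Your proposal is correct and follows essentially the same route as the paper's own proof: invoke \ref{LSB} to get the (SB) property for free from the single extremal trace, reuse the covering construction of \ref{Lsbspliting} up to the definition of $C$, and then compose the $\tau$-preserving conditional expectation $E:F\to C$ with the partition-of-unity map $q_j\mapsto j_x(f_j)$, with the same two trace discrepancies absorbed by the choice of $\eta$.
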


\begin{proof}
By the assumption and \ref{LSB},  $j_x$ has (SB) property.
The proof is a simplification of that of  \ref{Lsbspliting}.
We keep all lines of the proof of \ref{Lsbspliting} until
$C$ is defined.  Let $\tau$ be the only tracial state of $A.$
Then since both $F$ and $C$ are von Neumann algebras, there
is a conditional expectation $E: F\to C$ such that
\beqq
\tau(E(a))=\tau(a)\rforal a\in F.
\eneqq
Define $L_1:C\to j_x(C(X))$ by
$L_1(q_i)={{j_x(f_i)}},\,\,\, i=1,2,...,N.$
This implies, by  \eqref{Lsb-20},
that
\beqq\label{C52-n-1}
|\tau(L_1(c))-\tau(c)|\le (\sigma/4) \|c\|\rforal c\in C.
\eneqq
Let $\phi_1$ be as the same as defined in the proof of \ref{Lsbspliting}.
Note that $\phi_1(C(X))=C$ since $q_i\in C,$ $i=1,2,...,N.$
Define $L=L_1\circ E.$
Then
\beqq
&&\|L(\phi_1(f))-j_x(f)\|=\|L(\sum_{i=1}^N f(\xi_i)q_i-j_x(f)\|\\
&&= \|\sum_{i=1}^N f(\xi_i)j_x(f_i)-j_x(f)\|<\min\{\ep/16, \sigma/4\}\rforal f\in {\cal F}.
\eneqq
It follows that, as $\|\phi_1(f)-\phi(f)\|<\min\{\ep/16, \sigma/4\}$  for all $f\in {\cal F}$ (by \eqref{Lsb-12}),
\beqq
&&\|L(\phi(f))-j_x(f)\|\le   \|L(\phi(f))-L(\phi_1(f))\|\\
&&\hspace{0.4in}+\|L(\phi_1(f))-j_x(f)\|
<\min\{\ep/8, \sigma/2\}\rforal f\in {\cal F}.
\eneqq
We also have, by \eqref{C52-n-1},
\beq
|\tau\circ L(a)-\tau(a)| &=&|\tau\circ L_1\circ E(a)-\tau(a))|\\
&\le & |\tau(L_1(E(a))-\tau(E(a))|\ +|\tau(E(a))-\tau(a)|\\
&\le& (\sigma/2)\|E(a)\|
\le  (\sigma/2)\|a\|\rforal a\in F.
\eneq
\end{proof}

\begin{theorem}\label{51}
Let $A$ be a unital separable {{simple}}
 \CA\, with tracial rank zero.
Suppose that $x, y\in A$ are two normal elements with ${\rm sp}(x)=X$ and ${\rm sp}(y)=Y.$
Denote by $j_x: C(X)\to A$ and $j_y: C(Y)\to A$
the embedding given by $j_x(f)=f(x)$ for all $f\in C(X)$ and $j_y(g)=g(y)$ for all $g\in C(Y).$
Suppose that $j_x$ has (SB) property and
suppose that there exists a sequence of  unital  positive linear maps $\Phi_n: C(X)\to C(Y)$ such that
\beqq
&&\hspace{-0.3in}\lim_{n\to \infty}\|\Phi_n(z_x)-y\|=0\tand\\
&&\hspace{-0.3in}\lim_{n\to\infty}\sup\{|\tau(\Phi_n(f)(y))-\tau(f(x))|: \tau\in T(A)\}=0\tforal f\in C(X),
\eneqq
where $z_x$ is the identity function on $\sp(x)=X.$
Then $y\in \overline{{\rm conv}({\cal U}(x))}.$
\end{theorem}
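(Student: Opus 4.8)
The plan is to check the criterion of Theorem~\ref{alpha}: it is enough to produce unital \cpc s $\psi_n:A\to A$ with $\lim_{n\to\infty}\|\psi_n(x)-y\|=0$ and $\lim_{n\to\infty}\sup\{|\tau(\psi_n(a))-\tau(a)|:\tau\in T(A)\}=0$ for every $a\in A$. Because $C(X)$ is commutative each $\Phi_n$ is automatically completely positive, so $\Lambda_n:=j_y\circ\Phi_n:C(X)\to A$ is a unital \cpc\ with range in $C^*(y)$; the hypotheses say exactly that $\Lambda_n(z_x)\to y$ in norm and that $\sup_\tau|\tau(\Lambda_n(f))-\tau(j_x(f))|\to 0$ for each $f\in C(X)$. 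The entire difficulty is that $\Lambda_n$ sees only the commutative subalgebra $C^*(x)$, and must be transported to a map on all of $A$ without spoiling these two properties.

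Fix $\ep>0$ and a finite subset ${\cal F}\subset A$; I build one $\psi$ that is good within $\ep$ on ${\cal F}$ and then diagonalise. Let ${\cal G}\subset C(X)$ and $\delta>0$ be the data furnished by Lemma~\ref{LLin} for $X=\sp(x)$ and tolerance $\ep$. Since $TR(A)=0$, choose a finite dimensional \SCA\ $F\subset A$ with unit $p$ and a unital \cpc\ $\Psi:A\to F$ that is $({\cal F}\cup j_x({\cal G})\cup\{x\})$-$\delta$-multiplicative and whose complementary corner satisfies $\sup_\tau\tau(1-p)<\ep$; the last condition makes $\Psi$ approximately trace preserving, $\sup_\tau|\tau(\Psi(a))-\tau(a)|<\ep$ for $a$ in the unit ball. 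The restriction $\Psi\circ j_x:C(X)\to F$ is then ${\cal G}$-$\delta$-multiplicative, so by Lemma~\ref{LLin} there is a genuine unital homomorphism $\phi:C(X)\to F$ with $\phi\approx\Psi\circ j_x$ on the relevant functions (in particular on $z_x$); combined with the trace estimate for $\Psi$ this gives $\sup_\tau|\tau(\phi(g))-\tau(j_x(g))|<\ep$, so $\phi$ meets the hypothesis of Lemma~\ref{Lsbspliting}.

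Now I run the construction in the proof of Lemma~\ref{Lsbspliting} for $j_x$ (which has (SB) by assumption), retaining all its objects but twisting the output to the $y$ side. That proof yields a partition of unity $f_1,\dots,f_N\in C(X)$ with trace-negligible boundaries, coarse projections $q_i=\sum_{k\in S_i}p_k\in F$ with $\tau(q_i)\approx\tau(j_x(f_i))$, a trace preserving conditional expectation $E:F\to\bigoplus_j C_j$ onto the commutative subalgebra generated by the blocks $q_{i,j}=\pi_j(q_i)$, and points $\xi_i$ for which $\phi_1(f):=\sum_i f(\xi_i)q_i$ is within $\ep$ of $\phi(f)$ on ${\cal F}$. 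Put $g_i=f_i+(\ep/2N)1$. For $n$ large the second hypothesis gives $\tau(\Phi_n(g_i)(y))\approx\tau(j_x(g_i))=\tau(j_x(f_i))+\ep/2N>\tau(q_i)$ uniformly in $\tau$, so Lemma~\ref{Lcp} splits the positive element $\Phi_n(g_i)(y)\in C^*(y)$ as $\sum_j b_{i,j}$ with $b_{i,j}\ge 0$ and $\tau(b_{i,j})\approx\tau(q_{i,j})$, exactly as $j_x(g_i)$ was split into the $a_{i,j}$ in \ref{Lsbspliting}. Define a unital \cpc\ $L^y:\bigoplus_j C_j\to A$ by $q_{i,j}\mapsto b_{i,j}$ (positive because the domain is commutative, and unital after a harmless $O(\ep)$ renormalisation), and set $\psi=L^y\circ E\circ\Psi:A\to A$.

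Two checks remain. For the norm estimate, $\Psi(x)=\Psi(j_x(z_x))\approx\phi(z_x)\approx\phi_1(z_x)=\sum_i\xi_i q_i$, which lies in the range of $E$, so $E(\Psi(x))\approx\sum_i\xi_i q_i$ and therefore $\psi(x)\approx\sum_i\xi_i\Phi_n(g_i)(y)=\Phi_n(\sum_i\xi_i g_i)(y)\approx\Phi_n(z_x)(y)\approx y$, using that $\sum_i\xi_i f_i$ approximates $z_x$ on $X$. For the trace estimate, $E$ is trace preserving, $\Psi$ is approximately trace preserving, and $\tau(L^y(q_{i,j}))=\tau(b_{i,j})\approx\tau(q_{i,j})$, so $L^y$ is approximately trace preserving on $\bigoplus_j C_j$; hence $\sup_\tau|\tau(\psi(a))-\tau(a)|$ is small for $a$ in the unit ball. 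Exhausting $A$ by finite subsets ${\cal F}$, letting $\ep\to 0$, and choosing $n$ large at each stage produces the sequence required by Theorem~\ref{alpha}, giving $y\in\overline{\conv(\uu(x))}$. The main obstacle is exactly the twist in the third paragraph: the hypotheses control $\Phi_n$ only on $C^*(x)$ and $C^*(y)$, and it is the (SB) property (which supplies the trace-continuous partition $f_i$ and the conditional expectation $E$) together with simplicity and Lemma~\ref{Lcp} (which splits positive elements of prescribed trace inside $A$) that let one realise the coarse $x$-to-$y$ transport as an honest trace preserving completely positive map defined on the whole of $A$.
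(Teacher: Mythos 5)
Your argument is correct in substance, but it takes a genuinely different route from the paper. The paper proves Theorem~\ref{51} directly: it discretizes $x$ into a first finite dimensional subalgebra $F_1$ (via \ref{LLin} and \ref{Lsbspliting}), then discretizes $y$ and everything in sight into a second subalgebra $F_2$, transports $\Phi$ into $F_2$ through homomorphisms $h_F$ and $h_Y$, and applies the matrix majorization Lemma~\ref{Ln43} together with the absorption Lemmas~\ref{Ln25} and \ref{Ln04} to exhibit $y$ as an approximate convex combination of unitary conjugates of $x$; the membership in $\conv(\uu(x))$ is built by hand at each stage. You instead reduce to the characterization of Theorem~\ref{alpha} by manufacturing a single global unital \cpc\, $\psi: A\to A$ with $\psi(x)\approx y$ and $\sup_\tau|\tau(\psi(a))-\tau(a)|$ small, and your key new ingredient is the ``twist'' of the proof of Lemma~\ref{Lsbspliting}: rather than sending $q_{i,j}$ back to the pieces $a_{i,j}\le j_x(g_i)$ on the $x$-side, you use Lemma~\ref{Lcp} and the uniform trace hypothesis on $\Phi_n$ to split $\Phi_n(g_i)(y)$ into positive pieces $b_{i,j}$ of the matching traces and send $q_{i,j}\mapsto b_{i,j}$, so that $L^y\circ E\circ\Psi$ carries $\sum_i\xi_iq_i\approx\Psi(x)$ to $\Phi_n(\sum_i\xi_ig_i)(y)\approx y$. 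This buys you a one-stage finite dimensional approximation and lets the corner $(1-p)A(1-p)$ be discarded at the cost of a trace error $\tau(1-p)<\ep$, with all the hard comparison work delegated to the (already proved) Theorem~\ref{alpha}; the paper's direct construction avoids reproving trace preservation of the transported map but pays for it with the two-level $F_1\subset$-then-$F_2$ bookkeeping.

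Two points in your write-up need tightening, though neither is a gap. First, the order of choices must be arranged so that the parameters $\eta$, $\dt$ and the tolerance of the tracial rank zero approximation are fixed \emph{after} the cover of $X$, the partition of unity $f_1,\dots,f_N$ and the lower bound $s_0=\inf_\tau\tau(j_x(f_i))$ are chosen; otherwise the inequality $\tau(q_i)<\tau(\Phi_n(g_i)(y))$ for all $\tau$, which is what licenses Lemma~\ref{Lcp}, is not guaranteed. Second, the estimate $\sup_\tau|\tau(\Psi(a))-\tau(a)|<\ep$ holds only for $a$ in the finite set on which the tracial rank zero decomposition $a\approx(1-p)a(1-p)+\Psi(a)$ was requested, not on the whole unit ball; the trace condition of Theorem~\ref{alpha} for arbitrary $a\in A$ then follows from your diagonalization over an increasing sequence of $\ep_k$-dense finite subsets together with the contractivity of the (renormalized) $\psi_k$, and you should say so explicitly rather than asserting the unit-ball estimate outright.
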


\begin{proof}

\Wlog, we may assume that $\|x\|, \|y\|\le 1.$
To show $y\in \overline{\conv(\uu(x))},$ \wilog, we may assume that $0\in {\rm sp}(x),$ as in the beginning of the proof of
 Theorem \ref{alpha}.
Let $z_x\in C(X)$ be the identity function on $X$ so that $j_x(z_x)=x.$
Let $\ep>0$
and $\sigma>0.$
Let ${\cal F}=\{1, z_x\}\subset C(X)$ be a finite subset.



Choose $\dt_0>0$ such that
\beqq\label{46-n-1}
|f(t)-f(t')|<\ep/64\rforal f\in {\cal F},\,\,\,{if}\,\,\, |t-t'|<\dt_0.
\eneqq
Choose $\dt_{00}=\min\{\dt_0, \ep/2^{10}\}.$
One may write
$$
X=\cup_{i=1}^N {{{\bar O}}}_i,
$$
where ${{O_i}}$ is an open subset of $X$ with diameter no  more than $\dt_{00}/3$ and
${{O_i\cap O_j}}=\emptyset$ if $i\not=j.$
{ {Choose $\xi_i\in O_i$ and $d_i>0$ with  $$O(\xi_i, d_i)=\{x\in X: {\rm dist}(\xi_i, x)<d_i\}\subset  O_i,i=1,2,...,m.$$}}
 Since $0\in X,$ as we assumed,
\wilog, we may also assume that $\xi_1=0.$

Since  $A$ is  a unital simple \CA, for each $\xi\in X,$
\beqq
\inf\{\mu_{\tau}(O(\xi, d_i/2): \tau\in T(A)\}>0,
\eneqq
 where $\mu_\tau$ is the Borel probability measure induced by
$\tau\circ j_x.$
Then
\beqq
s_0=\inf \{\inf\{\mu_\tau(O(\xi_i, {{d_i/2}})):\tau\in T(A)\}: 1\le i\le N\}>0.
\eneqq

For each $i\in \{1,2,...,N\},$ choose  $g_i\in C(X)$ with $0\le g_i\le 1$  such that
$g_i(t)=1$ if $t\in O({{\xi_i}}, {{d_i}}/2)$ and $g_i(t)=0$ if $t\not\in O_i,$ $i=1,2,...,N.$
Note that $g_ig_j=0$ if $i\not=j.$
Put ${\cal F}_1={\cal F}\cup \{g_i: 1\le i\le N\}.$
Note $\{\xi_1, \xi_2,...,\xi_N\}$ is $\dt_{00}$-dense in $X.$


Choose  an integer $K>0$ such that $12 \cdot 2^8/K<\ep.$
Let $$\ep_1=\min\{\ep/2^{10}(N+1), \sigma/8(K+1)(N+1), s_0\ep/2^{10}(K+1)(N+1)\}.$$

Let $\eta>0$ and finite subset set ${\cal G}\subset C(X)$  be
given by  \ref{Lsbspliting} for $\ep_1/4$ (in place of $\ep$) and $\ep_1/2$ (in place of $\sigma$).
We may assume that ${\cal F}_1\subset {\cal G}.$
\Wlog, we may also assume that $\|g\|\le 1,$ if $g\in {\cal G}.$

Choose a finite subset ${\cal G}_X\subset C(X)$ (in place of ${\cal G}$)  and $\dt_1>0$ (in place of $\dt$) be given by  \ref{LLin} for ${\cal G}$ (in place of ${\cal F}_1$) and $\ep_2/2$ (in place of $\ep$) as well as $X.$
Put $\ep_3=\min\{\ep_2/4, \dt_1/2\}.$

Fix a finite subset ${\cal F}_A\subset A.$
Let us assume that $\|a\|\le 1$ if $a\in {\cal F}_A$ and
$$
{\cal F}_A\supset j_x({\cal G})\cup j_x({\cal G}_X)\cup\{y\}.
$$

Since $A$ has tracial rank zero,  there is a finite dimensional \SCA\, $F_1\subset A$ with
$1_F=p$ and an  ${\cal F}_A$-$\ep_2$-multiplicative \cpc\, $\psi: A\to F_1$ such that
\beqq
\|ap-pa\|<\ep_3\rforal a\in {\cal F}_A,\\\label{51-m-9}
\|a-((1-p)a(1-p)\oplus \psi(a))\|<\ep_3\rforal a\in {\cal F}_A,\\\label{51-m-10}
\tau(1-p)<\ep_3/16 \rforal \tau\in T(A).
\eneqq
By applying  \ref{LLin},
 there is a unital \hm\, $\phi': C(X)\to F$ such that
\beqq\label{51-m-10-1}
\|\phi'(g)-\psi(g)\|<\ep_2/2\rforal g\in {\cal G}.
\eneqq
Moreover,
\beqq
&&\sup\{|\tau \circ \phi'(g)-\tau(j_x(g))|: \tau\in T(A)\}<\ep_2/2\rforal g\in {\cal G}\andeqn\\\label{51-m-9++}
&&\tau(\phi'(g_i))>63s_0/64\rforal \tau\in T(A),\,\,i=1,2,...,N.
\eneqq

It follows from  \ref{Lsbspliting} that there exists a unital \cpc\, $L_1: F_1\to A$  and a  unital \hm\,
$\phi_1: C(X)\to F_1$ such that
\beqq\label{51-m-1000}
&&L_1(\phi_1(C(X)))\subset j_x(C(X)),\\\label{51-m-10+}
&&\|\phi_1(f)-\phi'(f)\|<\ep_1/4\andeqn\\\label{51-m-11}
&&\|L_1\circ \phi'(f)-j_x(f)\|<\ep_1/4\rforal f\in {\cal G}, \andeqn\\\label{51-m-12}
&&|\tau\circ L_1(c)-\tau(c)|\le (\ep_1/2)\|c\|\rforal \tau\in T(A)\andeqn c\in F_1.
\eneqq
By \eqref{51-m-9++} and \eqref{51-m-10+}, we  have
\beqq
\tau(\phi_1(g_i))\ge 15s_0/16\rforal \tau\in T(A).
\eneqq

Write $\phi_1(f)=\sum_{i=1}^{m'} f(t_i)p_i$ for all $f\in C(X),$   where $t_i\in X$ and
$p_1, p_2,...,p_{m'}$ are mutually orthogonal projections in $F.$
We may also write
\beqq
\phi_1(f)=\sum_{k=1}^N (\sum_{t_i\in O_k}f(t_i)p_i)+(\sum_{t_i\in X\setminus \cup_{k=1}^N O_k} f(t_i)p_i)
\eneqq
for all $f\in C(X).$
Note that
\beqq\label{51-m-14}
\sum_{t_i\in O_k}p_i\ge \phi_1(g_k),\,\,\,k=1,2,...,N.
\eneqq
Define $q_1=\sum_{t_i\in {\bar O}_1} p_i,$ $q_2=(1-q_1)(\sum_{t_i\in {\bar O}_2}p_i),
..., q_N=(1-\sum_{i=1}^{N-1}q_i)(\sum_{t_i\in {\bar O}_N}p_i).$
Note that
\beqq
q_k\ge \sum_{t_i\in O_k}p_i\ge \phi_1(g_k),\,\,\,k=1,2,...,N.
\eneqq
Define  \hm\, $\phi_2: C(X)\to F$ by
$\phi_2(f)=\sum_{i=1}^N f(\xi_i)q_i\rforal f\in C(X).$
We have (using \eqref{46-n-1})
\beqq\label{51-m-15-}
&&\|\phi_2(f)-\phi_1(f)\|<\ep/64\rforal f\in {\cal G}\andeqn\\\label{51-m-15}
&&\tau(q_i)>15s_0/16\rforal \tau\in T(A),\,\,\,i=1,2,...,N.
\eneqq
Note
that $\phi_2(C(X))\subset \phi_1(C(X)).$  Therefore $L_1(\phi_2(C(X)))\subset j_x(C(X)),$
in particular,
\beqq\label{51-m-18}
q_k\in \phi_2(C(X))\andeqn L_1(q_k)\in j_x(C(X)),\,\,\,k=1,2,...,N
\eneqq
By  \eqref{51-m-9}, \eqref{51-m-10-1},  \eqref{51-m-10+},  and  \eqref{51-m-15-},
since $\xi_1=0$ and \eqref{51-m-15},  applying {{ (2)}} of Lemma \ref{Ln25},
we have
\beqq\label{51-m-19}
\phi_2(z_x)\in_{4/(K+1)} \conv(\uu(x)).
\eneqq

Suppose that $\Phi: C(X)\to C(Y)$ is a unital positive linear map such that
\beqq\label{51-m-2}
&&\|\Phi(z_x)-y\|<\ep_4/4\andeqn\\\label{51-m-3}
&&\hspace{-0.4in}\sup\{|\tau\circ \Phi(g)-\tau(g)|:\tau\in T(A)\}<\ep_4/4
\rforal \tau\in T(A)
\eneqq
and for all
$g\in L_1(\psi({\cal F}_A))\cup \{L_1(q_i): 1\le i\le N\} .$
Note that, by \eqref{51-m-3} and \eqref{51-m-12},
\beqq
|\tau(\Phi(L_1(q_i)))-\tau(q_i)| &\le& |\tau(\Phi(L_1(q_i)))-\tau(L_1(q_i))|\\
&&\hspace{0.2in}+|\tau(L_1(q_i))-\tau(q_i)|\\\label{51-m-42}
&<&\ep_4/4+\ep_1/2<\ep_1.
\eneqq
Similarly, by \eqref{51-m-15},
\beqq\label{51-m-43}
\tau(\Phi(L_1(q_i)))>3s_0/4\rforal \tau\in T(A).
\eneqq

Put ${\cal F}_Y=L_1({\cal G})\cup \{L_1(q_i): 1\le i\le N\}\cup \{1, z_y\},$
where $z_y\in C(Y)$ is the identify function on $Y$ so that $j_y(z_y)=y.$
Let ${\cal G}_Y\subset C(Y)$ and $\dt_Y>0$ be given by \ref{LLin}
for ${\cal F}_Y$ (in place of ${\cal F}$) and $\ep_3$ (in place of $\ep$) as well as  $Y$ (in place of $X$).

Let $\ep_4=\min\{\ep_3/2, \dt_Y/2\}$ and $F_0=\C(1-p)\oplus F_1.$

Choose a large finite subset ${\cal F}_A'\subset A$ such that
${\cal F}_Y\cup {\cal G}_Y\subset {\cal F}_A'.$ We may also assume that
${\cal F}_A'$ contains an $\ep_3$-dense subset of the unit ball of $F_1.$
By choosing even smaller $\ep_4$ and larger ${\cal F}_A',$  since $F_1$ is projective, we may assume
that there is a \hm\, $h'$ from $F_1$ such that
\beqq\label{51-m-30}
\|h'(a)-L'(a)\|<\ep_3\|a\| \rforal a\in F_1
\eneqq
 for any ${\cal F}_A'$-$\ep_4$-multiplicative \cpc\, $L'$  from $F_1$ to any \CA.

Since $A$ has tracial rank zero,   by applying  \ref{Csec3for4}, there are finite dimensional \SCA\, $F_2$ with
$e=1_{F_2}$ and a  ${\cal F}_A'$-$\ep_4$-multiplicative \cpc\, $\psi_1: A\to F_2$ such that
\beqq
&&\|ae-ea\|<\ep_4\rforal a\in {\cal F}_A',\\\label{51-m-20}
&&\|a-((1-e)a(1-e)\oplus \psi_1(a))\|<\ep_4\rforal a\in {\cal F}_A',\\\label{51-m-21}
&&y\approx_{\ep_4} y_0+y_1,\,\,\, y_0=\sum_{i=1}^l \lambda_i e_i\,\,{\rm and}\,\, y_1\in {\cal N}((1-e)A(1-e)),\\\label{51-m-21+1}
&&\tau(1-e)<\ep_4/16 \rforal \tau\in T(A)\andeqn\\\label{51-m-21+2}
&&(2K+5)[1-e]\le [e_i],\,\,\,i=1,2,...,l.
\eneqq
We also assume that $\{\lambda_1, \lambda_2,...,\lambda_l\}$ is
$\ep_4$-dense in $Y.$
By \eqref{51-m-42} and \eqref{51-m-43},  as in the proof of  \ref{alpha}, by choosing sufficiently large ${\cal F}_A',$ we may assume
that
\beqq\label{51-m-25}
&&|t(\psi_1(\Phi\circ L_1(q_k)))-t(\psi_1(q_k))|<2\ep_1\rforal t\in T(F_2)\andeqn\\\label{51-m-25+}
&&t(\psi_1(q_k))\ge s_0/2\rforal t\in T(F_2),\,\,\, 1\le k\le N.
\eneqq
Moreover,
\beqq\label{51-m-26}
t(\psi_1(1-e))< \ep_3\rforal t\in T(F_2).
\eneqq
We may further assume that $\psi_1(b)\not=0$ for $b$ in an $\ep_3$-dense subset of the unit ball of $F_1.$

By  \ref{LLin}, there is a unital \hm\, $h_Y: C(Y)\to F_2$ such that
\beqq\label{51-m-27}
\|h_Y(b)-\psi_1(b)\|<\ep_3\rforal  b\in  {\cal F}_Y
\eneqq
as well as a normal element $x'\in (1-e)A(1-e)$ such that
\beqq\label{51-m-27++}
\phi_2(z_x)\approx_{2\ep_3} x'+h_Y(\phi_2(z_x)).
\eneqq
Note, that
\beqq\label{51-m-27+}
\|h_Y(z_y)-y_0\|<\ep_4+\ep_3+\ep_4<3\ep_3/2.
\eneqq
Moreover, there is  a \hm\, $h_F: F_1\to F_2$ such that
\beqq\label{51-m-28}
\|h_F(a)-\psi_1(a)\|\le \ep_3\|a\|\rforal a\in F_1.
\eneqq
Since $\psi_1(b)\not=0$ for those $b$ in an $\ep_3$-dense subset of the unit ball of $F_1,$
$h_F$ is injective. Define $L_F: h_F(\phi_2(C(X)))\to F_2$ by
$L_F=h_Y\circ \Phi\circ L_1\circ (h_F|_{h_F(\phi_2(C(X)))})^{-1}.$
$L_F$ is a \cpc. One can extends $L_F$ to be defined on $h_F(F_1).$ It can further extended
to a \cpc\, on $F_2.$
We note
that
\beqq\label{51-m-29}
L_F(h_F(q_k))=h_Y\circ \Phi\circ L_1(q_k)\andeqn L_F(h_F(\phi_2(z_x))=h_Y\circ \Phi\circ L_1(\phi_2(z_x)).
\eneqq
It follows from 
\eqref{51-m-27}, \eqref{51-m-25}   and \eqref{51-m-28} that
\beqq
&&\hspace{-0.3in}|t(L_F(h_F(q_k)))-t(h_F(q_k))|\le |t(h_Y(\Phi\circ L_1(q_k)))-t(\psi_1(\Phi\circ L_1(q_k)))|\\
&&+|t(\psi_1(\Phi_1\circ L_1(q_k)))-t(\psi_1(q_k))|+|t(\psi_1(q_k))-t(h_F(q_k))|\\\label{51-m-60}
&&<\ep_3+2\ep_1+\ep_3<3\ep_1<(s_0/4)(\ep/16),\,\,\, k=1,2,...,N.
\eneqq
Put ${\bar q}_0=(e-\sum_{i=2}^Nh_F(q_i))+h_F(q_1).$
Then, by \eqref{51-m-26} and by \eqref{51-m-28},
\beqq\label{51-m-63}
|t(L_F({\bar q}_0))-t({\bar q}_0)|<3\ep_1+2\ep_3<(s_0/4)(\ep/16)
\rforal t\in T(F_3).
\eneqq
By \eqref{51-m-28} and \eqref{51-m-25+},
\beqq\label{51-m-64}
t(h_F(q_k))\ge s_0/2-\ep_3\ge s_0/4\rforal t\in T(F_2).
\eneqq
Also
\beqq\label{51-m-65}
t({\bar q}_0)\ge t(q_1)\ge s_0/4 \rforal t\in T(F_3).
\eneqq
By \eqref{51-m-29}, \eqref{51-m-10+}, \eqref{51-m-11},  \eqref{51-m-15-},  and by \eqref{51-m-2}
\beqq
L_F(h_F\circ \phi_2(z_x))\approx_{\ep/32} h_Y\circ \Phi(j_x(z_x))\approx_{\ep_3/4} h_Y(y)\approx_{3\ep_3/4} y_0.
\eneqq
Let $\ep_5=\ep/32+\ep_3/4+3\ep_3/4.$
Note that $h_F\circ \phi_2(z_x)=\sum_{i=2}^N\lambda_i h_F(q_i)+0\cdot {\bar q}_0$ and $L_F$ now is defined
on $F_2.$
By applying {\ref{Ln43}}, we  obtain
a trace preserving unital \cpc\, $\Psi_F: F_2\to F_2$ such that
\beqq\label{51-m-71}
y_0\in_{2(\ep/16)+3\ep_5} \conv(\uu(h_F\circ \phi_2(z_x))).
\eneqq
By \eqref{51-m-21+2} and \ref{Ln04},
\beqq\label{51-m-72}
y\in_{(12/K)+\ep_4+\ep_4} \conv(\uu(y_0)).
\eneqq
By
the fact $\xi_1=0,$ and
by \eqref{51-m-64}, {{(2) of   \ref{Ln25}}}, and by \eqref{51-m-27++}
\beqq\label{51-m-73}
h_F(\phi_2(z_x))\in_{4/(K+1)+2\ep_3} \conv(\uu(\phi_2(z_x))
\eneqq
By \eqref{51-m-19}, \eqref{51-m-73}, \eqref{51-m-71} and \eqref{51-m-72}, we obtain
\beq
y\in_{\ep} \conv(\uu(x)).
\eneq

\end{proof}

\begin{corollary}\label{52cc}
Let $A$ be a unital separable {{simple}}  \CA\, with tracial rank zero and  with countably many
extremal tracial states.
Suppose that $x, y\in A$ are two normal elements with ${\rm sp}(x)=X$ and ${\rm sp}(y)=Y.$
Denote by $j_x: C(X)\to A$ and $j_y: C(Y)\to A$
the embedding given by $j_x(f)=f(x)$ for all $f\in C(X)$ and $j_y(g)=g(y)$ for all $g\in C(Y).$
Suppose
that there exists a sequence of  unital  positive linear maps $\Phi_n: C(X)\to C(Y)$ such that
\beqq
&&\lim_{n\to \infty}\|\Phi_n({{z_x}})-{{y}}\|=0\tand\\
&&\hspace{-0.2in}\lim_{n\to\infty}\sup\{|\tau(\Phi_n(f)(y))-\tau(f(x))|: \tau\in T(A)\}=0\tforal f\in C(X).
\eneqq
Then $y\in \overline{{\rm conv}({\cal U}(x))}.$
\end{corollary}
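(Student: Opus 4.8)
The plan is to reduce this corollary directly to Theorem \ref{51}. The only hypothesis of that theorem that is not already assumed here verbatim is that the induced embedding $j_x\colon C(X)\to A$ has the (SB) property: simplicity, separability, tracial rank zero, and the existence of the approximating sequence $\Phi_n$ of unital positive linear maps satisfying the two limit conditions are all carried over word for word. So the entire task is to manufacture the (SB) property from the new standing assumption that $A$ has countably many extremal tracial states.

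First I would invoke Lemma \ref{LSB}. The standing hypotheses there are that $A$ is a unital simple \CA\, with $T(A)\neq\emptyset$ and that $\partial_e(T(A))$ is countable; all three hold in the present setting, since tracial rank zero forces $T(A)\neq\emptyset$ and the countability of the extremal traces is assumed. Applying Lemma \ref{LSB} to the unital homomorphism $j_x\colon C(X)\to A$ (where $X=\sp(x)$) yields that $j_x$ has the (SB) property. This is the one genuine input of the proof; it is the mechanism by which countability of $\partial_e(T(A))$ lets one, for each $\dt>0$, choose a finite open cover of $X$ by sets of small diameter whose boundaries are null for every $\mu_\tau$, by selecting for each point a radius avoiding the countable family of measure-charging spheres.

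With $j_x$ now known to have the (SB) property, every hypothesis of Theorem \ref{51} is satisfied, and I would apply that theorem directly to the same data $x,y,X,Y,j_x,j_y$ and the same sequence $\Phi_n$ to conclude that $y\in\overline{\conv(\uu(x))}$. There is no real obstacle beyond checking that the hypotheses of Lemma \ref{LSB} are subsumed by those assumed here; the argument is purely a specialization, with countability of the extremal trace simplex serving precisely to upgrade the given approximation data to the (SB) framework that Theorem \ref{51} requires.
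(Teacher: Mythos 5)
Your proposal is correct and is exactly the argument the paper intends: the corollary is stated without a written proof precisely because it follows from Theorem \ref{51} once Lemma \ref{LSB} (countably many extremal traces implies every unital homomorphism from $C(X)$ has the (SB) property) is applied to $j_x$. All other hypotheses of Theorem \ref{51} are assumed verbatim, so the specialization is immediate.
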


{{
\begin{theorem}\label{Trr0}
Let $A$ be a unital separable  simple  \CA\, with real rank zero, stable rank one and
weakly unperforated $K_0(A),$ and let $x, y\in A$ be two normal elements.
Suppose either the embedding $j_x$ has (SB) property or $T(A)$ has countably many extremal points,
and suppose that
exists a sequence of  unital  positive linear maps $\Phi_n: C(X)\to C(Y)$ such that
\beqq
&&\lim_{n\to \infty}\|\Phi_n({{x}})-{{y}}\|=0\tand\\
&&\hspace{-0.2in}\lim_{n\to\infty}\sup\{|\tau(\Phi_n(f)(y))-\tau(f(x))|: \tau\in T(A)\}=0\tforal f\in C(X).
\eneqq
Then $y\in \overline{{\rm conv}({\cal U}(x))}.$
\end{theorem}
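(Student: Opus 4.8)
The plan is to follow the proofs of Theorems~\ref{51} and~\ref{52cc} essentially verbatim, isolating each place where the hypothesis $TR(A)=0$ was actually invoked and checking that it can be supplied instead by the three weaker regularity properties at hand. First I would dispose of the dichotomy in the hypothesis: if $T(A)$ has countably many extremal points, then \ref{LSB} (whose proof needs only $T(A)\neq\emptyset$ and $\partial_e(T(A))$ countable) shows that $j_x$ automatically has (SB) property, so in either case we may assume $j_x$ has (SB) and aim to reproduce the argument of \ref{51}.

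Next I would catalogue the ingredients of that argument and observe that almost all of them are already available. The convexity manipulations \ref{Ln25} and the pushing lemma \ref{Ln04} are stated and proved for unital simple \CA s with real rank zero and stable rank one, and their external inputs \cite{FR}, \cite{LinFU}, \cite{Lin3} require only these two properties; hence they apply unchanged. The comparison facts used repeatedly---most importantly the implication that $\tau(p)<\tau(q)$ for all $\tau\in T(A)$ forces $[p]\le[q]$, together with the derived inequalities of the form $(2K+5)[1-p]\le[p_i]$---are consequences of strict comparison of projections, which for a unital simple \CA\ with real rank zero and stable rank one holds precisely because $K_0(A)$ is weakly unperforated (see \cite{Linlondon}); thus they are available here. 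Likewise the (SB)-machinery \ref{LSBNep}, \ref{Lsbspliting}, \ref{2Lsbspliting} together with \ref{Lcp} uses only $T(A)\neq\emptyset$, simplicity, real rank zero and stable rank one (through \cite{Lincltr1} and \cite{CP}), so these lemmas carry over; in particular \ref{Lsbspliting} and \ref{2Lsbspliting} remain valid once a finite dimensional \SCA\ $F$ with the stated approximate containment of the discretizing homomorphism is provided.

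This leaves a single genuine use of $TR(A)=0$, namely the finite dimensional (Popa-type) approximation supplied by \ref{Csec3for4} and by the direct appeals to the second form of Definition~\ref{def2} inside the proof of \ref{51}. The plan is to replace \ref{Csec3for4} by the following substitute, which I would establish under the present hypotheses: for normal $x$, any $\ep>0$, any integer $K$, and any nonzero projection $r$, there is a decomposition $x\approx_\ep x'+x''$ with $x'=\sum_i\lam_i p_i$ a finite-spectrum normal carried by mutually orthogonal projections $p_i\in A$, with $\{\lam_i\}$ $\ep$-dense in $\sp(x)$, with $x''\in\nnn((1-p)A(1-p))$ where $p=\sum_i p_i$, and with $(2K+5)[1-p]\le[p_i]$ and $(2K+5)[1-p]\le[r]$. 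The point is that the matrix blocks produced by the Popa decomposition in \ref{Csec3for4} are only ever used to extract rank-one projections and to write $x'$ in finite-spectrum form, so it suffices to build the commutative datum $C^*(\{p_i\})$ directly. For this I would use real rank zero to manufacture the abundance of projections, \cite{FR} (via the (IR) property of hereditary subalgebras, which holds by \cite{Lin3}) to perturb the relevant corners to normal elements with spectra in one-dimensional finite CW complexes, and Lemma~3 of \cite{LinFU} to discretize to finite spectrum after channelling the $K_1$-valued index into a corner of arbitrarily small trace; the comparison inequalities are then forced by simplicity together with weak unperforation as above. With this substitute in hand one may take every finite dimensional algebra $F$ appearing in \ref{51} and \ref{52cc} to be the abelian algebra $C^*(\{p_i\})$, in which case the conditional expectations used in \ref{Lsbspliting} and \ref{2Lsbspliting} are trivial and the homomorphisms $\phi_1,\phi_2,\phi',h_Y$ become the evident point-evaluation maps; the rest of the proof of \ref{51} (the applications of \ref{Ln04}, \ref{Ln25}, \ref{Ln43} and the trace bookkeeping leading to $y\in_\ep\conv(\uu(x))$) then proceeds word for word.

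The main obstacle I anticipate is exactly this substitute for \ref{Csec3for4}: producing the finite-spectrum-plus-small-corner decomposition of the \emph{normal} element with the comparison inequalities intact, simultaneously and compatibly for $x$, for $y$, and for the auxiliary elements entering the trace estimates, while the $K_1$-index obstruction is pushed into a controllably small corner. This is where the combination of \cite{LinFU} (to handle the index) and weak unperforation (to recover strict comparison, which $TR(A)=0$ had previously guaranteed for free) must be orchestrated with care; once it is done, no further appeal to tracial rank zero remains and the two earlier theorems apply to yield $y\in\overline{\conv(\uu(x))}$.
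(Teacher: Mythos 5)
Your plan diverges from the paper's proof at the decisive point, and the divergence opens a genuine gap. You correctly observe that \ref{Ln25}, \ref{Ln04}, \ref{Lcp}, \ref{LSBNep}--\ref{2Lsbspliting} and the comparison facts survive under real rank zero, stable rank one and weak unperforation; but the substitute you propose for \ref{Csec3for4} is both unestablished and, even in the form you state it, insufficient. First, the tools you cite do not produce the single-element decomposition $x\approx_\ep x'+x''$ with $x'=\sum_i\lam_ip_i$ and $(2K+5)[1-p]\le[p_i]$: Lemma 3 of \cite{LinFU} and 3.4 of \cite{FR} are used in \ref{Ln04} to \emph{modify} a normal element already confined to a small corner (the splitting is part of the hypotheses there), whereas what is needed is a projection $p$ of large trace that approximately commutes with $x$ and compresses $x$ to a finite-spectrum element of $pAp$ --- and manufacturing that $p$ is precisely where \ref{Csec3for4} invokes the tracial rank zero decomposition. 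Second, and more fatally, the proof of \ref{51} never applies the decomposition only to elements of $C^*(x)$: the first finite set satisfies ${\cal F}_A\supset j_x({\cal G})\cup j_x({\cal G}_X)\cup\{y\}$, and the second decomposition (producing $F_2$, $\psi_1$, $h_Y$, $h_F$) must approximately multiplicatively compress the non-commuting family ${\cal F}_A'$, which contains ${\cal F}_Y\cup{\cal G}_Y$ together with an $\ep_3$-dense subset of the unit ball of $F_1$ --- that is, functions of $y$ alongside the finite dimensional algebra built from $x$. A substitute valid for one normal element at a time cannot feed these steps; what is required is the Popa-type approximation for \emph{arbitrary} finite subsets, which is essentially the definition of $TR(A)=0$. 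Assuming it here is circular: if it held, $A$ would be tracially AF and the theorem would already be \ref{51}.

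The paper proves \ref{Trr0} by a reduction rather than a re-derivation, and this is the idea your proposal is missing. By Theorem 4.5 of \cite{LinKK} there is a unital simple AH-algebra $B$ of real rank zero with no dimension growth and a unital monomorphism $H:B\to A$ inducing an isomorphism of $(K_0,K_0^+,[1],K_1)$; by \cite{BH} (density of $\rho(K_0)$ in the affine function spaces) $H$ also induces an affine isomorphism $\Aff(T(B))\cong\Aff(QT(A))$. Then 5.3 of \cite{Linrange} provides normal elements $x_1,y_1\in B$ with the same spectra, $KL$-data and tracial data as $x,y$, and 5.6 of \cite{HL} shows that $x_1$ and $x$ (respectively $y_1$ and $y$) are approximately unitarily equivalent in $A$. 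Since $B$ has tracial rank zero by \cite{Linlondon}, Theorems \ref{51} and \ref{52cc} apply verbatim inside $B$ to give $y_1\in\overline{\conv(\uu(x_1))}$, and approximate unitary equivalence together with \ref{lem00} transports the conclusion back to $x$ and $y$ in $A$. If you wish to salvage a direct argument you would first have to prove the simultaneous finite-set decomposition under the stated hypotheses, which amounts to proving $TR(A)=0$; the embedding-theorem reduction is what makes the weaker hypotheses usable.
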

}}

\begin{proof}
{{It follows from  Theorem 4.5 of \cite{LinKK} that
there exists a unital simple AH-algebra $B$ with real rank zero and no dimension growth
such that there is a unital monomorphism $H: B\to A$ which induces the following identification:
\beq
(K_0(B), K_0(B)_+, [1_B], K_1(B))=(K_0(A), K_0(A)_+, [1_A], K_1(A)).
\eneq
Since both $A$ and $B$ have real rank zero,
by \cite{BH}, $\rho_A(K_0(A))$ is dense in $\Aff(QT(A))$ and
$\rho_B(K_0(B))$ is dense in $\Aff(T(B)).$ It follows that $H$ induces
an affine isomorphism $H_{\sharp}$ from $\Aff(T(B))$ onto  $\Aff(QT(A)).$}}

{{Fix $x, y\in {\cal N}(A).$ Let $X={\rm sp}(X)$ and let $j_x: C(X)\to A$ be the embedding
given by $j_x(f)=f(x)$ for all $f\in C(X),$ and let $\gamma: C(X)_{s.a.}\to \Aff(QT(A))$
be given by $\gamma(f)(\tau)=\tau(j_x(f))$ for all $\tau\in QT(A).$
Note that $[j_x]$ and $\gamma$ are compatible. Note also
that $\tau\circ j_x$ is a tracial state on $C(X).$
It follows from 5.3 of \cite{Linrange} that there is a normal element
$x_1\in B$ with $\sp(x_1)=X$ and $[j_{x_1}]=[j_x]$ and
$\tau(j_{x_1}(f))=H_{\sharp}^{-1}\circ \gamma(f)(\tau)$ for all $\tau\in QT(A),$
where $j_{x_1}: C(X)\to B\subset A$ is induced by $x_1.$
Then, by Theorem 5.6 of \cite{HL} ($T(A)$ there should be $QT(A)$),
$x_1$ and $x$ are approximately unitarily equivalent.}}

{{Exactly the same argument shows that there is $y_1\in {\cal N}(B)$ such that $y_1$ and $y$ are approximately
unitarily equivalent. and  there exists a unital injective \hm\,
$j_{y_1}: C(Y)\to B\subset A$ is induced by $y_1,$
}}
Note that, by \cite{Linlondon}, $B$ has tracial rank zero.
{{By \ref{51} or \ref{52cc},
 $x_1\in \overline{\conv({\cal U}(y_1))}.$
 It follows that $x_1\in \overline{\conv({\cal U}(y))},$ whence $x\in \overline{\conv({\cal U}(y))}.$}}
\end{proof}

\begin{theorem}\label{52-m}
Let $A$ be a unital separable simple \CA\, with tracial rank zero and with a unique tracial state.
Suppose that $x, y\in A$ are two normal elements with ${\rm sp}(x)=X$ and ${\rm sp}(y)=Y.$
Denote by $j_x: C(X)\to A$ and $j_y: C(Y)\to A$
the embedding given by $j_x(f)=f(x)$ for all $f\in C(X)$ and $j_y(g)=g(y)$ for all $g\in C(Y).$
Suppose
that there exists a sequence of  unital  completely positive linear maps $\Phi_n: A\to A$ such that
\beqq
&&\lim_{n\to \infty}\|\Phi_n({{z_x}})-{{z_y}}\|=0\tand\\
&&{ {\lim_{n\to\infty}|\tau(\Phi_n(a))-\tau(a)|}}=0\tforal a\in A.
\eneqq
Then there exists a sequence of unital positive linear maps $\Psi: C(X)\to C(Y)$ such
that
\beq
&&\lim_{n\to \infty}\|\Psi_n(z_x)-{{y}}\|=0\tand\\
&&\lim_{n\to\infty}|\tau(\Psi_n(f)(y)))-\tau(f(x))|=0\tforal f\in C(X).
\eneq

\end{theorem}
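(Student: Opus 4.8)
The strategy is to run the matrix construction of Lemma \ref{Ln43} in reverse: from the maps $\Phi_n$ on $A$ I would extract, through traces of compressions by spectral projections, a row-stochastic matrix, and then interpolate it over $Y$ by a partition of unity to obtain a unital positive map $C(X)\to C(Y)$. It suffices to produce, for each $\ep>0$ and each finite ${\cal F}\subset C(X)$, a single unital positive linear $\Psi:C(X)\to C(Y)$ with $\|j_y(\Psi(z_x))-y\|<\ep$ and $|\tau(j_y(\Psi(f)))-\tau(j_x(f))|<\ep$ for all $f\in{\cal F}$; a diagonal argument then yields the sequence. Write $\mu_x=\tau\circ j_x$ and $\mu_y=\tau\circ j_y$ for the spectral measures of $x$ and $y$ determined by the unique trace $\tau$.

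First I would discretize. Using $TR(A)=0$ and \ref{Csec3for4}, fix a finite dimensional \SCA\, $B\subset A$ with unit $p$ and normal elements $x\approx_{\ep_0}x'+x''$, $y\approx_{\ep_0}y'+y''$, where $x'=\sum_{i=1}^h\lam_ip_i$ and $y'=\sum_{j=1}^N\mu_jq_j$, the $\lam_i\in X$ (resp.\ $\mu_j\in Y$) are $\ep_0$-dense with distinct values, the $p_i$ and the $q_j$ are mutually orthogonal projections with sum $p$, and $x'',y''\in(1-p)A(1-p)$; here $\tau(1-p)$ is as small as we wish and $s_0:=\min_j\tau(q_j)>0$ is bounded below in terms of $\ep_0$ by simplicity. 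With $B$ fixed, choose $n$ large and set $\Phi:=\Phi_n$, so that $\|\Phi(x)-y\|<\ep_0$ and $|\tau(\Phi(a))-\tau(a)|<\ep_0$ for every $a$ in the finite set $\{1,p_1,\dots,p_h,q_1,\dots,q_N,x',{x''}^*x''\}$.

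Now define
\[
m_{ji}=\frac{\tau(q_j\,\Phi(p_i))}{\tau(q_j)},\qquad 1\le i\le h,\ 1\le j\le N .
\]
Since $q_j\ge0$ and $0\le\Phi(p_i)\le1$ one has $0\le m_{ji}\le1$. Three estimates are needed. Using $\sum_ip_i=p$, $\Phi(1)=1$ and $\tau\circ\Phi\approx\tau$, one gets $\sum_im_{ji}\approx1$, so after dividing each row by its sum we may assume $(m_{ji})$ is exactly row-stochastic at the cost of an $O(\ep_0)$ error. The barycentre identity
\[
\sum_im_{ji}\lam_i=\frac{\tau(q_j\,\Phi(x'))}{\tau(q_j)}\approx\frac{\tau(q_j\,y')}{\tau(q_j)}=\mu_j
\]
follows from $\Phi(x')\approx\Phi(x)\approx y\approx y'$ together with $q_jy'=\mu_jq_j$, while the marginal identity
\[
\sum_j\tau(q_j)\,m_{ji}=\tau\big(p\,\Phi(p_i)\big)\approx\tau(\Phi(p_i))\approx\tau(p_i)
\]
uses $\sum_jq_j=p$ and $\tau\circ\Phi\approx\tau$. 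The contributions of the remainders $x'',y''$ are handled by the Kadison--Schwarz inequality: for $c=(1-p)c(1-p)$ with $\|c\|\le1$ one has $|\tau(q_j\Phi(c))|\le\tau(q_j)^{1/2}\tau(\Phi(c^*c))^{1/2}\lesssim\tau(q_j)^{1/2}\tau(1-p)^{1/2}$, which after division by $\tau(q_j)\ge s_0$ is small once $\tau(1-p)\ll(\ep_0s_0)^2$.

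Finally I would assemble the map. Since $A$ has a unique, hence countably many extremal, tracial state, $j_y$ has the (SB) property by \ref{LSB}; so the points $\mu_j$ may be surrounded by a partition of unity $\{\rho_j\}_{j=1}^N\subset C(Y)$, $0\le\rho_j\le1$, $\sum_j\rho_j=1$, peaked at $\mu_j$, of small mesh, and subordinate to a cover whose cells have $\mu_y$-null boundaries, so that $\int_Y\rho_j\,d\mu_y\approx\tau(q_j)$ in total variation. Put
\[
\Psi(f)=\sum_{j=1}^N\Big(\sum_{i=1}^hm_{ji}f(\lam_i)\Big)\rho_j\in C(Y)\qquad(f\in C(X)).
\]
Then $\Psi$ is unital (row-stochasticity and $\sum_j\rho_j=1$) and positive (all coefficients are $\ge0$). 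The barycentre identity gives $\Psi(z_x)=\sum_j(\sum_im_{ji}\lam_i)\rho_j\approx\sum_j\mu_j\rho_j\approx z_y$ uniformly on $Y$, whence $\|j_y(\Psi(z_x))-y\|<\ep$; and $\tau(j_y(\Psi(f)))=\sum_i(\sum_j(\int_Y\rho_j\,d\mu_y)m_{ji})f(\lam_i)\approx\sum_i\tau(p_i)f(\lam_i)$, a sum approximating $\int_Xf\,d\mu_x=\tau(j_x(f))$ to within the modulus of continuity of $f$ on the $\ep_0$-dense set $\{\lam_i\}$, giving the trace estimate for $f\in{\cal F}$.

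The main obstacle is not a single deep step but the coordinated choice of constants: one must fix $\ep_0$ (hence a lower bound $s_0$), then make $\tau(1-p)$ small compared with $(\ep_0s_0)^2$, and only then choose $n$, so that all three approximate properties of $(m_{ji})$ survive the row normalization and transfer, through the partition of unity, into genuine sup-norm and trace estimates on $C(Y)$. The most delicate point is aligning the discrete weights $\tau(q_j)$ with the continuous weights $\int_Y\rho_j\,d\mu_y$ in $\ell^1$, which is exactly what the small-boundary choice afforded by the (SB) property of $j_y$ secures; uniqueness of the trace is what allows the single measure pair $(\mu_x,\mu_y)$, and hence the single matrix $(m_{ji})$, to control every trace appearing in the argument.
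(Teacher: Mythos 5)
Your proposal is correct in outline but follows a genuinely different route from the paper. The paper never extracts a stochastic matrix: it simply composes the given map with two structural maps, namely the tracial-rank-zero compression $\psi\colon A\to F$ onto a finite dimensional subalgebra (corrected to a homomorphism $\phi\colon C(Y)\to F$ via Lemma \ref{LLin}) and the trace-almost-preserving retraction $L\colon F\to j_y(C(Y))$ supplied by Lemma \ref{2Lsbspliting}, and defines $\Psi=L\circ\psi\circ\Phi\circ j_x$; unitality, complete positivity and both estimates then fall out of the properties of $L$ with almost no computation. You instead run Lemma \ref{Ln43} backwards: you read off an approximately row-stochastic matrix $m_{ji}=\tau(q_j\Phi(p_i))/\tau(q_j)$ from compressions by spectral projections and reassemble it over $Y$ with a partition of unity. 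Your barycentre and marginal identities are correct (the Kadison--Schwarz control of the $(1-p)$-corner and the quantifier order $\ep_0\to s_0\to\tau(1-p)\to n$ are exactly the right precautions), and what your construction buys is an explicit, measure-theoretic description of $\Psi$ as an integral kernel, at the price of only producing a positive (rather than manifestly completely positive) map --- which is all the theorem asks for, and is automatic anyway since $C(X)$ is commutative.

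Two points in your write-up conceal real work. First, the single clause ``so that $\int_Y\rho_j\,d\mu_y\approx\tau(q_j)$ in total variation'' is essentially the entire content of Lemma \ref{2Lsbspliting}: the projections $q_j$ come from the finite dimensional approximation of $A$, not from the partition cells, and matching $\tau(q_j)$ with $\mu_y(O_j)$ requires the $Y_j$, $Y_j'$ bookkeeping and the small-boundary estimate \eqref{5Lfinited-2}; you correctly name the (SB) property as the tool, but you should either cite \ref{2Lsbspliting} or reproduce that argument. Second, for $\int_Y\rho_j\,d\mu_y\approx\tau(q_j)$ you implicitly need each support $\mathrm{supp}\,\rho_j$ to meet $\{\mu_1,\dots,\mu_N\}$ only in $\mu_j$, while for $\sum_j\mu_j\rho_j\approx z_y$ you need the supports to have diameter $O(\ep_0)$; since distinct $\mu_j$'s may be much closer than $\ep_0$, you must first merge nearby spectral values (replacing several $q_k$ by their sum, as is done inside the proof of Lemma \ref{Csec3for4}) so that the two requirements are compatible. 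With those two repairs made explicit, your argument goes through.
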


\begin{proof}
Let $\ep>0,$  $\sigma>0$ and let ${\cal F}_X\subset C(X)$ be a finite subset {{in the unit ball.}}
Denote by $z_x$ the identify function on $X$ and $z_y$ the identity function on $Y.$
Put ${\cal F}_Y=\{1, z_y\}\subset C(Y).$
Let $\eta>0$ and ${\cal G}_Y\subset C(Y)$ be finite subset given by  for $\ep/2$ (in place of $\ep$)
and $\sigma/2$ (in place of $\sigma$), and $Y$ in place of $X.$  We may assume that ${\cal F}_Y\subset {\cal G}_Y.$

Put $\ep_1=\min\{\ep/16, \sigma/16, \eta/16\}.$ Let $\tau$ be the unique tracial state of $A.$

Let $\Phi: A\to A$ be a unital completely positive linear map
such that
\beqq\label{52-m-12}
&&\|\Phi(z_x)-y\|<\ep_1\andeqn\\\label{52-m-13}
&&|\tau(\Phi({{j_x(f))}})-\tau({{j_x(f)}})|<\ep_1\rforal f\in {\cal F}_X.
\eneqq
Let ${\cal F}_A={\cal F}_X\cup {\cal G}_Y\cup \Phi({\cal F}_X).$ \Wlog, we may assume
that ${\cal F}_A$ is in the unit ball of $A.$
Since $A$ has tracial rank zero, there is a finite dimensional \SCA\, $F\subset A$ with
$1_F=p$ and a \cpc\, $\psi: A\to F$  such that
\beqq
&&\|pa-ap\|<\ep_1\rforal a\in {\cal F}_A,\\
&&\|a-((1-p)a(1-p)+\psi(a))\|<\ep_1\rforal a\in {\cal F}_A\andeqn\\
&&\tau(1-p)<\ep_1.
\eneqq
\Wlog, by Lemma \ref{LLin}, we may assume that
there exists a unital \hm\, $\phi: C(Y)\to F$
such that
\beqq
\|\phi(g)-\psi(g)\|<\ep_1\rforal g\in {\cal G}_Y.
\eneqq
We also have
\beqq\label{52-m-5}
&&|\tau(\psi(a))-\tau(a)|<3\ep_1\rforal a\in {\cal F}_A\andeqn\\
&&|\tau(\phi(g))-\tau(j_y(g))|<4\ep_1\rforal g\in {\cal G}_Y.
\eneqq
Let $C=\phi(C(Y))$ be the \SCA\, of $F.$
By applying  Lemma \ref{2Lsbspliting}, we obtain a unital \cpc\, $L: F\to C(Y)$ such that
\beqq\label{52-m-10}
&&\|L\circ \phi(f)-j_y(f)\|<\ep/2\rforal f\in {\cal G}_Y\andeqn\\\label{52-m-11}
&&\hspace{-0.2in}|\tau\circ L(b)-\tau(b)|\le (\sigma/2)\|b\|
\rforal b\in F.
\eneqq

Let $S\subset F$ be a finite subset which is
$\ep_1$-dense in the unit ball of $F.$
Define $\Psi: C(X)\to C(Y)$ by
$\Psi(f)=L\circ \psi\circ \Phi(j_x(f))$ for $f\in C(X).$
Then, by \eqref{52-m-12} and \eqref{52-m-10},
\beqq
&&\|\Psi(z_x)-y\|\le \|L\circ \psi\circ \Phi(x)-L\circ \psi(z_y)\|+\|L\circ \psi(z_y)-y\|\\
&&<\|\Phi(x)-z_y\|+\ep/2<\ep_1+\ep/2<\ep.
\eneqq
Moreover, by \eqref{52-m-11}, \eqref{52-m-5}  and \eqref{52-m-13}
\beq
&&|\tau\circ \Psi(f)-\tau(j_x(f))|\le |\tau(L\circ \psi\circ \Phi(j_x(f)))-\tau(\psi\circ \Phi(j_x(f)))|\\
&&+
|\tau(\psi\circ \Phi(j_x(f)))-\tau(\Phi(j_x(f))))|
+|\tau(\Phi(j_x(f)))-\tau(j_x(f))|\\
&&\le \sigma/2+3\ep_1+\ep_1<\sigma\rforal f\in {\cal F}_X.
\eneq

\end{proof}

Let $X$ be a compact metric space. Denote by
$M(X)^{\bf 1}$ the set of all probability Borel measures.

Let $A$ be a unital simple \CA\, with $T(A)\not=\emptyset,$  and let $x\in A$ be normal element.
Denote by $j_x: C(X)\to A$ the embedding.
For each $\tau\in T(A),$ denote by $\mu_{\tau, X}$ (or just $\mu_\tau$) the  probability Borel measure induced by
$\tau\circ j_x.$
Define $T_X=\{\mu_{\tau, X}: \tau\in T(A)\}.$

\begin{theorem}\label{T5}
Let $A$ be a unital separable simple \CA\, with tracial rank zero
and with  unique tracial state $\tau.$
Suppose
that $x$ and $y$ are two normal elements
with $X={\rm sp}(x)$ and $Y={\rm sp}(y).$
Let $z_x\in C(X)$ be the identity function on $X$ and
let $z_y\in C(Y)$  be the identity function on $Y.$
Then the following are equivalent:

(1) $y\in \overline{\conv({\cal U}(x))};$

(2)  There exists a sequence of unital trace preserving  completely positive linear
maps $\Phi_n: A\to A$ such that
\beq
\lim_{n\to\infty}\|\Phi_n(x)-y\|=0;
\eneq

(3) There exists a sequence of unital completely positive linear maps
$\Phi_n: A\to A$ such that
\beq
&&\lim_{n\to\infty}\|\Phi_n(x)-y\|=0\tand\\
&&\lim_{n\to\infty}|\tau(\Phi_n(a))-\tau(a)|=0\tforal a\in A;
\eneq

(4) There exists a sequence of unital completely positive linear maps
$\Psi_n: C(X)\to C(Y)$ such that
\beqq\label{5f4-1}
&&\lim_{n\to\infty}\|\Psi_n(z_x)-z_y\|=0\tand\\\label{5f4-2}
&&\lim_{n\to\infty}\Big|\tau(\Psi_n(f)(y)))-\tau(f(x))\Big|=0\tforal f\in C(X);
\eneqq

(5) There exists a sequence of affine continuous maps $\gamma_n: M(Y)^{\bf 1} \to M(X)^{\bf 1}$
such that
\beqq\label{5f5-1}
&&\lim_{n\to\infty}\sup \{|\int_X x d(\gamma_n(\mu))-\int_Y y d\mu|: \mu\in M(Y)^{\bf 1}\}=0\andeqn\\\label{5f5-2}
&&\lim_{n\to\infty}\Big|\int_X f d\mu_{\tau, X}-\int_X f d \gamma_n(\mu_{\tau, Y})\Big|=0
\eneqq
for all $f\in C(X).$


\end{theorem}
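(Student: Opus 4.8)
The plan is to close a cycle of implications among (1)--(4) by invoking the results already assembled, and then to identify (4) with (5) through the standard duality between unital positive maps of commutative $C^*$-algebras and affine continuous maps of their state spaces. Since $A$ is unital, separable, simple, of tracial rank zero and carries a unique tracial state $\tau$, we have $T(A)=\{\tau\}$, so every supremum over $T(A)$ collapses to its single value at $\tau$. With this observation the equivalence $(1)\Leftrightarrow(2)\Leftrightarrow(3)$ is precisely Corollary \ref{beta}, applied with the roles of $x$ and $y$ interchanged: the membership $y\in\overline{\conv(\uu(x))}$, the existence of a trace preserving approximating sequence $\Phi_n(x)\to y$, and the existence of an approximately trace preserving one are its three listed conditions verbatim.

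For $(3)\Rightarrow(4)$ I would invoke Theorem \ref{52-m} directly: a sequence of unital completely positive $\Phi_n\colon A\to A$ with $\Phi_n(x)\to y$ and $\tau(\Phi_n(a))\to\tau(a)$ yields unital positive maps $\Psi_n\colon C(X)\to C(Y)$ with $\Psi_n(z_x)\to z_y$ and $\tau(\Psi_n(f)(y))\to\tau(f(x))$ for all $f\in C(X)$; since $C(X)$ and $C(Y)$ are commutative, each such $\Psi_n$ is automatically completely positive, which is exactly (4). For $(4)\Rightarrow(1)$ I would apply Corollary \ref{52cc}: a unique tracial state means $A$ has countably many (indeed one) extremal tracial states, so its hypothesis is met, and feeding it the sequence $\Psi_n$ of (4) returns $y\in\overline{\conv(\uu(x))}$. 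This closes the equivalence of (1)--(4).

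Finally, $(4)\Leftrightarrow(5)$ is a formal translation. Identifying $M(X)^{\bf 1}$ and $M(Y)^{\bf 1}$ with the state spaces of $C(X)$ and $C(Y)$ by the Riesz representation theorem, a unital positive $\Psi_n\colon C(X)\to C(Y)$ corresponds to the affine weak-$*$ continuous map $\gamma_n\colon M(Y)^{\bf 1}\to M(X)^{\bf 1}$ defined by $\gamma_n(\mu)=\mu\circ\Psi_n$; this assignment is a bijection onto all affine continuous maps, with inverse $\Psi_n(f)(y')=\int_X f\,d\gamma_n(\delta_{y'})$. Under this correspondence $\int_X f\,d\gamma_n(\mu)=\int_Y\Psi_n(f)\,d\mu$ for every $f\in C(X)$. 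Taking $f=z_x$ and the supremum over $\mu\in M(Y)^{\bf 1}$ gives $\sup_\mu|\int_X z_x\,d\gamma_n(\mu)-\int_Y z_y\,d\mu|=\|\Psi_n(z_x)-z_y\|$, matching the first requirements of (4) and (5); taking $\mu=\mu_{\tau,Y}$ gives $\int_X f\,d\gamma_n(\mu_{\tau,Y})=\tau(\Psi_n(f)(y))$ and $\int_X f\,d\mu_{\tau,X}=\tau(f(x))$, matching the second. Hence (4) and (5) express the same condition. The one point that demands care is the surjectivity of $\Psi_n\mapsto\gamma_n$, namely that an \emph{arbitrary} affine continuous $\gamma_n$ actually arises from a unital positive map: this rests on the barycentre identity $\gamma_n(\mu)=\int_Y\gamma_n(\delta_{y'})\,d\mu(y')$, valid because continuous affine maps between compact convex sets preserve barycentres, which is what lets one verify $\gamma_{\Psi_n}=\gamma_n$. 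Everything else is a direct citation of \ref{beta}, \ref{52-m} and \ref{52cc}.
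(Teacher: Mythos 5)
Your proposal is correct and follows essentially the same route as the paper: (1)$\Leftrightarrow$(2)$\Leftrightarrow$(3) by Corollary \ref{beta} (with $x$ and $y$ interchanged), (3)$\Rightarrow$(4) by Theorem \ref{52-m}, (4)$\Rightarrow$(1) by Corollary \ref{52cc}, and (4)$\Leftrightarrow$(5) by the duality $\int_X f\,d(\gamma_n(\mu))=\int_Y\Psi_n(f)\,d\mu$ between unital positive maps $C(X)\to C(Y)$ and affine continuous maps $M(Y)^{\bf 1}\to M(X)^{\bf 1}$. Your extra remark on the barycentre identity only makes explicit a point the paper leaves implicit in passing from $\gamma_n$ back to $\Psi_n$.
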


\begin{proof}
That (1), (2) and (3) are equivalent follows from \ref{beta}.
That  (3) implies (4) follows from \ref{52-m}.

Suppose that (4) holds. Define $\gamma_n: M(Y)^{\bf 1}\to M(X)^{\bf 1}$
by $\int_X f d(\gamma_n(\mu))=\int_Y \Psi_n(f) d\mu$ for all $\mu\in M(Y)^{\bf 1}.$
Clearly $\gamma_n$ is a continuous affine map.
Denote by $S(C(Y))$ the state space of $C(Y).$
Then,  by \eqref{5f4-1},
\beq
\lim_{n\to\infty}\sup\{|s(\Psi_n(x))-s(z_y)|: s\in S(C(Y))\}=0,
\eneq
Since one may identify $S(C(Y))$ with $M(Y)^{\bf 1},$ \eqref{5f5-1} follows.
It is also clear that \eqref{5f5-2} follows from \eqref{5f4-2}.  Thus (5) holds.

We now show that (5) implies (4).
If (5) holds, define
$\Psi_n(f)(s)=\gamma_n(s)(f)$ for all $s\in S(C(Y))=M(Y)^{\bf 1}$ and any $f\in C(X).$
Then \eqref{5f5-1} implies that
\beq
\lim_{n\to\infty}\sup\{|\Psi_n(z_x)(s)-z_y(s)|: s\in S(C(Y))\}=0.
\eneq
However,
\beq
\sup\{|\Psi_n(z_x)(s)-z_y(s)|: s\in S(C(Y))\}=\|\Psi_n(z_x)-y_z\|.
\eneq
It follows that \eqref{5f4-1} holds.  Also \eqref{5f4-2} follows from \eqref{5f5-2}.

It remains to show that (4) implies (1) which follows from \ref{52cc}.

\end{proof}

\begin{corollary}\label{CC5end}
Let $A$ be a unital separable simple \CA\, with real rank zero, stable rank one, weakly unperforated
$K_0(A)$
and with  unique  quasi-trace $\tau$ such that  $\tau(1_A)=1.$
Suppose
that $x$ and $y$ are two normal elements
with $X={\rm sp}(x)$ and $Y={\rm sp}(y).$
Then  (1), (4) and (5) in \ref{T5} are  also equivalent,
by replacing the tracial state by the quasi-trace.
\end{corollary}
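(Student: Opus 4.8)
The plan is to reduce, exactly as in the proof of \ref{Trr0}, to a canonical simple AH-model and then quote \ref{T5}. First I would use Theorem 4.5 of \cite{LinKK} to produce a unital simple AH-algebra $B$ with real rank zero and no dimension growth, together with a unital monomorphism $H\colon B\to A$ inducing an isomorphism of scaled ordered $K$-theory; by \cite{Linlondon} the algebra $B$ has tracial rank zero, and by \cite{BH} the map $H$ induces an affine isomorphism $H_\sharp\colon \Aff(T(B))\to \Aff(QT(A))$. Since $A$ has a unique quasi-trace, $\Aff(QT(A))$ is one-dimensional, so $B$ carries a \emph{unique} tracial state $\tau_B$. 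Using 5.3 of \cite{Linrange} together with Theorem 5.6 of \cite{HL}, I would then manufacture normal elements $x_1,y_1\in B$ with $\sp(x_1)=X$ and $\sp(y_1)=Y$, with $x_1$ approximately unitarily equivalent to $x$ and $y_1$ approximately unitarily equivalent to $y$ in $A$, and, from the defining property of $H_\sharp^{-1}$, with matching distributions $\mu_{\tau_B,X}=\mu_{\tau,X}$ and $\mu_{\tau_B,Y}=\mu_{\tau,Y}$.

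Several implications are then essentially free. The equivalence of (4) and (5) is purely measure-theoretic: the passages in \ref{T5} between (4) and (5) never use tracial rank zero and see $\tau$ only through the measures $\mu_{\tau,X},\mu_{\tau,Y}$, on which $\tau$ restricts to a genuine state, so they survive verbatim for the quasi-trace. Because the distributions agree, (4) and (5) for $(x,y)$ in $A$ are literally the same statements as (4) and (5) for $(x_1,y_1)$ in $B$. For (4)$\Rightarrow$(1) I would apply \ref{52cc} inside $B$ (legitimate since $B$ has tracial rank zero and a unique, hence countably many extremal, tracial state) to obtain $y_1\in\overline{\conv(\uu_B(x_1))}$; then $\uu_B(x_1)\subset \uu_A(x_1)$ yields $y_1\in\overline{\conv(\uu_A(x_1))}$, which by approximate unitary equivalence is exactly (1) for $(x,y)$.

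This leaves (1)$\Rightarrow$(4), where the real work lies. Approximate unitary equivalence identifies (1) for $(x,y)$ with $y_1\in\overline{\conv(\uu_A(x_1))}$, while \ref{T5} applied in $B$ identifies (4) with $y_1\in\overline{\conv(\uu_B(x_1))}$; so everything reduces to the downward transfer
\[
y_1\in\overline{\conv(\uu_A(x_1))}\ \Longrightarrow\ y_1\in\overline{\conv(\uu_B(x_1))},
\]
that is, for a normal element of the subalgebra $B$, membership in the closed convex hull of the \emph{$A$-}orbit already happens inside $B$. I expect this to be the main obstacle. One cannot simply average over the $A$-unitaries $w_i$ in an approximation $y_1\approx\sum_i t_iw_i^*x_1w_i$ as in \ref{PPP}, since a quasi-trace need not be additive on non-commuting self-adjoints and so the maps $a\mapsto\sum_i t_iw_i^*aw_i$ need not preserve $\tau$; this is precisely why statements (2) and (3) of \ref{T5} are dropped from the present statement. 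Instead I would run the finite-dimensional machinery of \ref{Ln25}, \ref{Ln43} and \ref{Ln04} inside $B$: use tracial rank zero to trap $x_1,y_1$ in a finite-dimensional $F\subset B$ up to a $\tau_B$-small complement, use the $A$-unitary invariance of $\tau$ to push the mass estimates for $\mu_{\tau,X}$ and $\mu_{\tau,Y}$ into $F$, and thereby replace the $A$-conjugations by $B$-conjugations with a controlled error, placing $y_1$ in $\overline{\conv(\uu_B(x_1))}$.
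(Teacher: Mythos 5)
Your overall route is the same as the paper's: reduce to the AH-model $B$ of \ref{Trr0}, note that $B$ has tracial rank zero and (since $\Aff(QT(A))$ is one-dimensional) a unique tracial state, observe that statements (4) and (5) only see the measures $\mu_{\tau,X},\mu_{\tau,Y}$, on which the quasi-trace is an honest trace and which are shared by $x,x_1$ and $y,y_1$, and handle (4)$\Rightarrow$(1) by applying \ref{52cc}/\ref{T5} inside $B$ and then using $\uu_B(x_1)\subset\uu_A(x_1)$ together with approximate unitary equivalence. All of that matches the paper and is fine. Where you diverge is (1)$\Rightarrow$(4): the paper's proof simply writes that (1) gives $x_1\in\overline{\conv(\uu(y_1))}$ and then invokes \ref{T5} ``in $B$,'' i.e.\ it silently performs exactly the downward transfer from $\overline{\conv(\uu_A(x_1))}$ to $\overline{\conv(\uu_B(x_1))}$ that you isolate as the main obstacle. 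Your diagnosis is sharp: this step is not covered by \ref{PPP}, because the averaging maps $a\mapsto\sum_i t_iw_i^*aw_i$ need not preserve a quasi-trace, and indeed the transfer is logically equivalent to the implication (1)$\Rightarrow$(4) itself, so it cannot be dismissed as formal.

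The problem is that your proposed repair does not close this gap. The plan to ``use the $A$-unitary invariance of $\tau$ to push the mass estimates for $\mu_{\tau,X}$ and $\mu_{\tau,Y}$ into $F$'' collides with the very obstruction you name one sentence earlier: to feed \ref{Ln43} one needs bounds on $|\tau(\Phi(Q_k))-\tau(Q_k)|$ for $\Phi(a)=\sum_i t_iw_i^*aw_i$, and $\tau\bigl(\sum_i t_iw_i^*Q_kw_i\bigr)$ cannot be computed from the values $\tau(w_i^*Q_kw_i)=\tau(Q_k)$ when $\tau$ is only quasi-linear; no mechanism is offered for converting the $A$-conjugations into $B$-conjugations, so the finite-dimensional machinery of \ref{Ln25}--\ref{Ln04} has nothing to run on. The argument that actually lies behind this implication in the paper's framework (compare the proof of \ref{T51}) is to pass to the finite von Neumann algebra $\pi_\tau(A)''$ and use the Hiai--Nakamura majorization description of closed convex hulls of unitary orbits there (\cite{H1}, \cite{HN}), which turns (1) into a purely measure-theoretic relation between $\mu_{\tau,X}$ and $\mu_{\tau,Y}$ and hence into (4)/(5); note, however, that this presupposes the unique quasi-trace is a genuine trace so that $\pi_\tau$ exists, a point the paper also leaves unaddressed. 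As written, your proposal correctly locates the missing step but does not supply it.
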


\begin{proof}
Note that any quasi-trace restricted on a commutative \CA\, is a trace.
We also note that (4) and (5) are equivalent.
It remains to show (1) and (4) are equivalent.
We deploy the argument of the proof of \ref{Trr0}.
We keep all notation there.

If (1) holds, then $x_1\in \overline{\conv}({\cal U}(y_1)).$
Thus \ref{T5} can apply to $x_1$ (in place $x$)  and $y_1$ (in place of $y$) (in $B$).
Since $\sp(x)=\sp(x_1)$ and $\sp(y)=\sp(y_1),$ as functions
in $C(X)$ and $C(Y),$ $z_x=z_{x_1}$ and $z_y=z_{y_1},$ respectively.
Therefore \eqref{5f4-1} holds.
Since $x_1$ and $x$ are approximately unitarily equivalent, $\tau(f(x_1))=\tau(f(x))$ for all $f\in C(X),$
and $\tau(g(y))=\tau(g(y_1))$ for all $g\in C(Y).$ Hence
\eqref{5f4-2} also holds.

Suppose (4) holds.
Then there exists a sequence of unital completely positive linear maps
$\Psi_n: C(X)\to C(Y)$ such that
\beqq\label{5f4-1_n}
&&\lim_{n\to\infty}\|\Psi_n(z_x)-z_y\|=0\tand\\\label{5f4-2+1}
&&\lim_{n\to\infty}\Big|\tau(\Psi_n(f)(y))-\tau(f(x)) \Big|=0\tforal f\in C(X).
\eneqq

 The same reason given above shows  (4) holds for
$x_1$ (in place of $x$) and $y_1$ (in place of $y$) in $B.$
Therefore $y_1\in \overline{\conv({\cal U}(x_1))}.$ It follows that $y\in \overline{\conv({\cal U}(x))}.$
So (1) holds.

\end{proof}

\section{Approximately unitarily equivalence}
In what follows, if $x$ is a normal element in a  unital \CA,
we denote by $j_x: C({\rm sp}(x))\to A$ the injective \hm\, defined by
$j_x(f)=f(x)$ for all $f\in C({\rm sp}(x)).$

Let $A$ be a unital simple \CA\,  with tracial rank zero and
$x, \, y\in A$ be two normal elements.  By  \cite{LinCHD},
$x$ and $y$ are approximately unitarily equivalent, i.e., there exists a
sequence of unitaries $\{u_n\}$ of $A$ such that
\beq
\lim_{n\to\infty}\| u_n^*xu_n-y\|=0,
\eneq
if and only if $(j_x)_{*i}=(j_y)_{*i},$ $i=0,1,$ and,
$\tau\circ j_x=\tau\circ j_y$ for all $\tau\in T(A)$ (see also 5.6 of \cite{HL} for a slightly 
more general setting of this statement).

\begin{theorem}\label{T51}
Let $A$ be a unital  separable simple \CA\, with real rank zero, stable rank one,
weakly unperforated $K_0(A)$ and with a unique quasi-trace $\tau$ with $\tau(1_A)=1.$
and let
$x, y\in A$ be two normal elements.
Then the following are equivalent:

{\rm (1)} $x\in \overline{\conv({\cal U}(y))}$  and $y\in \overline{\conv({\cal U}(x))};$

{\rm (2)} ${\rm sp}(x)={\rm sp}(y)$ and  $\mu_x=\mu_y,$
where $\mu_x$ and $\mu_y$ are Borel probability measures
induced by $\tau\circ j_x$ and $\tau\circ j_y,$ respectively,

\end{theorem}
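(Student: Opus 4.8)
The plan is to obtain both implications from Corollary \ref{CC5end} (equivalently, Theorem \ref{T5} with the unique quasi-trace $\tau$ in place of the tracial state), which translates the condition ``lies in the closed convex hull of a unitary orbit'' into a statement about unital positive maps between the commutative algebras $C(X)$ and $C(Y)$, where $X={\rm sp}(x)$ and $Y={\rm sp}(y)$. Throughout, $z_x\in C(X)$ and $z_y\in C(Y)$ denote the identity functions, so that $\int_X f\,d\mu_x=\tau(f(x))$ and $\int_Y g\,d\mu_y=\tau(g(y))$.

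For $(2)\Rightarrow(1)$: if ${\rm sp}(x)={\rm sp}(y)=:X$ then $z_x=z_y$ as functions on $X$, and $\mu_x=\mu_y$ gives $\tau(f(x))=\tau(f(y))$ for all $f\in C(X)$. Hence the constant sequence $\Psi_n={\rm id}_{C(X)}$ satisfies condition $(4)$ of Theorem \ref{T5}, and by \ref{CC5end} this yields $y\in\overline{\conv(\uu(x))}$. Interchanging the roles of $x$ and $y$ (the hypotheses are symmetric) gives $x\in\overline{\conv(\uu(y))}$, so $(1)$ holds.

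For $(1)\Rightarrow(2)$ the core is the following claim: if $y\in\overline{\conv(\uu(x))}$, then $\int\phi\,d\mu_y\le\int\phi\,d\mu_x$ for every continuous convex $\phi\colon D\to\R$, where $D\subset\C$ is a fixed compact convex set with $X\cup Y\subset D$. By \ref{CC5end} there are unital positive maps $\Psi_n\colon C(X)\to C(Y)$ with $\|\Psi_n(z_x)-z_y\|\to 0$ and $\int_Y\Psi_n(f)\,d\mu_y\to\int_X f\,d\mu_x$ for all $f\in C(X)$. For $\eta\in Y$ the functional $f\mapsto\Psi_n(f)(\eta)$ is a state on $C(X)$, i.e. a probability measure $\nu^{(n)}_\eta$ on $X$ whose barycenter is $\int_X z_x\,d\nu^{(n)}_\eta=\Psi_n(z_x)(\eta)\in\conv(X)\subset D$; scalar Jensen applied to the convex function $\phi$ then gives $\phi\bigl(\Psi_n(z_x)(\eta)\bigr)\le\Psi_n(\phi|_X)(\eta)$ for all $\eta$. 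Integrating against $\mu_y$, the right-hand side tends to $\int_X\phi|_X\,d\mu_x$ by condition $(4)$ (with $f=\phi|_X$), while $\Psi_n(z_x)\to z_y$ uniformly together with the uniform continuity of $\phi$ on $D$ forces the left-hand side to $\int_Y\phi|_Y\,d\mu_y$; hence $\int\phi\,d\mu_y\le\int\phi\,d\mu_x$, proving the claim.

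Applying the claim to $y\in\overline{\conv(\uu(x))}$, and by symmetry to $x\in\overline{\conv(\uu(y))}$, yields $\int\phi\,d\mu_y\le\int\phi\,d\mu_x$ and $\int\phi\,d\mu_x\le\int\phi\,d\mu_y$ for every continuous convex $\phi$ on $D$, so $\int\phi\,d\mu_x=\int\phi\,d\mu_y$ for all such $\phi$. Since every $C^2$ function on the convex set $D$ is a difference of two convex functions (add a large multiple of $|{\cdot}|^2$) and $C^2(D)$ is uniformly dense in $C(D)$, it follows that $\mu_x=\mu_y$ as Borel measures on $\C$. Finally, because $A$ is simple the quasi-trace $\tau$ is faithful, whence $\supp\,\mu_x={\rm sp}(x)=X$ and $\supp\,\mu_y={\rm sp}(y)=Y$; equality of the two measures forces $X=Y$. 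Thus ${\rm sp}(x)={\rm sp}(y)$ and $\mu_x=\mu_y$, which is $(2)$. I expect the only genuinely substantive step to be invoking \ref{CC5end} to pass to the commutative picture; once there, the Jensen inequality, the difference-of-convex density, and the faithfulness of $\tau$ are all routine.
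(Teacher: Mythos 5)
Your proof is correct, and while the easy direction $(2)\Rightarrow(1)$ coincides with the paper's (both feed the identity map on $C(X)=C(Y)$ into condition (4) of Theorem \ref{T5} via Corollary \ref{CC5end}), your argument for $(1)\Rightarrow(2)$ takes a genuinely different route. The paper passes to the GNS representation $\pi_\tau$, views $\pi_\tau(A)''$ as a {II}$_1$ factor, and quotes Hiai's Theorem 2.2(vi) of \cite{H1} to obtain $\tau(\phi(x))=\tau(\phi(y))$ for continuous convex $\phi$, before invoking Proposition I.1.1 of \cite{Alf} to upgrade this to all of $C(\sp(x))$. You instead stay entirely inside the commutative picture already provided by \ref{CC5end}: the unital positive maps $\Psi_n\colon C(X)\to C(Y)$ give, for each $\eta\in Y$, a probability measure $\nu^{(n)}_\eta$ on $X$ with barycenter $\Psi_n(z_x)(\eta)$, and the barycentric Jensen inequality yields $\int\phi\,d\mu_y\le\int\phi\,d\mu_x$ for convex $\phi$ directly; the symmetric inequality and the standard density of differences of convex functions (which is exactly what the paper extracts from \cite{Alf}) then give $\mu_x=\mu_y$, and faithfulness of $\tau$ on hereditary subalgebras of the simple algebra $A$ gives full support, hence $\sp(x)=\sp(y)$. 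Your route buys two things: it avoids any von Neumann algebra machinery, and---more substantively---it never requires $\tau$ to be a linear trace. The theorem only assumes a quasi-trace, and forming $\pi_\tau$ and a {II}$_1$ factor as the paper does implicitly treats $\tau$ as linear (automatic only for exact algebras); your argument uses $\tau$ only on commutative subalgebras, where a quasi-trace is honestly a trace, which is exactly the level of generality \ref{CC5end} itself works at. The one point to make explicit is the faithfulness step at the end (that $\mu_\tau$ of every nonempty relatively open subset of the spectrum is positive); this is the same fact the paper uses without comment in the proof of Theorem \ref{51}, and it follows from simplicity together with real rank zero and weak unperforation.
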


\begin{proof}
Suppose that (2) holds.  Let $\phi: C({\rm sp}(x))\to C({\rm sp}(y))$
be defined by $\phi(f)=f(y)$ for all $f\in C({\rm sp}(y)).$ Then
(1) follows by \ref{CC5end}.

Suppose that (1) holds.
Let $\pi_\tau: A\to B(H_\tau)$ be the representation of $A$ given by the tracial state $\tau.$
Let $M=\pi_\tau(A)''.$ Then $M$ is a type {II}$_1$ factor.   Note that
since $A$ is simple, $\pi_\tau$ is faithful.
Note (1) implies that
\beq\nonumber
\pi_\tau(x)\in \overline{\conv({\cal U}(\pi_\tau(y)))}\andeqn \pi_\tau(y)\in \overline{\conv({\cal U}(\pi_\tau(x)))}.
\eneq
By  (vi) of Theorem 2.2 of \cite{H1},  $\tau(g(x))=\tau(g(y))$  for all
continuous convex function on $\R^2.$  It follows from Proposition I.1.1 of \cite{Alf}
that
\beq
\tau(f(x))=\tau(f(\pi_\tau(x)))=\tau(f(\pi_\tau(y)))=\tau(f(y))
\eneq
for all $f\in C({\rm sp}(x)).$  Consequently
${\rm sp}(x)={\rm sp}(\pi_\tau(x))={\rm sp}(\pi_\tau(y))={\rm sp}(y).$
Thus (2) holds.

\end{proof}

Let $A$ be a \CA.  Denote by $\rho_A: K_0(A)\to \Aff(T(A))$  the usual order preserving \hm.

\begin{corollary}\label{C52}
Let $A$ be a unital   separable simple \CA\, with  real rank zero, stable rank one, weakly unperforated
$K_0(A)$ and with a unique quasi-trace
$\tau$ such that $\tau(1_A)=1,$  and let
$x, y\in A$ be two normal elements.
Suppose that $K_1(A)=\{0\}$ and ${\rm ker}\rho_A=\{0\}.$
Then the following are equivalent:

{\rm (1)} $x\in\in \overline{\conv({\cal U}(y))}$  and $y\in \overline{\conv({\cal U}(x))};$

{\rm (2)} ${\rm sp}(x)={\rm sp}(y)$ and
$\tau(f(x))=\tau(f(y))$ for all $f\in C({\rm sp}(x)).$

{\rm (3)} $x$ and $y$ are approximately unitarily equivalent in $A.$

\end{corollary}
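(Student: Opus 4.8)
The plan is to establish the cycle $(3)\Rightarrow(1)\Leftrightarrow(2)\Rightarrow(3)$. Observe first that when ${\rm sp}(x)={\rm sp}(y)=:X$ the two conditions ``$\mu_x=\mu_y$'' and ``$\tau(f(x))=\tau(f(y))$ for all $f\in C(X)$'' coincide, by the Riesz representation theorem (both sides equal $\int_X f\,d\mu_x$ and $\int_X f\,d\mu_y$). Hence condition (2) here is literally condition (2) of Theorem \ref{T51}, and the equivalence $(1)\Leftrightarrow(2)$ is \emph{exactly} that theorem. The implication $(3)\Rightarrow(1)$ is immediate: if $u_n^*xu_n\to y$ in norm then $y\in\overline{{\cal U}(x)}\subset\overline{\conv({\cal U}(x))}$, and symmetrically $x\in\overline{\conv({\cal U}(y))}$.

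The work is in $(2)\Rightarrow(3)$, where I would invoke the characterisation of approximate unitary equivalence of normal elements recorded just before Theorem \ref{T51} (from \cite{LinCHD}, in the slightly more general form of 5.6 of \cite{HL} valid under the present regularity hypotheses): $x$ and $y$ are approximately unitarily equivalent if and only if $(j_x)_{*i}=(j_y)_{*i}$ for $i=0,1$ and $\tau\circ j_x=\tau\circ j_y$. Assume (2), so ${\rm sp}(x)={\rm sp}(y)=X$ and $\tau\circ j_x=\tau\circ j_y$; the last equality is one of the three required data. The $K_1$ datum is automatic, since $K_1(A)=\{0\}$ forces both $(j_x)_{*1}$ and $(j_y)_{*1}$ to be the zero homomorphism $K_1(C(X))\to\{0\}$. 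For the $K_0$ datum I would use ${\rm ker}\,\rho_A=\{0\}$: it suffices to check $(j_x)_{*0}[p]=(j_y)_{*0}[p]$ on classes of projections $p\in M_n(C(X))$, and by injectivity of $\rho_A$ this reduces to the equality of their images in $\Aff(T(A))$. But for such a $p$ the function $t\mapsto\Tr(p(t))$ is continuous and integer-valued, and
\[\rho_A\big((j_x)_{*0}[p]\big)(\tau)=\int_X\Tr\big(p(t)\big)\,d\mu_x(t),\]
where $\Tr$ is the non-normalised trace on $M_n(\C)$; this depends only on $\mu_x$. Since $\mu_x=\mu_y$ by (2), the two images agree, whence $(j_x)_{*0}=(j_y)_{*0}$. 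With all three invariants matched, the characterisation yields (3).

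I expect the only real obstacle to be the legitimacy of the cited characterisation under the hypotheses ``real rank zero, stable rank one, weakly unperforated $K_0(A)$, unique quasi-trace'' rather than the narrower ``tracial rank zero''; this is precisely where the more general statement 5.6 of \cite{HL} is needed, and where one must also recall (as in the proof of \ref{CC5end}) that the unique quasi-trace restricts to a genuine trace on the commutative algebras involved. Should one wish to avoid relying on that generalisation, I would instead transport the problem, exactly as in the proof of \ref{Trr0}, into the unital simple AH-subalgebra $B\subset A$ of tracial rank zero: there one finds normal $x_1,y_1\in B$ with ${\rm sp}(x_1)={\rm sp}(x)$, ${\rm sp}(y_1)={\rm sp}(y)$, the same $K$-theoretic and tracial data, and with $x_1$ approximately unitarily equivalent to $x$ and $y_1$ to $y$. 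One then runs the $K_0$/$K_1$/trace argument above inside $B$ (where \cite{LinCHD} applies directly), concludes that $x_1$ and $y_1$ are approximately unitarily equivalent, and transports this equivalence back to $A$.
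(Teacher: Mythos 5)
Your proposal is correct and follows essentially the same route as the paper: (3)$\Rightarrow$(1) is immediate, (1)$\Leftrightarrow$(2) is Theorem \ref{T51}, and (2)$\Rightarrow$(3) matches the $K_1$, $K_0$ and tracial data and invokes the classification of homomorphisms from $C({\rm sp}(x))$ (5.6 of \cite{HL}); your explicit computation $\rho_A\bigl((j_x)_{*0}[p]\bigr)(\tau)=\int_X\Tr(p(t))\,d\mu_x(t)$ just spells out the step the paper compresses into one sentence. The only detail the paper records that you elide is the passage from $(j_x)_{*i}=(j_y)_{*i}$ to $[j_x]=[j_y]$ in $KL(C({\rm sp}(x)),A)$ via freeness of $K_0(C({\rm sp}(x)))$ and the UCT, which is needed to apply 5.6 of \cite{HL} as stated; your fallback of transporting to the tracial-rank-zero subalgebra $B$ as in \ref{Trr0} is a reasonable safeguard but not what the paper does.
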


\begin{proof}
It is clear that (3) implies (1). Thus, by \ref{T51}, it remains to show
that (2) implies (3).   Assume that (2) holds. By the assumption, $\rho_A: K_0(A)\to \Aff(T(A))$
is an order preserving injective \hm. Therefore, (2), together with the assumption
that $K_1(A)=\{0\},$ implies that $(j_x)_{*i}=(j_y)_{*i},$ $i=0,1,$
where $j_x$ and $j_y$ are embedding from
$C({\rm sp}(x))$ to $A$ induced by $x$ and $y,$ respectively.
Since $K_0(C({\rm sp}(x)))=C({\rm sp}(x), \Z)$ is a free abelian group (see \cite{No}),
it follows from the Universal Coefficient Theorem that $[j_x]=[j_y]$ in $KL(C({\rm sp}(x)), A).$
Then,  by 5.6 of \cite{HL},
$j_x$ and $j_y$ are approximately unitarily equivalent, whence
(3) holds.

\end{proof}

Let $A$ be a unital simple \CA\, of tracial rank zero such that $K_1(A)\not=\{0\}.$
It follows from Theorem 6.11 of \cite{LinAH} that there are two
normal elements $x, y$ such that (2) of \ref{C52} holds but $(j_x)_{*1}\not=(j_y)_{*1}.$
Then $x$ and $y$ are not approximately unitarily equivalent.
However,  by \ref{C52}, $x\in \overline{\conv}(\uu(y))$ and $y\in \overline{\conv}(\uu(x)).$
Suppose that $K_1(A)=\{0\}$ but ${\rm ker}\rho_A\not=\{0\}.$   Suppose
$X\subset \C$ is  compact subset which is not connected. Then, by 6.11 of \cite{LinAH} again, there are
normal elements $x, y\in A$ with $\sp(x)=\sp(y)=X$ such that (2) of \ref{C52} holds but
$(j_x)_{*0}\not=(j_y)_{*0}.$ Then $x$ and $y$ are not approximately unitarily equivalent.
However, by \ref{C52} again,   $x\in \overline{\conv}(\uu(y))$ and $y\in \overline{\conv}(\uu(x)).$
Nevertheless, we have the following:

\begin{corollary}\label{C53}
Let $A$ be a unital
separable simple AF-algebra with a
unique tracial state  and let
$x,\, y\in A$ be two normal elements with connected spectrum.

Then the following are equivalent:

{\rm (1)} $x\in\overline{\conv({\cal U}(y))}$  and $y\in \overline{\conv({\cal U}(x))};$

{\rm (2)} ${\rm sp}(x)={\rm sp}(y)$ and
$\tau(f(x))=\tau(f(y))$ for all $f\in C({\rm sp}(x)).$

{\rm (3)} $x$ and $y$ are approximately unitarily equivalent in $A.$

\end{corollary}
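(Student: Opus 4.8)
The plan is to show that a unital separable simple AF-algebra $A$ with a unique tracial state $\tau$ meets all the standing hypotheses of \ref{T51} and \ref{C52}, and then to use the connectedness of the spectrum to dispose of the single hypothesis of \ref{C52}, namely $\ker\rho_A=\{0\}$, which may genuinely fail for such $A$ (its $K_0$ may carry nonzero infinitesimals). First I would record the structural facts: being AF, $A$ has real rank zero, stable rank one and tracial rank zero, $K_1(A)=\{0\}$, and $K_0(A)$ is a dimension group and hence weakly unperforated; and since $A$ is nuclear its unique quasi-trace coincides with $\tau$. In particular the hypotheses of \ref{T51} are satisfied.

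The implications $(3)\Rightarrow(1)$ and $(1)\Rightarrow(2)$ are then cheap. For $(3)\Rightarrow(1)$, if $u_n^*xu_n\to y$ then $y\in\overline{{\cal U}(x)}\subseteq\overline{\conv({\cal U}(x))}$, and symmetrically $x\in\overline{\conv({\cal U}(y))}$. For $(1)\Rightarrow(2)$ I would simply invoke \ref{T51}, which under the hypotheses just verified gives ${\rm sp}(x)={\rm sp}(y)$ together with $\mu_x=\mu_y$; the latter is the same as $\tau(f(x))=\tau(f(y))$ for all $f\in C({\rm sp}(x))$, i.e.\ condition (2).

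The substantive step is $(2)\Rightarrow(3)$, and here the connectedness is used. Put $X={\rm sp}(x)={\rm sp}(y)$, a connected compact subset of $\C=\R^2$. In the proof of \ref{C52} the hypothesis $\ker\rho_A=\{0\}$ entered only to convert the equality of trace data into the equality $(j_x)_{*0}=(j_y)_{*0}$; for connected $X$ this equality of $K_0$-maps holds for free. Indeed, viewing $X\subset S^2$, Alexander duality gives $\check H^2(X;\Z)=0$, while $\check H^0(X;\Z)=\Z$ by connectedness; since $\dim X\le 2$ the Atiyah--Hirzebruch spectral sequence collapses and $K_0(C(X))\cong K^0(X)\cong\check H^0(X;\Z)\cong\Z$, generated by $[1_{C(X)}]$. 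As $j_x$ and $j_y$ are unital they both send this generator to $[1_A]$, so $(j_x)_{*0}=(j_y)_{*0}$; and $(j_x)_{*1}=(j_y)_{*1}=0$ because $K_1(A)=\{0\}$. With $K_0(C(X))$ free abelian, the Universal Coefficient Theorem then yields $[j_x]=[j_y]$ in $KL(C(X),A)$, exactly as in \ref{C52}. Combined with $\tau\circ j_x=\tau\circ j_y$ from (2), an application of 5.6 of \cite{HL} shows that $j_x$ and $j_y$, and hence $x$ and $y$, are approximately unitarily equivalent, which is (3). This closes the cycle $(1)\Rightarrow(2)\Rightarrow(3)\Rightarrow(1)$.

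\textbf{Main obstacle.} The only point requiring care is the topological computation $K_0(C(X))\cong\Z$ for connected planar $X$, i.e.\ that connectedness of the spectrum forces $(j_x)_{*0}=(j_y)_{*0}$ and thereby removes the reliance of \ref{C52} on $\ker\rho_A=\{0\}$. Once that identification is in place, everything else is a direct assembly of \ref{T51}, the Universal Coefficient Theorem, and 5.6 of \cite{HL}, following the template of \ref{C52} verbatim.
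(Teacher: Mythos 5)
Your proof is correct and follows essentially the same route as the paper: (3)$\Rightarrow$(1) is immediate, (1)$\Rightarrow$(2) is Theorem \ref{T51}, and for (2)$\Rightarrow$(3) connectedness of the spectrum forces $(j_x)_{*0}=(j_y)_{*0}$ (since $K_0(C(X))\cong C(X,\Z)\cong \Z\cdot[1]$ for a connected compact planar $X$), after which the free-abelian-$K_*$/UCT argument giving $[j_x]=[j_y]$ in $KL$ and the uniqueness theorem (you cite 5.6 of \cite{HL}, the paper cites 3.4 of \cite{LinCHD}; both serve the same purpose) finish exactly as in \ref{C52}. Your explicit computation of $K_0(C(X))$ simply spells out what the paper's one-line claim "connectedness implies $(j_x)_{*i}=(j_y)_{*i}$" leaves implicit.
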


\begin{proof}
Again, it remains to show  (2) implies (3).
Since both ${\rm sp}(x)$ and ${\rm sp}(y)$ are connected,
(2) implies that $(j_x)_{*i}=(j_y)_{*i},$ $i=0,1.$  Then, by 3.4  of \cite{LinCHD},  as
in the proof of \ref{C52}, (3) holds.

\end{proof}

\end{document}